	\numberwithin{equation}{section}
	\newtheorem{theorem}{Theorem}[section]
	\newtheorem{corollary}[theorem]{Corollary}
	\newtheorem{lemma}[theorem]{Lemma}
	\newtheorem{proposition}[theorem]{Proposition}
	\theoremstyle{remark}
	\newtheorem{rem}[theorem]{Remark}
	\newtheorem{exam}[theorem]{Example}
	\newtheorem{definition}[theorem]{Definition}
\begin{document}
		\title[$G$-Bott-Samelson-Demazure-Hansen variety]{Line bundles on $G$-Bott-Samelson-Demazure-Hansen varieties}
		
		\author[S. Bhaumik]{Saurav Bhaumik}
		
		\address{Department of Mathematics\\ IIT Bombay\\ Mumbai 400076}
		
		\email{saurav@math.iitb.ac.in}
		
		\author[P. Saha]{Pinakinath Saha}
		
		\address{Department of Mathematics\\ IIT Bombay\\ Mumbai 400076}
		
		\email{psaha@math.iitb.ac.in}
		
		\subjclass[2010]{14M15, 14L40, 14F25}
		
		\keywords{Bott-Samelson-Demazure-Hansen variety, flag variety, anti-canonical line bundle, Fano, weak-Fano.}
		
		\begin{abstract}
			Let $G$ be a semi-simple simply connected algebraic group over an algebraically closed field $k$ of arbitrary characteristic. Let $B$ be a Borel subgroup of $G$ containing a maximal torus $T$ of $G.$ Let $W$ be the Weyl group of $G$ with respect to $T$. For an arbitrary sequence $w=(s_{i_{1}},s_{i_{2}},\ldots, s_{i_{r}})$ of simple reflections in $W,$ let $Z_{w}$ be the Bott-Samelson-Demazure-Hansen variety (BSDH-variety for short) corresponding to $w.$  Let $\widetilde{Z_{w}}:=G\times^{B}Z_{w}$ denote the fibre bundle over $G/B$ with the fibre over $B/B$ is $Z_{w}.$ In this article, we give necessary and sufficient conditions for the varieties $Z_{w}$ and $\widetilde{Z_{w}}$ to be Fano (weak-Fano). We show that a line bundle on $Z_{w}$ is globally generated if and only if it is nef. We show that Picard group $\text{Pic}(\widetilde{Z_{w}})$ is free abelian and we construct a $\mathcal{O}(1)$-basis. We characterize the nef, globally generated, ample and very ample line bundles on $\widetilde{Z_{w}}$ in terms of the $\mathcal{O}(1)$-basis. 	
		\end{abstract}	
		\maketitle 
		
		\tableofcontents
		\section{Introduction}
		Let $G$ be a semi-simple simply connected algebraic group over an algebraically closed filed $k$ of arbitrary characteristic. Let $B$ be a Borel subgroup of $G$ containing a maximal torus $T$ of $G.$ Let $W\,=\,N_{G}(T)/T$ denote the Weyl group of $G$ with respect to $T,$ where $N_{G}(T)$ denotes the normalizer of $T$ in $G.$ Let
		$w\,=\,(s_{i_{1}},\,\ldots ,\,s_{i_{r}})$ be a sequence (not necessarily reduced) of simple reflections in $W.$ Let $Z_{w}$ be the Bott-Samelson-Demazure-Hansen variety (BSDH-variety for short) corresponding to $w.$ For any $w\,\in\, W,$ let $X(w):=\overline{BwB}/B$ denote the Schubert variety in $G/B$ corresponding to $w.$ For a sequence of simple reflections $w=(s_{i_{1}},s_{i_{2}},\ldots, s_{i_{r}})$ such that $w=s_{i_{1}}\cdots s_{i_{r}}\in W$ is a reduced expression, $Z_{w}$ is a natural desingularization of $X(w)$. It was first introduced by Bott and Samelson in a differential geometric and topological context (cf. \cite{BS}). Demazure in \cite{De} and Hansen in \cite{Han}
		independently adapted the construction in algebro-geometric situation, which explains the name.
		
		There is natural action of $B$ on $Z_{w}$ given by the multiplication on the left. Let $\widetilde{Z_{w}}:=G\times^{B} Z_{w}$ be the fibre bundle associated to the principal $B$-bundle $G\longrightarrow G/B.$ Then $\widetilde{Z_{w}}$ is a smooth projective variety. For a sequence of simple reflections $w=(s_{i_{1}},s_{i_{2}},\ldots, s_{i_{r}})$ such that $w=s_{i_{1}}\cdots s_{i_{r}}\in W$ is a reduced expression, $\widetilde{Z_{w}}$ is a natural desingularization of $G\times^{B}X(w).$ We call it $G$-Bott-Samelson-Demazure-Hansen variety ($G$-BSDH-variety for short) corresponding to $w.$
		In \cite{MR88} Mehta and Ramanathan calculated the description of the anti-canonical line bundle of $\widetilde{Z_{w}}$ and from this description of the anti-canonical line bundle  of $\widetilde{Z_{w}},$ they proved that $\widetilde{Z_{w}}$ is Frobenius-split.

		In \cite{LLM}, Lakshmibai, Littelmann and Magyar studied the Standard Monomial Theory for globally generated (respectively, ample) line bundles on $Z_{w}.$ As a consequence, they proved the cohomology vanishing theorems for globally generated (and also ample) line bundles. 
		In \cite{LT}, Lauritzen and Thompsen characterized the globally generated, ample and very ample line bundles on $Z_{w}.$ 
		
		A smooth projective variety $X$ over $k$ is called {\it Fano} (repectively, {\it weak-Fano}) if its anti-canonical line bundle $\omega_{X}^{-1}$ is ample (respectively, nef and big).
		
		In this article, we prove the following results.	
		\begin{theorem}\label{thm4.16}
			Let $w=(s_{i_{1}},s_{i_{2}},\ldots, s_{i_{r}})$ be an arbitrary sequence of simple reflections. Then a line bundle $\mathcal{L}$ on $Z_{w}$ is  globally generated if and only if it is nef.
		\end{theorem}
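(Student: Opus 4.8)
The plan: one implication is formal. If $\mathcal L$ is globally generated it is the pullback of $\mathcal O(1)$ under the morphism to projective space that it defines, hence nef. So all the content is the converse, that nef implies globally generated on $Z_w$, and I would prove it by induction on the length $r$ of $w$, using the iterated $\mathbb P^{1}$-bundle structure.

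For $r\le 1$, $Z_w$ is a point or $\mathbb P^{1}$ and the claim is obvious. For $r\ge 2$, set $w'=(s_{i_1},\dots,s_{i_{r-1}})$ and let $\pi\colon Z_w\to Z_{w'}$ be the $\mathbb P^{1}$-bundle forgetting the last factor. It carries the canonical section $\sigma\colon Z_{w'}\hookrightarrow Z_w$ whose image $Y$ is the divisor on which the last factor is trivial; under the identification $Z_w\cong Z_{w'}\times_{G/P_{i_r}}G/B$, $Y$ is the graph of the natural morphism $f_{w'}\colon Z_{w'}\to G/B$. Let $\mathcal L$ be nef. Restricting to a fibre of $\pi$ gives $\mathcal O_{\mathbb P^{1}}(m)$, so the fibre degree $m$ is $\ge 0$; hence $R^{1}\pi_{*}\mathcal L=0$, the sheaf $\pi_{*}\mathcal L$ is locally free and compatible with base change, and the relative evaluation $\pi^{*}\pi_{*}\mathcal L\to\mathcal L$ is surjective. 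Tracking global generation through these facts yields the reduction
\[
\mathcal L\ \text{globally generated on}\ Z_w\ \Longleftrightarrow\ \pi_{*}\mathcal L\ \text{globally generated on}\ Z_{w'}.
\]
Thus the inductive step becomes: if $\mathcal L$ is nef on $Z_w$ then the rank-$(m+1)$ bundle $\pi_{*}\mathcal L$ on $Z_{w'}$ is globally generated.

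To prove this, write $\mathcal L=\pi^{*}\mathcal M\otimes\xi^{m}$, where $\xi$ is the relative tautological line bundle of $Z_w=\mathbb P(\mathcal V)$, $\mathcal V:=\pi_{*}\xi$; then $\pi_{*}\mathcal L=\mathcal M\otimes\mathrm{Sym}^{m}\mathcal V$. The section $Y$ corresponds to a quotient $\mathcal V\twoheadrightarrow\mathcal Q$ with kernel the line bundle $\mathcal S$; one computes $\mathcal L|_{Y}=\mathcal M\otimes\mathcal Q^{m}$, and the normal bundle of $Y$ is $\mathcal S^{-1}\mathcal Q=f_{w'}^{*}\mathcal L_{\alpha_{i_r}}$, the pullback of the line bundle on $G/B$ attached to the simple root $\alpha_{i_r}$. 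By the inductive hypothesis applied to line bundles on $Z_{w'}$, the nef bundle $\mathcal L|_{Y}$ is globally generated; and globally generated line bundles on $Z_{w'}$ have vanishing higher cohomology, by the Lakshmibai--Littelmann--Magyar theorem \cite{LLM}. Combining these two inputs with the relative surjection $\mathrm{Sym}^{m}\mathcal V\twoheadrightarrow\mathcal Q^{m}$ and the filtration of $\mathrm{Sym}^{m}\mathcal V$ by powers of the sub-line-bundle $\mathcal S$, one deduces that $\mathcal M\otimes\mathrm{Sym}^{m}\mathcal V=\pi_{*}\mathcal L$ is globally generated, completing the induction.

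The main obstacle is precisely this last deduction. The successive quotients of the filtration of $\pi_{*}\mathcal L$ are the line bundles $\mathcal L|_{Y}\otimes(f_{w'}^{*}\mathcal L_{\alpha_{i_r}})^{-a}$ for $0\le a\le m$, and these need \emph{not} be globally generated, so one cannot argue graded piece by graded piece; it is the cohomology vanishing --- a Frobenius-splitting phenomenon peculiar to BSDH varieties, and not shared by a general iterated $\mathbb P^{1}$-bundle, for which ``nef'' does not force ``globally generated'' --- that must do the work of splicing the extensions. An alternative route that bypasses the pushforward is to combine the explicit description of the cone of globally generated line bundles in $\mathrm{Pic}(Z_w)\cong\mathbb Z^{r}$ due to Lauritzen--Thomsen \cite{LT} with a direct computation of the cone of curves of $Z_w$ --- generated by rational curves built from the tower projections and the maps to the partial flag varieties $G/P_{i_j}$ --- and to verify that the nef cone so obtained coincides with the Lauritzen--Thomsen cone; the delicate point there, again powered by the same positivity, is exhibiting enough effective curves.
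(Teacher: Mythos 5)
Your forward implication is fine, and the reduction of the converse to an induction along the tower of $\mathbb{P}^{1}$-bundles is a reasonable frame, but the proof has a genuine gap exactly where you flag it, and flagging it does not close it. The inductive step you need is that the rank-$(m+1)$ bundle $\pi_{*}\mathcal{L}=\mathcal{M}\otimes\mathrm{Sym}^{m}\mathcal{V}$ is globally generated on $Z_{w'}$, and the inputs you assemble do not yield this. The filtration of $\mathcal{M}\otimes\mathrm{Sym}^{m}\mathcal{V}$ coming from $0\to\mathcal{S}\to\mathcal{V}\to\mathcal{Q}\to 0$ has graded pieces $\mathcal{L}|_{Y}\otimes\bigl(f_{w'}^{*}\mathcal{L}(\alpha_{i_{r}})\bigr)^{-a}$ for $0\le a\le m$, and, as you yourself note, for $a\ge 1$ these need not be globally generated (they need not even have sections). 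Knowing that $\mathcal{L}|_{Y}$ is globally generated and that \emph{it} has vanishing higher cohomology (the vanishing in \cite{LLM} applies to globally generated and ample bundles) gives no control on $H^{1}$ of the negatively twisted pieces, which is what a splicing argument through the extensions would require; so the ``deduction'' at the end of your third paragraph is not an omitted routine verification but the entire content of the theorem, restated in a form at least as hard as the original (global generation of $\pi_{*}\mathcal{L}$ amounts to surjectivity of $H^{0}(Z_{w},\mathcal{L})$ onto the sections of $\mathcal{L}$ on every fibre, which is strictly stronger than global generation of $\mathcal{L}$). The alternative route you sketch, matching the nef cone against the Lauritzen--Thomsen cone via the cone of curves, is likewise only gestured at: producing the effective curves that detect every coordinate, in particular the first one when $s_{i_{1}}$ repeats, is again the crux and is left open. (A smaller point: the asserted equivalence ``$\mathcal{L}$ globally generated iff $\pi_{*}\mathcal{L}$ globally generated'' is only needed, and only clear, in the direction from $\pi_{*}\mathcal{L}$ to $\mathcal{L}$.)

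For comparison, the paper's proof avoids pushforwards entirely and is the natural fix. Write $\mathcal{L}=\mathcal{O}_{Z_{w}}(m_{1},\ldots,m_{r})$ in the $\mathcal{O}(1)$-basis; by \cite[Corollary 3.3]{LT}, global generation is equivalent to $m_{1},\ldots,m_{r}\ge 0$, so it suffices to show nefness forces these inequalities. Restricting to the $B$-invariant divisor $Z_{w(1)}$ (itself a BSDH variety of dimension $r-1$) and using the restriction formula \cite[Lemma 2.1]{LT} gives $\sigma_{w,1}^{*}\mathcal{L}=\mathcal{O}_{Z_{w(1)}}(m_{2},\ldots,m_{r})$, which is nef, hence globally generated by induction, hence $m_{2},\ldots,m_{r}\ge 0$. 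For $m_{1}$ one restricts to $Z_{w(2)}$: the extra twist there is a pullback of $\mathcal{O}_{\mathbb{P}^{1}}(m_{2}\langle\varpi_{i_{2}},\alpha_{i_{1}}^{\vee}\rangle)$ from $Z_{w(2)[r-2]}=P_{\alpha_{i_{1}}}/B$, which is trivial when $\alpha_{i_{1}}\neq\alpha_{i_{2}}$, so induction applies again; when $\alpha_{i_{1}}=\alpha_{i_{2}}$ one uses $Z_{w}\simeq\mathbb{P}^{1}\times Z_{w(1)}$ and reads off $m_{1}\ge 0$ directly. If you want to rescue your tower induction you would in effect have to reprove this coordinate statement anyway, so the divisor-restriction argument is both shorter and complete.
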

		
		To proceed further we introduce the following notation for be an arbitrary sequence $w=(s_{i_{1}},s_{i_{2}},\ldots, s_{i_{r}})$ of simple reflections. For  $1\le j\le r,$  let 
		\begin{equation*}
			m_{jj}=\langle \alpha_{i_{j}},\alpha_{i_{j}}^{\vee}\rangle =2
		\end{equation*}
		and 
		\begin{equation*}
			m_{jk}=\langle \alpha_{i_{j}}-\sum_{l=k+1}^{j}m_{jl}\varpi_{i_{l}},\alpha_{i_{k}}^{\vee}\rangle 
		\end{equation*} for all $1\le k\le j-1.$ 
		Note that using the previous recursive formula, $m_{jk}$ (~$1\le k\le j\le r$~) are well determined. 
		Then we prove 
		\begin{theorem}\label{cor4.18}
		$Z_{w}$ is  Fano \rm{(weak-Fano)} if and only if  $\sum_{l=j}^{r}m_{lj} >0$ ($\sum_{l=j}^{r}m_{lj} \ge 0$) for all $1\le j\le r-1.$  
		\end{theorem}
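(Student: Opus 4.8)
The plan is to reduce the statement to two inputs: an explicit formula for $\omega_{Z_w}^{-1}$ in a natural $\mathbb{Z}$-basis of $\text{Pic}(Z_w)$, and the characterization of ample and nef line bundles in that basis. For $1\le j\le r$ let $Z_{w[j]}$ be the Bott--Samelson variety of the truncated sequence $(s_{i_1},\dots,s_{i_j})$, let $\pi_j\colon Z_{w[j]}\to Z_{w[j-1]}$ be the canonical $\mathbb{P}^1$-bundle (the last-factor projection), let $D_j\subset Z_w$ be the divisor ``identity in the $j$-th slot'' (the pull-back of the section of $\pi_j$), and let $\mathcal{L}_j$ be the pull-back to $Z_w$ of $\mathcal{L}_{G/B}(\varpi_{i_j})$ along the partial product map $[g_1,\dots,g_r]\mapsto g_1\cdots g_j B$. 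Using the tower of $\mathbb{P}^1$-bundles one gets $\text{Pic}(Z_w)=\bigoplus_{j=1}^{r}\mathbb{Z}\,\mathcal{L}_j$, and then, inducting on $r$ by restricting a line bundle to the ``head'' $D_r\cong Z_{w[r-1]}$ and the ``tail'' $D_1\cong Z_{(s_{i_2},\dots,s_{i_r})}$ (again Bott--Samelson varieties, on which each $\mathcal{L}_j$ restricts to the corresponding partial-product bundle), one obtains the dictionary: $\mathcal{L}=\bigotimes_j\mathcal{L}_j^{\otimes a_j}$ is globally generated --- equivalently, by \Cref{thm4.16}, nef --- if and only if $a_j\ge 0$ for all $j$, and $\mathcal{L}$ is ample if and only if $a_j>0$ for all $j$. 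This is exactly the description of the globally generated and ample cones in \cite{LT}, so one may simply invoke it.

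Next I would compute $\omega_{Z_w}^{-1}$ in the basis $\{\mathcal{L}_j\}$. From $\omega_{Z_{w[j]}}=\pi_j^*\omega_{Z_{w[j-1]}}\otimes\omega_{\pi_j}$, the relation $\omega_{\pi_j}=\mathcal{O}_{Z_{w[j]}}(-2D_j)\otimes\pi_j^*N_{D_j}$ for a $\mathbb{P}^1$-bundle with section, and the identification $N_{D_j/Z_{w[j]}}=\mathcal{L}_{Z_{w[j-1]}}(\alpha_{i_j})$ (this is the Mehta--Ramanathan computation of \cite{MR88}, performed on $Z_w$ rather than $\widetilde{Z_w}$), one gets
\[
\omega_{Z_w}^{-1}\;=\;\mathcal{O}_{Z_w}\Big(2\sum_{j=1}^{r}D_j\Big)\otimes\bigotimes_{j=2}^{r}\big(\text{pull-back to }Z_w\text{ of }\mathcal{L}_{Z_{w[j-1]}}(-\alpha_{i_j})\big).
\]
Then one rewrites both $\mathcal{O}_{Z_w}(D_j)$ and each pull-back $\mathcal{L}_{Z_{w[j-1]}}(\mu)$ in terms of $\mathcal{L}_1,\dots,\mathcal{L}_j$: pulling $\mathcal{L}_{Z_{w[j]}}(\mu)$ back across $\pi_j$ changes the $\mathcal{L}_j$-component by $\langle\mu,\alpha_{i_j}^\vee\rangle$ and otherwise reduces the problem to stage $j-1$, so these expansions are controlled by precisely the triangular recursion defining the $m_{jk}$ (with $m_{jj}=2$ recording the term $\mathcal{O}(2D_j)$ and the off-diagonal $m_{jk}$ recording the successive $\mathbb{P}^1$-bundle twists coming from $-\alpha_{i_j}$). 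Collecting coefficients yields
\[
\omega_{Z_w}^{-1}\;=\;\bigotimes_{j=1}^{r}\mathcal{L}_j^{\otimes c_j},\qquad c_j=\sum_{l=j}^{r}m_{lj},
\]
which I would double-check on $(s_i)$, $(s_1,s_1)$ and $(s_i,s_j)$ to fix signs and conventions.

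Now combine the two. The variety $Z_w$ is Fano iff $\omega_{Z_w}^{-1}$ is ample iff $c_j>0$ for all $j$; but $c_r=m_{rr}=2>0$ automatically, so this is exactly $\sum_{l=j}^{r}m_{lj}>0$ for $1\le j\le r-1$. Likewise $Z_w$ is weak-Fano iff $\omega_{Z_w}^{-1}$ is nef and big; nef-ness is $c_j\ge 0$ for all $j$, i.e. (again since $c_r=2$) $\sum_{l=j}^{r}m_{lj}\ge 0$ for $1\le j\le r-1$, so it remains only to see that $\omega_{Z_w}^{-1}$ is automatically big once it is nef. For this I would restrict to $D_1\cong Z_{(s_{i_2},\dots,s_{i_r})}$: one checks $\omega_{Z_w}^{-1}|_{D_1}$ has $\mathcal{L}$-coefficients $c_2,\dots,c_r$, hence equals the anti-canonical bundle of the smaller Bott--Samelson variety $D_1$, which is big by induction on $r$; since moreover $\omega_{Z_w}^{-1}$ is $\pi_r$-ample (degree $2$ on the fibres) so non-degenerate in the fibre direction, one concludes $(\omega_{Z_w}^{-1})^{\dim Z_w}>0$, whence $\omega_{Z_w}^{-1}$ is big. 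The step I expect to be most delicate is exactly this bigness verification (equivalently, checking nef $\Rightarrow$ big here, e.g. by establishing positivity of the top self-intersection directly on the tower), closely followed by the careful bookkeeping identifying the exponents $c_j$ with $\sum_{l=j}^{r}m_{lj}$; everything else is formal once the ample/nef dictionary in the basis $\{\mathcal{L}_j\}$ is available.
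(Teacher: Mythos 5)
Most of your plan coincides with the paper's argument: the expansion of $\omega_{Z_w}^{-1}$ in the $\mathcal{O}(1)$-basis with coefficients $c_j=\sum_{l=j}^{r}m_{lj}$ (the paper's \cref{lem4.13}, \cref{cor4.12}, \cref{prop4.14}, obtained from the relative tangent bundles of the tower rather than from sections and normal bundles, but with the same triangular recursion), the Lauritzen--Thomsen ample/globally generated criterion, and \cref{thm4.16} to convert nef into globally generated. The Fano case and the ``nef part'' of the weak-Fano case are therefore fine.

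The genuine gap is exactly the step you flagged: the claim that $\omega_{Z_w}^{-1}$ nef implies big. Your sketch --- $\omega_{Z_w}^{-1}|_{D_1}\cong\omega_{D_1}^{-1}$ is big by induction, and $\omega_{Z_w}^{-1}$ has degree $2$ on the fibres of the last $\mathbb{P}^1$-fibration, hence $(\omega_{Z_w}^{-1})^{r}>0$ --- is not a valid inference. Writing $L=\sum_j c_j\mathcal{L}_j$ with all $c_j\ge 0$ and $\mathcal{O}(D_1)=\mathcal{L}_1$, the induction only gives $L^{r-1}\cdot\mathcal{L}_1=(L|_{D_1})^{r-1}>0$, and then $L^{r}=\sum_j c_j\,L^{r-1}\cdot\mathcal{L}_j\ge c_1\,L^{r-1}\cdot\mathcal{L}_1$; in the boundary case $c_1=\sum_{l=1}^{r}m_{l1}=0$ (which does occur, e.g.\ $w=(s_1,s_2)$ in type $B_2$) this term vanishes and nothing in your argument produces positivity, since $c_r=2$ only pairs with $L^{r-1}\cdot\mathcal{L}_r$, which is merely $\ge 0$. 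Indeed the general principle you invoke is false: on $\mathbb{P}^1\times\mathbb{P}^1\times\mathbb{P}^1$ the nef bundle $\mathcal{O}(0,1,1)$ has big restriction to a fibre of the first projection and positive degree on the fibres of the third projection, yet its top self-intersection is zero. The paper closes this gap differently, by proving that $\omega_{Z_w}^{-1}$ is big \emph{unconditionally} (\cref{prop4.15}): by Mehta--Ramanathan, $\omega_{Z_w}^{-1}\cong\mathcal{O}_{Z_w}(\partial Z_w)\otimes\mathcal{O}_{Z_w}(\rho)$, the complement $Z_w\setminus\partial Z_w\cong\prod_j U_{\alpha_{i_j}}$ is affine, so $\mathcal{O}_{Z_w}(\partial Z_w)$ is big by Brion's criterion (\cref{lem3.3}), and tensoring with the effective bundle $\mathcal{O}_{Z_w}(\rho)$ preserves bigness; then weak-Fano is equivalent to nefness of $\omega_{Z_w}^{-1}$ (\cref{cor3.6}), and the coefficient criterion follows. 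Substituting this bigness argument for your inductive one would complete your proof.
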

		
		B. N. Chary \cite{Cha} characterized Fano (weak-Fano) BSDH-varieties for reduced expressions. Our \cref{cor4.18} provides a generalization  and different characterization of Fano (weak-Fano) BSDH-varieties.

		In \cref{sec4} we construct a $\mathcal{O}(1)$-basis for the line bundles on $\widetilde{Z_{w}}.$ The variety $\widetilde{Z_{w}}$ admits a sequence of $\mathbb{P}^{1}$-fibrations to $G/B.$ The $\mathcal{O}(1)$-bundles for the successive $\mathbb{P}^{1}$-fibrations together with the generators of the ample cone of the Picard group ${\rm Pic}(G/B)$ form the generators of the ample cone of ${\rm Pic}(\widetilde{Z_{w}}).$ They form the $\mathcal{O}(1)$-basis. 
		
		\begin{lemma} \label{lem4.1}
		Any line bundle on $\widetilde{Z_{w}}$ is of the form  $$\mathcal{O}_{\widetilde{Z_{w}}}(m_{1},\ldots, m_{r})\otimes \widetilde{\pi_{w[r]}}^{*}\mathcal{L}(\lambda)$$ for a unique  $(m_{1},\ldots, m_{r})\in \mathbb{Z}^{r}$ and a unique $\lambda\in X(B).$
		\end{lemma}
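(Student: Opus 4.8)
The plan is to exploit the tower of $\mathbb{P}^1$-fibrations defining $\widetilde{Z_w}$ and proceed by induction on the length $r$ of the sequence. Recall that $Z_w$ is built as an iterated $\mathbb{P}^1$-bundle: setting $w[j] = (s_{i_1},\dots,s_{i_j})$, there is a projection $Z_{w[j]} \to Z_{w[j-1]}$ which is a $\mathbb{P}^1$-bundle (the pullback of $P_{i_j}/B \to \mathrm{pt}$), and applying $G\times^B(-)$ gives a $\mathbb{P}^1$-bundle $\widetilde{\pi}_j\colon \widetilde{Z_{w[j]}} \to \widetilde{Z_{w[j-1]}}$, with $\widetilde{Z_{w[0]}} = G/B$. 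The base case $r=0$ is the classical fact that $\mathrm{Pic}(G/B) \cong X(B)$ (here $B$, equivalently $T$, character group) via $\lambda \mapsto \mathcal{L}(\lambda)$, which holds since $G$ is semisimple simply connected.

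For the inductive step, suppose the claim holds for $\widetilde{Z_{w[r-1]}}$. Since $\widetilde{\pi}_r\colon \widetilde{Z_{w[r]}} \to \widetilde{Z_{w[r-1]}}$ is a $\mathbb{P}^1$-bundle, i.e.\ $\widetilde{Z_{w[r]}} = \mathbb{P}(\mathcal{E})$ for a rank-$2$ bundle $\mathcal{E}$ on $\widetilde{Z_{w[r-1]}}$, the standard structure theorem for the Picard group of a projective bundle gives a split exact sequence
\begin{equation*}
0 \longrightarrow \mathrm{Pic}(\widetilde{Z_{w[r-1]}}) \xrightarrow{\ \widetilde{\pi}_r^*\ } \mathrm{Pic}(\widetilde{Z_{w[r]}}) \longrightarrow \mathbb{Z} \longrightarrow 0,
\end{equation*}
where the quotient $\mathbb{Z}$ is generated by the relative $\mathcal{O}(1)$, which here is the line bundle $\mathcal{O}_{\widetilde{Z_{w[r]}}}(0,\dots,0,1)$ coming from the last fibration. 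Thus every line bundle on $\widetilde{Z_{w[r]}}$ is uniquely $\mathcal{O}_{\widetilde{Z_{w[r]}}}(0,\dots,0,m_r) \otimes \widetilde{\pi}_r^*\mathcal{M}$ for $m_r\in\mathbb{Z}$ and $\mathcal{M}\in\mathrm{Pic}(\widetilde{Z_{w[r-1]}})$; applying the inductive hypothesis to $\mathcal{M}$ and pulling back yields the asserted form, with uniqueness of $(m_1,\dots,m_r)$ and $\lambda$ following from the uniqueness at each stage (the pullbacks $\widetilde{\pi}_{w[r]}^*\mathcal{L}(\lambda)$ and the $\mathcal{O}_{\widetilde{Z_w}}(m_1,\dots,m_j)$ restrict compatibly along the tower). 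One should check that the notation $\mathcal{O}_{\widetilde{Z_w}}(m_1,\dots,m_r)$ is consistently defined as the tensor product of the pullbacks to $\widetilde{Z_w}$ of the relative $\mathcal{O}(m_j)$'s from each fibration $\widetilde{\pi}_j$.

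The main obstacle is not the Picard-group bookkeeping but verifying that the relevant maps are genuinely Zariski-locally trivial $\mathbb{P}^1$-bundles (Zariski $\mathbb{P}^1$-bundles, being projectivizations of rank-$2$ bundles, satisfy the split sequence above — this needs the fibration to be of the form $\mathbb{P}(\mathcal{E})$ rather than merely a Brauer–Severi scheme) and, in positive characteristic, that $\mathrm{Pic}(G/B)$ still has the expected description. The former is standard for BSDH-varieties since the fibration $Z_{w[j]} \to Z_{w[j-1]}$ is the associated bundle $Z_{w[j-1]} \times^B P_{i_j}/B$ and $P_{i_j}/B \cong \mathbb{P}^1$, so it is a Zariski-locally trivial $\mathbb{P}^1$-bundle, and this property is preserved by the functor $G\times^B(-)$; the latter is classical (e.g.\ via the Bruhat decomposition, $\mathrm{Pic}(G/B)$ is free on the fundamental weights regardless of characteristic). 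With these in hand the induction closes.
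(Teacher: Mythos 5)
Your proposal is correct and follows essentially the same route as the paper: induction along the tower of $\mathbb{P}^1$-fibrations, using the projective-bundle decomposition $\mathrm{Pic}(\widetilde{Z_{w}})\cong \mathbb{Z}\,\mathcal{O}_{\widetilde{Z_{w}}}(1)\oplus \mathrm{Pic}(\widetilde{Z_{w[1]}})$ at each stage and the classical identification $\mathrm{Pic}(G/B)\cong X(B)$ at the bottom. The only point you flag as needing care (projectivized rank-$2$ bundle rather than a mere $\mathbb{P}^1$-fibration) is handled in the paper by exhibiting the rank-$2$ bundle explicitly as the pullback $\mathcal{O}_{\widetilde{Z_{w[1]}}}(V_{\alpha_{i_{r}}})=\widetilde{\varphi_{w[1]}}^{*}\mathcal{L}(\varepsilon_{0}\boxtimes V_{\alpha_{i_{r}}})$, so $\widetilde{Z_{w}}\cong\mathbb{P}(\mathcal{O}_{\widetilde{Z_{w[1]}}}(V_{\alpha_{i_{r}}}))$, which is exactly the justification your sketch calls for.
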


		\begin{theorem}\label{thm3.1} Let $(m_{1},\ldots,m_{r})\in \mathbb{Z}^{r}$ and $\lambda\in X(B).$ Then the line bundle $$\mathcal{L}=\mathcal{O}_{\widetilde{Z_{w}}}(m_{1},\ldots, m_{r})\otimes \widetilde{\pi_{w[r]}}^{*}\mathcal{L}(\lambda)$$ on $\widetilde{Z_{w}}$ is very ample if and only if $m_{1},\ldots, m_{r}>0$ and $\lambda$ is regular dominant. 
		\end{theorem}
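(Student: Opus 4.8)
\emph{Setup and outline.} The proof will go by induction on $r$, using the factorization of $\widetilde{\pi_{w[r]}}\colon \widetilde{Z_w}\to G/B$ as a tower of $\mathbb{P}^1$-fibrations. Write $w[j]=(s_{i_1},\dots ,s_{i_j})$ and let $\pi\colon \widetilde{Z_w}\to \widetilde{Z_{w[r-1]}}$ be the morphism forgetting the last coordinate; it is a Zariski-locally trivial $\mathbb{P}^1$-bundle, so $\widetilde{Z_w}\cong \mathbb{P}(\mathcal{E})$ for a rank-two vector bundle $\mathcal{E}$ on $\widetilde{Z_{w[r-1]}}$, normalized so that $\mathcal{O}_{\widetilde{Z_w}}(0,\dots ,0,1)=\mathcal{O}_{\mathbb{P}(\mathcal{E})}(1)$; moreover $\pi^{*}\mathcal{O}_{\widetilde{Z_{w[r-1]}}}(m_1,\dots ,m_{r-1})=\mathcal{O}_{\widetilde{Z_w}}(m_1,\dots ,m_{r-1},0)$ and $\widetilde{\pi_{w[r-1]}}\circ\pi=\widetilde{\pi_{w[r]}}$. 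Hence, putting $\mathcal{L}':=\mathcal{O}_{\widetilde{Z_{w[r-1]}}}(m_1,\dots ,m_{r-1})\otimes \widetilde{\pi_{w[r-1]}}^{*}\mathcal{L}(\lambda)$, we have $\mathcal{L}=\mathcal{O}_{\mathbb{P}(\mathcal{E})}(m_r)\otimes \pi^{*}\mathcal{L}'$; the base of the induction is $r=0$, where $\widetilde{Z_{w[0]}}=G/B$ and $\mathcal{L}(\lambda)$ is very ample precisely when $\lambda$ is regular dominant.

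\emph{Necessity.} A very ample line bundle restricts to a very ample line bundle on every closed subvariety. Restricting $\mathcal{L}$ to a fibre of $\widetilde{\pi_{w[r]}}$ — a copy of $Z_w$ on which $\widetilde{\pi_{w[r]}}^{*}\mathcal{L}(\lambda)$ is trivial — gives $\mathcal{O}_{Z_w}(m_1,\dots ,m_r)$, which must be very ample on $Z_w$, and by Lauritzen--Thompsen this forces $m_1,\dots ,m_r>0$. That $\lambda$ is regular dominant then follows from the fact that $\mathcal{L}$, being ample, has positive degree on every curve, applied to suitable $T$-stable rational curves of $\widetilde{Z_w}$ mapping onto the minimal Schubert curves of $G/B$ (equivalently, from the description of the ample cone of $\text{Pic}(\widetilde{Z_w})$ via the $\mathcal{O}(1)$-basis of \cref{sec4}).

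\emph{Sufficiency.} Assume $m_1,\dots ,m_r>0$ and $\lambda$ regular dominant; by the inductive hypothesis $\mathcal{L}'$ is very ample (hence ample) on $Y:=\widetilde{Z_{w[r-1]}}$. The argument rests on the following lemma: \emph{if $Y$ is a projective variety and $\mathcal{F}$ a vector bundle on $Y$ admitting a filtration $0=\mathcal{F}_0\subset\mathcal{F}_1\subset\dots\subset\mathcal{F}_n=\mathcal{F}$ with each $\mathcal{F}_j/\mathcal{F}_{j-1}$ a very ample line bundle and $H^{1}(Y,\mathcal{F}_{j-1})=0$ for all $j$, then $\mathcal{O}_{\mathbb{P}(\mathcal{F})}(1)$ is very ample on $\mathbb{P}(\mathcal{F})$}; this is proved by induction on $n$, since such an $\mathcal{F}$ is globally generated, and the separation of points and of tangent vectors on $\mathbb{P}(\mathcal{F})$ follows from the corresponding properties of the line bundles $\mathcal{F}_j/\mathcal{F}_{j-1}$ together with the vanishing of $H^{1}$ of the subbundles and of their twists by ideal sheaves of length-two subschemes. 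We apply it with $\mathcal{F}:=\mathrm{Sym}^{m_r}\mathcal{E}\otimes \mathcal{L}'$. Via the relative $m_r$-fold Veronese embedding $\mathbb{P}(\mathcal{E})\hookrightarrow \mathbb{P}(\mathrm{Sym}^{m_r}\mathcal{E})$ (a closed immersion pulling $\mathcal{O}(1)$ back to $\mathcal{O}_{\mathbb{P}(\mathcal{E})}(m_r)$) one has $\mathcal{L}=\mathcal{O}_{\mathbb{P}(\mathcal{F})}(1)\big|_{\mathbb{P}(\mathcal{E})}$, so very ampleness of $\mathcal{O}_{\mathbb{P}(\mathcal{F})}(1)$ gives very ampleness of $\mathcal{L}$. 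Now $\mathcal{E}$, being built from the two-step $B$-module filtration of the rank-two representation defining the last $\mathbb{P}^1$-factor, fits in an exact sequence $0\to \mathcal{N}_1\to \mathcal{E}\to \mathcal{N}_2\to 0$ of line bundles on $Y$; granting (see below) that in the above normalization $\mathcal{N}_1$ and $\mathcal{N}_2$ are \emph{nef}, the bundle $\mathrm{Sym}^{m_r}\mathcal{E}$ has a filtration with graded pieces $\mathcal{N}_1^{\otimes a}\otimes \mathcal{N}_2^{\otimes(m_r-a)}$ ($0\le a\le m_r$), all nef, whence $\mathcal{F}$ has a filtration with graded pieces $\mathcal{N}_1^{\otimes a}\otimes \mathcal{N}_2^{\otimes(m_r-a)}\otimes \mathcal{L}'$. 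Each of these is the tensor product of a nef line bundle with the very ample $\mathcal{L}'$; since nef equals globally generated on $Y$ (the analogue for $\widetilde{Z_w}$ of \cref{thm4.16}) and the tensor product of a globally generated line bundle with a very ample one is very ample, each such piece is very ample, and being in particular ample on the Frobenius-split variety $Y$ (Mehta--Ramanathan) it has vanishing $H^{1}$; consequently every $\mathcal{F}_j$ has $H^{1}(Y,\mathcal{F}_j)=0$. The lemma now yields that $\mathcal{L}$ is very ample, completing the induction.

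\emph{The main obstacle.} What remains — and is the technical heart of the argument — is to verify that, in the normalization of $\mathcal{E}$ dictated by the $\mathcal{O}(1)$-basis, the line bundles $\mathcal{N}_1$ and $\mathcal{N}_2$ in $0\to \mathcal{N}_1\to \mathcal{E}\to \mathcal{N}_2\to 0$ are nef on $\widetilde{Z_{w[r-1]}}$. This requires making $\mathcal{E}$ explicit on the whole of $\widetilde{Z_{w[r-1]}}$, not merely on a single fibre $Z_{w[r-1]}$ where it is the classical rank-two bundle of Bott--Samelson theory, computing the $\mathcal{O}(1)$-coordinates of $\mathcal{N}_1$ and $\mathcal{N}_2$ — this is exactly where the triangular data $m_{jk}$ and the explicit $\mathcal{O}(1)$-basis of \cref{sec4} come in — and checking that these coordinates are non-negative so that, by the nef $=$ globally generated characterization, $\mathcal{N}_1$ and $\mathcal{N}_2$ are nef. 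Once these inequalities are in hand, the induction above is complete.
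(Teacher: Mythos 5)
Your sufficiency argument is incomplete at exactly the point you flag, and the missing claim is in fact false, not merely unverified. In the normalization you impose, $\mathcal{E}$ is forced to be $\widetilde{\pi_{w[1]}}_{*}\mathcal{O}_{\widetilde{Z_{w}}}(1)=\mathcal{O}_{\widetilde{Z_{w[1]}}}(\varepsilon_{0}\boxtimes V_{\alpha_{i_{r}}})$, so the sub-line bundle in $0\to\mathcal{N}_{1}\to\mathcal{E}\to\mathcal{N}_{2}\to 0$ is $\mathcal{N}_{1}=\mathcal{O}_{\widetilde{Z_{w[1]}}}(s_{i_{r}}(\varpi_{i_{r}}))$, and this is \emph{not} nef in general: for $w=(s_{\alpha},s_{\alpha})$, restricting $\mathcal{N}_{1}$ to the fibre $P_{\alpha}/B$ of $\widetilde{Z_{w[1]}}=G\times^{B}P_{\alpha}/B\to G/B$ gives degree $\langle\varpi_{\alpha}-\alpha,\alpha^{\vee}\rangle=-1$; the same failure occurs for the reduced word $(s_{\alpha},s_{\beta},s_{\alpha})$ in type $A_{2}$, where $\mathcal{N}_{1}$ restricts to $\mathcal{O}_{Z_{w[1]}}(-1,1)$ on a fibre. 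Consequently the graded pieces $\mathcal{N}_{1}^{\otimes a}\otimes\mathcal{N}_{2}^{\otimes(m_{r}-a)}\otimes\mathcal{L}'$ need not be very ample (already for $m_{r}=m_{1}=1$ the piece $\mathcal{N}_{1}\otimes\mathcal{L}'$ has degree $0$ on the curve above), your filtration lemma does not apply, and the induction collapses. No such positivity of the sub-bundle is needed: the paper simply observes that $\mathcal{E}$ itself is globally generated, being the pullback of $\mathcal{L}(\varepsilon_{0}\boxtimes V_{\alpha_{i_{r}}})$ from $G/B\times G/B$, so $\widetilde{Z_{w}}=\mathbb{P}(\mathcal{E})$ embeds in $\mathbb{P}^{N}\times\widetilde{Z_{w[1]}}$ over $\widetilde{Z_{w[1]}}$ with $\mathcal{O}_{\widetilde{Z_{w}}}(1)$ the restriction of $\mathcal{O}_{\mathbb{P}^{N}}(1)\boxtimes\mathcal{O}$, and then $\mathcal{O}_{\widetilde{Z_{w}}}(m_{r})\otimes\pi^{*}\mathcal{L}'$ is the restriction of the very ample bundle $\mathcal{O}_{\mathbb{P}^{N}}(m_{r})\boxtimes\mathcal{L}'$, which finishes the inductive step.

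There is also a gap in necessity. Restricting to a fibre $Z_{w}$ of $\widetilde{\pi_{w[r]}}$ does give $m_{1},\dots,m_{r}>0$ by Lauritzen--Thomsen, but your deduction that $\lambda$ is regular dominant is unsupported: the parenthetical appeal to ``the description of the ample cone via the $\mathcal{O}(1)$-basis'' is circular (that description is precisely what \cref{thm3.1} and \cref{cor3.3} establish), and the obvious $T$-invariant curves lying over $P_{\alpha}/B$ (for instance, sections through the base point of $Z_{w}$) have $\deg\mathcal{L}=\sum_{j}m_{j}\langle\varpi_{i_{j}},\alpha^{\vee}\rangle+\langle\lambda,\alpha^{\vee}\rangle$, and positivity of this does not force $\langle\lambda,\alpha^{\vee}\rangle>0$ once the $m_{j}$ are positive. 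To make your argument work you would have to exhibit, for each simple $\alpha$, a $T$-curve on which every $\mathcal{O}(1)$-factor has degree zero (this can be done by choosing the $T$-fixed point of $Z_{w}$ whose subword uses $s_{\alpha}$ exactly at its first occurrence, but it requires a genuine verification you have not given). The paper sidesteps all of this: it restricts $\mathcal{L}$ to $P_{\alpha}\times^{B}Z_{w}\cong Z_{w'}$ with $w'=(s_{\alpha},w)$, notes that the restriction is $\mathcal{O}_{Z_{w'}}(\langle\lambda,\alpha^{\vee}\rangle,m_{1},\dots,m_{r})$, and applies Lauritzen--Thomsen's very-ampleness criterion on the BSDH variety $Z_{w'}$ to obtain $m_{1},\dots,m_{r}>0$ and $\langle\lambda,\alpha^{\vee}\rangle>0$ for every $\alpha\in S$ in one stroke.
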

		As a consequence of \cref{thm3.1} we prove that any ample line bundle on $\widetilde{Z_{w}}$ is very ample (cf. \cref{cor4.3}). Also, we show that a line bundle $\mathcal{L}= \mathcal{O}_{\widetilde{Z_{w}}}(m_{1},\ldots ,m_{r})\otimes\widetilde{\pi_{w[r]}}^{*}\mathcal{L}(\lambda)$ on $\widetilde{Z_{w}}$ is globally generated if and only if $m_{1},\dots,m_{r}\ge 0$ and $\lambda$ is dominant (cf. \cref{cor3.3}). 
		
		\begin{theorem}\label{thm3.5}
			A line bundle $\mathcal{L}$ on $\widetilde{Z_{w}}$ is  globally generated if and only if $\mathcal{L}$ is nef.
		\end{theorem}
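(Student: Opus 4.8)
The plan is to deduce \cref{thm3.5} from the classifications of ample (\cref{thm3.1}, \cref{cor4.3}) and of globally generated (\cref{cor3.3}) line bundles on $\widetilde{Z_w}$, together with the general comparison between the nef and ample cones. One implication is formal and holds on any projective variety: if $\mathcal{L}$ is globally generated, then for every irreducible curve $C\subseteq\widetilde{Z_w}$ the restriction $\mathcal{L}|_{C}$ is again globally generated, hence has a nonzero global section and therefore nonnegative degree; thus $\mathcal{L}\cdot C\ge 0$ for all $C$, i.e. $\mathcal{L}$ is nef. (Equivalently, $\mathcal{L}$ is the pullback of $\mathcal{O}(1)$ along the morphism to projective space it defines.)

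For the converse I would work in $\mathrm{Pic}(\widetilde{Z_w})_{\mathbb{R}}$. By \cref{lem4.1} every line bundle is $\mathcal{L}=\mathcal{O}_{\widetilde{Z_w}}(m_1,\ldots,m_r)\otimes\widetilde{\pi_{w[r]}}^*\mathcal{L}(\lambda)$ for a unique $(m_1,\ldots,m_r)\in\mathbb{Z}^r$ and a unique $\lambda\in X(B)$; since $\mathrm{Pic}(\widetilde{Z_w})$ is free abelian it carries numerical equivalence faithfully, so ampleness, nefness and global generation may all be detected in $\mathrm{Pic}(\widetilde{Z_w})_{\mathbb{R}}\cong\mathbb{R}^r\oplus X(B)_{\mathbb{R}}$. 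By \cref{thm3.1} and \cref{cor4.3} the ample cone is exactly the open cone $\{(m_1,\ldots,m_r,\lambda):m_j>0\text{ for all }j,\ \langle\lambda,\alpha_i^\vee\rangle>0\text{ for all }i\}$, which is simplicial because the $r+n$ functionals $m_1,\ldots,m_r$ and $\langle\,\cdot\,,\alpha_i^\vee\rangle$ ($1\le i\le n$) are linearly independent on $\mathbb{R}^r\oplus X(B)_{\mathbb{R}}$. By Kleiman's theorem the nef cone is the closure of the ample cone, namely $\{m_j\ge 0\text{ for all }j,\ \langle\lambda,\alpha_i^\vee\rangle\ge 0\text{ for all }i\}$; that is, $\mathcal{L}$ is nef if and only if $m_1,\ldots,m_r\ge 0$ and $\lambda$ is dominant. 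By \cref{cor3.3} this is precisely the condition for $\mathcal{L}$ to be globally generated, which completes the proof.

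If one prefers to avoid invoking Kleiman's theorem, the same conclusion can be obtained directly: fix an ample line bundle $\mathcal{A}=\mathcal{O}_{\widetilde{Z_w}}(1,\ldots,1)\otimes\widetilde{\pi_{w[r]}}^*\mathcal{L}(\rho)$ with $\rho$ regular dominant (ample by \cref{thm3.1}); then for $\mathcal{L}$ nef and any rational $\varepsilon>0$ the $\mathbb{Q}$-class $\mathcal{L}\otimes\mathcal{A}^{\otimes\varepsilon}$ is a sum of a nef and an ample class, hence ample, so the ampleness criterion of \cref{thm3.1}--\cref{cor4.3} in rational coefficients gives $m_j+\varepsilon>0$ and $\langle\lambda,\alpha_i^\vee\rangle+\varepsilon>0$ for every $\varepsilon>0$; letting $\varepsilon\to 0^+$ yields $m_j\ge 0$ and $\lambda$ dominant, and \cref{cor3.3} concludes. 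The only substantial inputs are \cref{thm3.1}/\cref{cor4.3} and \cref{cor3.3}; granting those, \cref{thm3.5} reduces to the elementary fact that the closure of the open cone cut out by finitely many linearly independent functionals is the associated closed cone, so I do not anticipate a genuine obstacle here.
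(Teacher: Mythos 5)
Your proposal is correct, but it proves the converse by a genuinely different route than the paper. You deduce the nef cone from the ample cone: Theorem \ref{thm3.1} together with Corollary \ref{cor4.3} identifies the ample classes as the open simplicial cone $\{m_j>0,\ \langle\lambda,\alpha_i^\vee\rangle>0\}$, and then Kleiman-type results (nef $=$ closure of ample, or equivalently nef $+$ ample $=$ ample for $\mathbb{Q}$-classes, valid for projective varieties in any characteristic) give that a nef bundle has $m_1,\dots,m_r\ge 0$ and $\lambda$ dominant, after which Corollary \ref{cor3.3} finishes. The paper instead stays entirely within its equivariant restriction technique: for each simple root $\alpha$ it restricts $\mathcal{L}$ to the closed subvariety $P_{\alpha}\times^{B}Z_{w}\subset\widetilde{Z_{w}}$, observes that $P_{\alpha}\times^{B}Z_{w}=Z_{w'}$ is itself a BSDH variety for $w'=(s_{\alpha},w)$, applies the already-proved Theorem \ref{thm4.16} (nef $\Rightarrow$ globally generated on BSDH varieties) and the Lauritzen--Thomsen criterion to get $m_1,\dots,m_r\ge 0$ and $\langle\lambda,\alpha^{\vee}\rangle\ge 0$, and then invokes Corollary \ref{cor3.3}; no cone-closure or numerical-equivalence input is needed. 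Your argument is shorter given the ample classification, but note two small points: the claim that numerical equivalence is faithful on $\mathrm{Pic}(\widetilde{Z_{w}})$ does not follow merely from freeness as stated --- one should say that $\mathrm{Pic}$ is finitely generated (so $\mathrm{Pic}^{0}$ vanishes) and torsion-free (so $\mathrm{Pic}^{\tau}$ vanishes) --- and in the $\varepsilon$-perturbation variant one must clear denominators and apply Theorem \ref{thm3.1} to an integral multiple, using that the ampleness conditions are homogeneous in $(m_1,\dots,m_r,\lambda)$. With these remarks your second, Kleiman-free formulation is a complete and valid alternative proof; the paper's proof has the advantage of being independent of the ample/very ample classification and of general cone machinery.
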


		Let $F$ be a smooth projective $B$-variety. Let $D'$ be a $B$-invariant effective divisor on $F.$ Then consider the variety $X= G\times^{B} F$ and let $\pi: X\longrightarrow G/B$ be the natural projection map defined by $\pi([g,f])=gB$ for $g\in G$ and $f\in F.$ Let $\widetilde{D'}:= G\times^{B} D'$ denote the $G$-invariant effective divisor on $X.$ 
		
		Let $p: G\longrightarrow G/B$ be the natural projection map.
		Let $\widetilde{D''}:=\pi^{-1}(\partial G/B)=p^{-1}(\partial G/B)\times^{B} F,$ where $\partial G/B$ denotes the boundary divisor of $G/B.$ Let $\widetilde{D}:= \widetilde{D'}+\widetilde{D''}.$ 
		
		\begin{lemma} \label{lem4.8}
			If $F-{\rm Supp}(D')$ is affine, then the effective divisor $\widetilde{D}$ is big on $X.$
		\end{lemma}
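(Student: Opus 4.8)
The plan is to reduce the assertion to the statement that the open set $U:=X\setminus\mathrm{Supp}(\widetilde D)$ is affine, and then to invoke the standard fact that an effective Cartier divisor on a normal projective variety whose complement is affine is big.

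First I would trivialize the bundle over the big cell. Let $C_{0}:=G/B\setminus\partial G/B$ be the open Bruhat cell; it is isomorphic to an affine space $\mathbb{A}^{N}$ with $N=\dim G/B$, and $p^{-1}(C_{0})\subseteq G$ is a trivial $B$-bundle over $C_{0}$ (the big cell of $G$ being the direct product of the unipotent radical of the Borel opposite to $B$ with $B$). Hence $\pi^{-1}(C_{0})=p^{-1}(C_{0})\times^{B}F\cong C_{0}\times F$, and under this identification $\widetilde{D'}\cap\pi^{-1}(C_{0})$ corresponds to $C_{0}\times D'$, whereas $\widetilde{D''}\cap\pi^{-1}(C_{0})=\pi^{-1}(\partial G/B)\cap\pi^{-1}(C_{0})=\varnothing$. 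Since $\mathrm{Supp}(\widetilde{D''})=\pi^{-1}(\partial G/B)=X\setminus\pi^{-1}(C_{0})$ contains everything outside $\pi^{-1}(C_{0})$, we get
$$U=X\setminus\mathrm{Supp}(\widetilde D)=\pi^{-1}(C_{0})\setminus\mathrm{Supp}(\widetilde{D'})\cong C_{0}\times\bigl(F\setminus\mathrm{Supp}(D')\bigr).$$
By hypothesis $F\setminus\mathrm{Supp}(D')$ is affine, and $C_{0}\cong\mathbb{A}^{N}$ is affine, so $U$ is affine; it is also nonempty (hence dense in the irreducible variety $X$), since $D'$ is a proper divisor.

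It remains to show that an effective divisor with affine complement on the smooth projective variety $X=G\times^{B}F$ is big. As $X$ is smooth, $\widetilde D$ is Cartier. The coordinate ring $\mathcal{O}_X(U)$ is a finitely generated $k$-algebra, say generated by $f_{1},\ldots,f_{k}$; each $f_{i}$ is a rational function on $X$ whose polar divisor is supported on $\mathrm{Supp}(\widetilde D)=X\setminus U$, so by normality of $X$ there is $m\gg 0$ with $f_{1}s_{m},\ldots,f_{k}s_{m}\in H^{0}(X,\mathcal{O}_X(m\widetilde D))$, where $s_{m}$ is the section with $\mathrm{div}(s_{m})=m\widetilde D$. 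Completing $s_{m},f_{1}s_{m},\ldots,f_{k}s_{m}$ to a basis of $H^{0}(X,\mathcal{O}_X(m\widetilde D))$ yields a rational map $\phi$ from $X$ to a projective space which, restricted to $U$, is a morphism whose composition with a coordinate projection is the closed immersion $U\hookrightarrow\mathbb{A}^{k}$, $u\mapsto(f_{1}(u),\ldots,f_{k}(u))$. Thus $\phi|_{U}$ is injective, so $\phi$ is birational onto its image, the image has dimension $\dim X$, and therefore the Iitaka dimension of $\widetilde D$ equals $\dim X$; that is, $\widetilde D$ is big. (Alternatively, one applies Goodman's theorem to get a proper birational $\mu:X'\to X$, an isomorphism over $U$, with $X'\setminus\mu^{-1}(U)$ the support of an ample divisor $A$; then $\mathrm{Supp}(A)=\mathrm{Supp}(\mu^{*}\widetilde D)$ forces $N_{0}\,\mu^{*}\widetilde D\ge A$ for $N_{0}\gg 0$, so $\mu^{*}\widetilde D$ is a positive $\mathbb{Q}$-multiple of an ample divisor plus an effective divisor, hence big, and bigness descends along $\mu$.)

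I expect no serious obstacle: the first two paragraphs are a direct translation through the big-cell trivialization, and the last is a standard fact. The one delicate point is in the last paragraph — arranging that the finitely many generators of $\mathcal{O}_X(U)$ all appear inside $H^{0}(X,\mathcal{O}_X(m\widetilde D))$ for a single $m$ — which is exactly where normality of $X$ is used.
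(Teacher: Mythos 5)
Your proposal is correct and follows essentially the same route as the paper: the paper likewise identifies $X-{\rm Supp}(\widetilde D)=(G-p^{-1}(\partial G/B))\times^{B}(F-D')$, observes it is affine because the big cell $G/B-\partial G/B$ and $F-{\rm Supp}(D')$ are affine, and then concludes bigness from \cref{lem3.3}. The only difference is that your last paragraph re-proves that criterion (an effective Cartier divisor with affine complement is big) from scratch, where the paper simply cites it from Brion.
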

	
 Using \cref{lem4.8} and the description of the anti-canonical line bundle $\omega_{\widetilde{Z_{w}}}^{-1}$ of $\widetilde{Z_{w}}$ due to Mehta-Ramanathan (cf. \cite[Proposition 2]{MR88}) we prove that  $\omega_{\widetilde{Z_{w}}}^{-1}$ is big. Using \cref{thm3.5} and \cref{lem4.8} we prove that $\widetilde{Z_{w}}$ is weak-Fano if and only if  $\omega_{\widetilde{Z_{w}}}^{-1}$ is globally generated. We also prove that $\omega_{\widetilde{Z_{w}}}^{-1}$  is globally generated if and only if  it is nef (cf. \cref{cor4.10}).
 
 For a fix integer $1\le j\le r,$ let $\lambda_{j}=\alpha_{i_{j}}-\sum_{k=1}^{j}m_{jk}\varpi_{i_{j}}.$ 
 \begin{theorem}\label{thm5.7}
  $\widetilde{Z_{w}}$ is weak-Fano (respectively, Fano) if and only if 
 		\begin{itemize}
 			\item [(i)] $\sum_{l=j}^{r}m_{lj}\ge 0 ~(\text{respectively,} \sum_{l=j}^{r}m_{lj} >0)$ for all $1\le j\le r-1.$
 			
 			\item [(ii)] $2\rho+\sum_{j=1}^{r}\lambda_{j}$ is dominant (respectively, regular dominant).
 		\end{itemize} 
 \end{theorem}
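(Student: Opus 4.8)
We will deduce \cref{thm5.7} from the characterizations of nef and very ample line bundles on $\widetilde{Z_w}$ obtained above, once $\omega_{\widetilde{Z_w}}^{-1}$ is written in the $\mathcal O(1)$-basis of \cref{lem4.1}; say $\omega_{\widetilde{Z_w}}^{-1}\cong\mathcal O_{\widetilde{Z_w}}(a_1,\dots,a_r)\otimes\widetilde{\pi_{w[r]}}^*\mathcal L(\nu)$. For the Fano part, $\widetilde{Z_w}$ is Fano iff $\omega_{\widetilde{Z_w}}^{-1}$ is ample, hence by \cref{cor4.3} iff it is very ample, hence by \cref{thm3.1} iff $a_1,\dots,a_r>0$ and $\nu$ is regular dominant. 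For the weak-Fano part, recall that $\omega_{\widetilde{Z_w}}^{-1}$ is always big (via \cref{lem4.8}, as used for \cref{cor4.10}); so $\widetilde{Z_w}$ is weak-Fano iff $\omega_{\widetilde{Z_w}}^{-1}$ is nef, hence by \cref{thm3.5} iff it is globally generated, hence by \cref{cor3.3} iff $a_1,\dots,a_r\ge 0$ and $\nu$ is dominant. Thus it remains to identify the tuple $(a_1,\dots,a_r)$ and the weight $\nu$.

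For this I would start from the Mehta--Ramanathan description of the anti-canonical bundle (\cite[Proposition 2]{MR88}). The variety $\widetilde{Z_w}$ is a tower of $\mathbb P^1$-fibrations $\widetilde{Z_{w[r]}}\to\widetilde{Z_{w[r-1]}}\to\cdots\to\widetilde{Z_{w[0]}}=G/B$, and
\begin{equation*}
\omega_{\widetilde{Z_w}}^{-1}\;\cong\;p^{*}\mathcal L(2\rho)\otimes\bigotimes_{j=1}^{r}\widetilde{\pi_{w[j]}}^{*}\mathcal L(\alpha_{i_j}),
\end{equation*}
the $p^{*}\mathcal L(2\rho)$-factor being $p^{*}\omega_{G/B}^{-1}$ and the $j$-th tensor factor being the $\widetilde{Z_w}$-pullback of the relative anti-canonical bundle of the $j$-th fibration. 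To bring this into the basis of \cref{lem4.1} one rewrites each $\widetilde{\pi_{w[j]}}^{*}\mathcal L(\,\cdot\,)$ (and $p^{*}\mathcal L(\,\cdot\,)$) in terms of $\widetilde{\pi_{w[r]}}^{*}\mathcal L(\,\cdot\,)$ and the bundles $\mathcal O_{\widetilde{Z_w}}(0,\dots,0,1,0,\dots,0)$, using that passing the weight line bundle $\mathcal L(\varpi_{i_k})$ across stage $k$ of the tower trades one unit of that weight for one unit of the $k$-th $\mathcal O(1)$. Book-keeping this rewriting for the $j$-th factor $\widetilde{\pi_{w[j]}}^{*}\mathcal L(\alpha_{i_j})$ is exactly the recursion defining $m_{jk}$ ($1\le k\le j$): $m_{jk}$ is the resulting coefficient of the $k$-th $\mathcal O(1)$, and $\lambda_j=\alpha_{i_j}-\sum_{k=1}^{j}m_{jk}\varpi_{i_k}$ is the weight that survives to level $r$ (the recursion being arranged so that $\langle\alpha_{i_j}-\sum_{l=k}^{j}m_{jl}\varpi_{i_l},\alpha_{i_k}^{\vee}\rangle=0$).

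Summing the contributions over $j$ (the level-$0$ factor $p^{*}\mathcal L(2\rho)$ contributing the $2\rho$), one obtains
\begin{equation*}
\omega_{\widetilde{Z_w}}^{-1}\;\cong\;\mathcal O_{\widetilde{Z_w}}\Big(\textstyle\sum_{l=1}^{r}m_{l1},\,\dots,\,\sum_{l=r}^{r}m_{lr}\Big)\otimes\widetilde{\pi_{w[r]}}^{*}\mathcal L\Big(2\rho+\sum_{j=1}^{r}\lambda_{j}\Big),
\end{equation*}
i.e.\ $a_j=\sum_{l=j}^{r}m_{lj}$ and $\nu=2\rho+\sum_{j=1}^{r}\lambda_j$. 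Feeding this into the dichotomy of the first paragraph finishes the proof: $a_r=\sum_{l=r}^{r}m_{lr}=m_{rr}=2>0$ is automatic, so "$a_j\ge 0$ (resp.\ $>0$) for all $j$" is equivalent to condition (i), while "$\nu$ dominant (resp.\ regular dominant)" is exactly condition (ii); the reverse implications follow from the same chain of equivalences.

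The only step requiring real work is the passage to the $\mathcal O(1)$-basis, namely verifying that the displayed formula for $\omega_{\widetilde{Z_w}}^{-1}$ holds with the coefficients $\sum_{l=j}^{r}m_{lj}$ and the weight $2\rho+\sum_{j}\lambda_j$ on the nose. This is an induction along the tower, comparing the maps $\widetilde{\pi_{w[j]}}$ and the normalized relative hyperplane bundles stage by stage; the delicate point is a word $w$ with repeated simple reflections, where $\langle\varpi_{i_l},\alpha_{i_k}^{\vee}\rangle$ can be nonzero for $l\ne k$, so the recursion for $m_{jk}$ is not triangular in the naive sense and has to be matched to the geometry with care. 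Granting that identification, the remainder of \cref{thm5.7} is formal from \cref{lem4.1}, \cref{thm3.1}, \cref{cor3.3}, \cref{thm3.5}, \cref{cor4.3}, \cref{lem4.8}, and \cref{cor4.10}.
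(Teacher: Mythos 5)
Your proposal is correct and follows essentially the paper's own route: you reduce Fano (resp.\ weak-Fano) to very ampleness (resp.\ global generation) of $\omega_{\widetilde{Z_{w}}}^{-1}$ via \cref{cor4.3}, \cref{thm3.1}, \cref{thm3.5}, \cref{cor3.3} and the bigness coming from \cref{lem4.8}/\cref{thm4.8}, and then identify the coefficients in the $\mathcal{O}(1)$-basis, which is exactly what the paper carries out in \cref{lem4.3}, \cref{cor4.2} and \cref{prop4.5} (giving $a_{j}=\sum_{l=j}^{r}m_{lj}$, $\nu=2\rho+\sum_{j}\lambda_{j}$, with $a_{r}=2$ automatic). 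Two cosmetic slips only: the tower indexing is reversed ($\widetilde{Z_{w[r]}}=G/B$ and $\widetilde{Z_{w[0]}}=\widetilde{Z_{w}}$, not the other way around), and your displayed product-of-relative-anticanonical-bundles formula is the content of \cref{lem4.3} (proved by relative tangent sequences), not the boundary-divisor description of Mehta--Ramanathan, which the paper invokes only for bigness in \cref{thm4.8}.
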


Even if  $Z_{w}$ is Fano (respectively, weak-Fano), $\widetilde{Z_{w}}$ may not be Fano (respectively, weak-Fano) (cf. \cref{exam5.9}).

A vector bundle $\mathcal{E}$ on a projective variety $X$ is said to be  {\it nef}
(respectively,  {\it ample}) if $\mathcal{O}_{\mathbb{P}(\mathcal{E})}(1)$ is a {\it nef} (respectively, {\it ample}) line bundle on the projective
bundle $\mathbb{P}(\mathcal{E})$ (parameterizing the quotient line bundles of $\mathcal{E}$) over $X.$ 

Let $\mathcal{E}$ be a vector bundle on $X.$ If $\mathcal{E}$ is ample, then its restriction to every curve (closed irreducible subvariety of dimension one) is also ample.  The converse is not true in general. Mumford gave an example of a non-ample line bundle on a surface which intersects
every curve positively (cf. \cite[Example 10.6]{Har1} or \cite[Example 1.5.2]{L}). In some special situations the converse does hold, for example when $\mathcal{E}$ is a torus-equivariant vector bundle on a toric variety (cf. \cite{HMP}), a generalized
flag variety (cf. \cite{BHN}), a  Bott-Samelson-Demazure-Hansen (BSDH) variety or a wonderful compactification of a symmetric variety of minimal rank (cf. \cite{BHK}).

In this article we get a similar result for  $G$-BSDH varieties.

\begin{theorem}\label{thm6.3}
Let $w=(s_{i_{1}},s_{i_{2}},\ldots, s_{i_{r}})$ be a reduced sequence of simple reflections. Then a $T$-equivariant vector bundle $\mathcal{E}$ on $\widetilde{Z_{w}}$ is nef (respectively, ample) if and only if the restriction $\mathcal{E}|_{C}$ of $\mathcal{E}$ to every $T$-invariant curve $C$ on $\widetilde{Z_{w}}$ is nef (respectively, ample).
\end{theorem}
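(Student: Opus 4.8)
The plan is to reduce the statement for $\widetilde{Z_w}$ to the corresponding statement for $Z_w$ (the ordinary BSDH-variety), which is already known by \cite{BHK}, together with the statement for $G/B$, which is known by \cite{BHN}. The forward implication is trivial: ampleness (nefness) of $\mathcal{E}$ forces ampleness (nefness) of every restriction, in particular to $T$-invariant curves. So the content is the converse. Throughout, let me write $\widetilde{\pi}\colon \widetilde{Z_w}\to G/B$ for the natural fibration with fibre $Z_w$, and let $q\colon \widetilde{Z_w}=G\times^B Z_w\to Z_w$ be induced by the $B$-equivariant structure; more precisely, since $G\to G/B$ is a $B$-bundle which is Zariski-locally trivial (as $B$ is special), $\widetilde{Z_w}$ restricted over the big cell is $U^-\times Z_w$, and this is how $T$-invariant curves in $\widetilde{Z_w}$ will be organized.

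\textbf{Step 1: Classify the $T$-invariant curves on $\widetilde{Z_w}$.} The variety $\widetilde{Z_w}$ has finitely many $T$-fixed points, indexed by $W\times\{0,1\}^r$ in the usual way (a choice of $T$-fixed point $wB\in G/B$ together with the combinatorial data of a $T$-fixed point of the fibre $Z_w$). As for any smooth projective $T$-variety with finitely many fixed points and finitely many one-dimensional orbits, the $T$-invariant curves come in two flavors: (a) those contracted by $\widetilde{\pi}$, i.e. lying in a single fibre $g\cdot Z_w$ — these are (translates of) $T$-invariant curves of the BSDH-variety $Z_w$ itself; and (b) those mapping isomorphically (or as a degree-one cover) onto a $T$-invariant curve of $G/B$. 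I would make this dichotomy precise by analyzing the tower of $\mathbb{P}^1$-fibrations $\widetilde{Z_w}\to \widetilde{Z_{w[r-1]}}\to\cdots\to \widetilde{Z_{w[1]}}\to G/B$ and using that each $T$-invariant curve is a section or a fibre of one of these $\mathbb{P}^1$-bundles over a $T$-fixed point of the base.

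\textbf{Step 2: Handle the two families of curves.} For curves of type (a): since these are contained in fibres isomorphic to $Z_w$, and since by \cite{BHK} a $T$-equivariant bundle on a BSDH-variety is nef (ample) iff its restriction to every $T$-invariant curve is, I want to conclude that the restriction $\mathcal{E}|_{Z_w}$ (to the fibre over $B/B$, which carries a $B$-equivariant hence $T$-equivariant structure) is nef (ample). For curves of type (b): I would like to push the problem down to $G/B$. The natural device is to restrict $\mathcal{E}$ to a suitable $G$-stable or $T$-stable section of $\widetilde{\pi}$; but $\widetilde{\pi}$ need not admit a section in general. Instead, I expect to argue fibrewise using the criterion that a bundle $\mathcal{E}$ on the total space of a fibration is ample iff it is ample on each fibre \emph{and} "ample in the base direction" — concretely, I would reduce to showing that $(\widetilde{\pi}_*(\mathcal{E}\otimes A))$ for $A$ a suitably positive line bundle is ample on $G/B$, or more directly, use that $\mathbb{P}(\mathcal{E})\to \mathbb{P}(\mathcal{E}|_{\text{fibre}})$-type reductions let one test $\mathcal{O}_{\mathbb{P}(\mathcal{E})}(1)$ on $T$-invariant curves of $\mathbb{P}(\mathcal{E})$, and these map to $T$-invariant curves of $\widetilde{Z_w}$. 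This last reformulation is the cleanest: nefness (ampleness) of $\mathcal{E}$ means nefness (ampleness) of $L:=\mathcal{O}_{\mathbb{P}(\mathcal{E})}(1)$ on $Y:=\mathbb{P}(\mathcal{E})$, and since $\mathcal{E}$ is $T$-equivariant, $Y$ is a $T$-variety and $L$ is a $T$-linearized line bundle; by the toric-like Nakai–Moishezon / Kleiman criterion for $T$-varieties with enough structure (which is exactly what \cite{BHN}, \cite{BHK} exploit), it suffices to test $L$ on $T$-invariant curves of $Y$. Each such curve maps to a $T$-invariant curve of $\widetilde{Z_w}$, over which $\mathcal{E}$ restricts to a $T$-equivariant bundle on either a $\mathbb{P}^1$ (type (a), inside a fibre — handled by \cite{BHK}) or a $T$-invariant curve of $G/B$ pulled back — handled by \cite{BHN}. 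I would need to check that "$L$ nef/ample on all $T$-invariant curves of $Y$ $\Rightarrow$ $L$ nef/ample" genuinely applies to $Y=\mathbb{P}(\mathcal{E})$; this holds because $\widetilde{Z_w}$ is a smooth projective $T$-variety for which this curve-testing criterion is valid (it is an iterated $\mathbb{P}^1$-bundle over $G/B$, so one can induct), and $Y$ inherits enough of this structure, being itself built by projectivizing over such a base.

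\textbf{Main obstacle.} The genuine difficulty is \textbf{Step 2, the "base direction"}: a bundle that is fibrewise ample and positive on all horizontal $T$-curves need not be ample without an argument controlling the interplay of the two directions — this is precisely where Mumford's counterexample lives. The resolution I anticipate is to not separate the directions at all, but rather to pass to $Y=\mathbb{P}(\mathcal{E})$ and prove the curve-testing criterion directly for the $T$-variety $Y$, by induction on the length $r$ of $w$: the base case $r=0$ is $Y=\mathbb{P}(\mathcal{E}')$ for $\mathcal{E}'$ a $T$-equivariant bundle on $G/B$, which is \cite{BHN}; the inductive step uses that $\widetilde{Z_w}\to\widetilde{Z_{w[r-1]}}$ is a $\mathbb{P}^1$-bundle, that a $\mathbb{P}^1$-bundle tower preserves the "every $T$-invariant bundle is ample iff it is curve-ample" property (this is the technical heart, essentially the argument of \cite{BHK} adapted to the relative setting over $G/B$ rather than over a point), and that the $T$-invariant curves in $Y$ and in the intermediate projectivizations are accounted for by the curves listed in Step 1. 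I would organize the write-up as: (1) the curve classification lemma; (2) a relative version of the \cite{BHK} criterion for $\mathbb{P}^1$-bundles; (3) the induction assembling these with \cite{BHN} as the base case; (4) conclude Theorem \ref{thm6.3}.
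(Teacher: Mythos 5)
There is a genuine gap at the core of your plan. The entire difficulty of \cref{thm6.3} is the ample case: positivity of $\mathcal{E}$ on every $T$-invariant curve must be upgraded to ampleness, and this is exactly where Mumford-type examples show that no soft argument exists. You acknowledge this, but your proposed resolution --- a ``relative version of the \cite{BHK} criterion for $\mathbb{P}^1$-bundles'' proved ``by induction on $r$'' --- is precisely the unproven statement you would need; as written it is asserted, not argued, and it is not clear it is even true in the generality you use it (you never invoke the hypothesis that $w$ is reduced, which should be a warning sign). The paper's route is different and shows where reducedness enters: by Magyar's theorem \cite{Mag1}, a reduced $Z_{w}$ embeds $B$-equivariantly in $\prod_{j=1}^{r}G/P_{S\setminus\{\alpha_{i_{j}}\}}$, whence $\widetilde{Z_{w}}$ embeds $G$-equivariantly in $G/B\times \prod_{j=1}^{r}G/P_{S\setminus\{\alpha_{i_{j}}\}}$; the key new input is that the restriction of Picard groups under this embedding is an isomorphism (\cref{lem6.1}, from \cite{LT}, and \cref{lem6.2}, proved via Kempf vanishing and the $\mathcal{O}(1)$-basis of \cref{lem4.1}). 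With this ambient embedding and Picard isomorphism in hand, the argument of \cite[Theorem 3.1]{BHK} applies and yields ampleness; your proposal has no substitute for this ingredient, so the ``base direction'' problem you correctly identify is never actually resolved.

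Two smaller points. For the nef statement your outline (degeneration of curves to $T$-invariant ones, applied on $\mathbb{P}(\mathcal{E})$) is essentially the paper's: the nef part is quoted from the argument of \cite[Theorem 3.1]{BHK} and, as the paper remarks, holds for any projective $T$-variety, so no curve classification on $\widetilde{Z_{w}}$ and no reducedness are needed there. Also, your aside that $\widetilde{\pi}\colon \widetilde{Z_{w}}\to G/B$ ``need not admit a section'' is incorrect: the point $[1,\ldots,1]\in Z_{w}$ is $B$-fixed, so $gB\mapsto [g,[1,\ldots,1]]$ is a $G$-equivariant section; this does not, however, repair the main gap, since positivity along a section and along the fibres does not give ampleness.
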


The layout of our article is as follows: In \cref{sec2} we introduce some notations and preliminaries. In \cref{sec3} we study BSDH varieties and prove \cref{thm4.16} and \cref{cor4.18}. In \cref{sec4} we study $G$-BSDH varieties and prove \cref{lem4.1}, \cref{thm3.1}, \cref{thm3.5} and \cref{lem4.8}. In \cref{sec5} we prove \cref{thm5.7}. In \cref{sec6} we prove \cref{thm6.3}.

		\section{Notations and Preliminaries}\label{sec2}
		
		In this section we recall some notations and preliminaries. For details on 
		algebraic groups, Lie algebras and Bott-Samelson-Demazure-Hansen varieties \cite{BK05}, \cite{Hum1}, \cite{Hum2}, \cite{Jan} and \cite{Spr} 
		are referred.
		
		We denote the set of roots of $G$ with respect to $T$ by $R$. Let $B^{+}$ be a Borel subgroup of $G$ 
		containing $T$. The Borel subgroup of $G$ opposite to $B^{+}$ determined by $T$ is denoted by
		$B$, in other words, $B\,=\,n_{0}B^{+}n_{0}^{-1}$, where $n_{0}$ 
		is a representative in $N_{G}(T)$ of the longest element $w_{0}$ of $W$. Let $R^{+}\,\subset\, R$ be the set of positive roots of $G$ with 
		respect to the Borel subgroup $B^{+}$.
		The set of positive roots of $B$ is equal 
		to the set $R^{-} \,:=\, -R^{+}$ of negative roots. Let $S \,=\, \{\alpha_1,\,\ldots,\,\alpha_n\}$ denote the set of simple roots in $R^{+},$
		where $n$ is the rank of $G.$ The simple reflection in $W$ corresponding to $\alpha_i$ is denoted by $s_{i}.$
		
		Let $X(T)$ (respectively, $Y(T)$) denote the group of the characters (respectively, co-characters) of $T.$ Let $\langle \cdot, \cdot \rangle: X(T)\times Y(T)\longrightarrow \mathbb{Z}$ be the natural bilinear perfect pairing. Let ${X(T)}_{\mathbb{R}}:=X(T)\otimes _{\mathbb{Z}}\mathbb{R}$ and ${Y(T)}_{\mathbb{R}}:=Y(T)\otimes_{\mathbb{Z}}\mathbb{R}.$ We also denote the induced bilinear pairing  ${X(T)}_{\mathbb{R}}\times {Y(T)}_{\mathbb{R}}\longrightarrow \mathbb{R}$ by $\langle \cdot, \cdot\rangle.$ Let $(\cdot, \cdot)$ be a positive definite $W$-invariant symmetric bilinear form on ${X(T)}_{\mathbb{R}}.$ For a simple root $\alpha,$ the simple reflection $s_{\alpha}$ satisfies $s_{\alpha}(\lambda)=\lambda-\frac{2(\lambda,\alpha)}{(\alpha,\alpha)}\alpha$  for all $\lambda\in {X(T)}_{\mathbb{R}}.$ 
		For a root $\alpha\in R^{+},$ we define the co-root $\alpha^{\vee}\in {Y(T)}_{\mathbb{R}}$ by $\langle \lambda, \alpha^{\vee}\rangle:=\frac{2(\lambda,\alpha)}{(\alpha,\alpha)}.$ Let $X(T)^+$ denote the set of dominant characters of $T$ with respect to $B^{+}$ (i.e., $\lambda$ such that $\langle \lambda, \alpha_{i}^{\vee}\rangle\in \mathbb{Z}_{\ge 0}$ for all $1\le i\le n$). For any simple root $\alpha_{i}$, we denote the fundamental weight
		corresponding to $\alpha_{i}$ by $\varpi_{i},$ i.e., $\langle \varpi_{i}, \alpha_{j}^{\vee}\rangle=\delta_{ij},$ where $\delta_{ij}$ denotes the Kronecker delta. Let $\rho$ be the half sum of all 
		positive roots of $G.$

		\subsection{BSDH-variety} 
		
		We recall the definition of BSDH-vartiety and some of its properties following Lauritzen and Thomsen \cite{LT}. 
		
		For a simple root $\alpha \,\in\, S,$ let $n_{\alpha}\,\in\, N_{G}(T)$ be a representative of $s_{\alpha}.$ The
		unique minimal parabolic subgroup of $G$ containing $B$ and $n_{\alpha}$ is denoted by $P_{\alpha}.$
		
		We recall that the Bott-Samelson-Demazure-Hansen variety (BSDH-variety for short) corresponding to a sequence $w=\,(s_{i_{1}},\,s_{i_{2}},\,\ldots,\,s_{i_{r}})$
		of simple reflections in $W$ is defined by  $Z_{w}=P_{w}/B^{r},$ where $P_{w}=P_{\alpha_{i_{1}}}\times\cdots \times P_{\alpha_{i_{r}}}$ and $B^{r}$ acts from the right on $P_{w}$ as $(p_{1} ,\ldots , p_{r} )(b_1,\ldots ,b_{r} )=(p_1b_1, b_{1}^{-1}p_{2}b_{2}, b_{2}^{-1}p_{3}b_{3} , \ldots, b_{r-1}^{-1}p_{r}b_{r}).$
		
		We may also write 
		$Z_{w}\,=\,
		P_{\alpha_{i_{1}}}\times^{B} P_{\alpha_{i_{2}}}\times^{B}\cdots \times^{B} 
		P_{\alpha_{i_{r}}}/B,$
		where $X\times^{B} Y$ is the quotient $(X\times Y )/B$ with $B$ acting as $(x, y)b=(xb, b^{-1}y)$
		for all $x\in X, y\in Y$ and $b\in B.$ The equivalence class of $(x,y)$ is denoted by $[x,y].$ There is a natural action of $B$ on $Z_{w}$ given by the multiplication on the left, i.e., $b\cdot [p_{1},\ldots, p_{r}]=[bp_{1},\ldots, p_{r}]$ for $b\in B$ and $[p_{1},\ldots, p_{r}]\in Z_{w}.$
		
		For $1\le j\le r,$ let $w[j]$ denote the truncated sequence
		$(s_{i_{1}},\ldots, s_{i_{r-j}}).$ By convention $Z_{w[r]}$ denotes a $1$-point space. Then  $Z_{w}$ comes as the sequence
		$Z_{w} \to Z_{w[1]}\to \cdots\to  Z_{w[r-2]}=P_{\alpha_{i_{1}}}\times^{B} P_{\alpha_{i_{2}}}/B \to Z_{w[r-1]}=P_{\alpha_{i_{1}}}/B$
		of successive $\mathbb{P}^{1}$-fibrations. In general, we let $\pi_{w[j]}$ denote the natural morphism $Z_{w}\to  Z_{w[j]}$ in the sequence of $\mathbb{P}^{1}$-bundles above.
		
		For a sequence $w=\,(s_{i_{1}},\,s_{i_{2}},\,\ldots,\,s_{i_{r}})$
		of simple reflections, we denote the element $\,s_{i_{1}}s_{i_{2}}\cdots s_{i_{r}}\in W$ also by $w.$ We say a sequence $w=(s_{i_{1}},\,s_{i_{2}},\,\ldots,\,s_{i_{r}})$
		of simple reflections is reduced if $w=s_{i_{1}}s_{i_{2}}\cdots s_{i_{r}}$ is a reduced expression in $W$ with respect to the set $\{s_{i}| 1\le i\le n\}$ of simple reflections. 
		
		We note that for any sequence $w=(s_{i_{1}},\,s_{i_{2}},\,\ldots,\,s_{i_{r}})$ of simple reflections, $Z_{w}$ is a smooth projective
		variety. Define $w[0]:=w.$ Then for each integer $0\le j\le r,$ $Z_{w[j]}$  comes with a $B$-equivariant morphism
		\begin{equation}\label{e1}
			\varphi_{w[j]}\,:\, Z_{w[j]}\,\longrightarrow\, G/B
		\end{equation}
		defined by $[p_{1},\,\cdots ,\, p_{j}]\, \longmapsto\, p_{1}\cdots p_{j}B.$ In particular, when $w$ is a reduced sequence of simple reflections, the natural map
		\begin{equation*}
			\varphi_{w}=\varphi_{w[0]}\,:\, Z_{w}\,\longrightarrow\, G/B
		\end{equation*} induces a
		birational surjective morphism from $Z_{w}$ to $X(w).$

		\subsection{Equivariant bundle on $G/B$ and $Z_{w}$}
		
		Let $\mathcal{O}_{Z_{w}}(V)$ denote the locally free sheaf of sections of the associated vector bundle $P_{w} \times^{B^{r}} V$ on $Z_{w},$
		where $V$ is a finite dimensional $B^{r}$-representation. We view a $B$-representation $V$ as a $B^{r}$-representation by letting $B^{r}$ act on $V$ as $(b_1, b_{2},\ldots,b_r)\cdot v= b_r\cdot v,$ where $v\in V, b_{i}\in B.$ With this convention we get for a $B$-character $\lambda\in X(B)$ that $\mathcal{O}_{Z_{w}}(\lambda)=\mathcal{O}_{Z_{w}}(0,\cdots, 0, \lambda).$
		
		For a $B$-module $V,$ let $\mathcal{L}(V)$ denote the locally free sheaf on $G/B$ corresponding to the vector bundle  $G\times^{B} V\longrightarrow G/B,$ where the action of $B$ is given by $(g, v)\cdot b=(gb, b^{-1}v)$ for $b\in B, g\in G$ and $v\in V.$  When $V$ has rank $1,$ associated to
		a $B$-character $\lambda$, the sheaf $\mathcal{L}(\lambda)= \mathcal{L}(V)$ is a line bundle. It is well known that the map $X(B)\longrightarrow {\rm Pic}(G/B):$ $\lambda\mapsto \mathcal{L}(\lambda)$ is a bijection. Moreover,   $\mathcal{L}(\lambda)$ is globally generated (respectively, ample) exactly when $\lambda$ is dominant (respectively, regular) i.e., $\langle \lambda, \alpha_{i}^\vee\rangle\ge 0$ (respectively, $\langle \lambda,\alpha_{i}^{\vee} \rangle>0$) for all $1\le i\le n$. The pull back of $\mathcal{L}(V)$ to $Z_{w}$ under the morphism $\varphi_{w}: Z_{w}\longrightarrow G/B$ satisfies $\varphi_{w} ^{*}\mathcal{L}(V)=\mathcal{O}_{Z_{w}}(V).$ In particular,  for $\lambda\in X(B),$  $\varphi_{w}^{*}\mathcal{L}(\lambda)=\mathcal{O}_{Z_{w}}(\lambda).$

		\subsection{Picard group of BSDH-variety}
		For any $1\,\le\, j\,\le\, r,$ let $w(j)=(s_{i_{1}} ,\ldots,\widehat{s_{i_{j}}},\ldots, s_{i_{r}}).$ The natural embedding $P_{\alpha_{i_{1}}}\times \cdots \times P_{\alpha_{i_{j-1}}}\times B\times P_{\alpha_{i_{j+1}}}\times \cdots \times P_{\alpha_{i_{r}}}\hookrightarrow P_{\alpha_{i_{1}}}\times \cdots \times P_{\alpha_{i_{j}}}\times \cdots \times P_{\alpha_{i_{r}}}$ induces a closed embedding $\sigma_{w,j}: Z_{w(j)}\to Z_{w},$ which makes $Z_{w(j)}$ into a $B$-invariant divisor of $Z_{w}.$ The divisor $\partial Z_{w}=\bigcup_{j=1}^r Z_{w(j)}$ is a normal crossing divisor in $Z_{w}.$
		
		Let $V_{\alpha_{i_{r}}}=H^{0}(P_{\alpha_{i_{r}}} /B, \mathcal{L}(\varpi_{i_{r}}))$ denote the natural two dimensional $B$-module. Then we have the following exact sequence of $B$-modules,
		\begin{equation*}
			0\to L_{s_{i_{r}}(\varpi_{i_{r}})}\to V_{\alpha_{i_{r}}}\to L_{\varpi_{i_{r}}}\to 0,
		\end{equation*}	
		where $L_{\lambda}$ denotes the one-dimensional representation of $B$ with character $\lambda.$
		This gives an exact sequence 
		\begin{equation*}
			0\to \mathcal{L}(s_{i_{r}}(\varpi_{i_{r}}))\to \mathcal{L}(V_{\alpha_{i_{r}}})\to  \mathcal{L}(\varpi_{i_{r}}) \to 0
		\end{equation*}
	of vector bundles on $G/B.$
		Pulling back by $\varphi_{w[1]}$, this induces a sequence 
		\begin{equation*}
			0\to \mathcal{O}_{Z_{w[1]}}(s_{i_{r}(\varpi_{i_{r}})})\to \mathcal{O}_{Z_{w[1]}}(V_{\alpha_{i_{r}}})\to  \mathcal{O}_{Z_{w[1]}}(\varpi_{i_{r}}) \to 0
		\end{equation*}
	of vector bundles on $Z_{w[1]}.$
		Then $Z_{w}$ is identified with the projective bundle $\mathbb{P}(\mathcal{O}_{Z_{w[1]}}(V_{\alpha_{i_{r}}})).$ Thus the corresponding universal quotient bundle is $\mathcal{O}_{Z_{w[1]}}(\varpi_{i_{r}}).$  So we define $\mathcal{O}_{Z_{w}}(1):=\mathcal{O}_{Z_{w}}(\varpi_{i_{r}}),$ and write $\mathcal{O}_{Z_{w}}(m):= \mathcal{O}_{Z_{w}}(1)^{\otimes m}.$ Note that $\mathcal{O}_{Z_{w}}(1)$ is a line bundle on $Z_{w}$ with degree one along the fibers of $\pi_{w[1]}.$
		
		The map $\pi_{w[1]}$ is a $P_{\alpha_{i_{r}}}/B\,\simeq\, \mathbb{P}^1$-fibration. So, for any line bundle $\mathcal{L}$ on $Z_{w},$ there is a unique line bundle $\mathcal{M}$ on $Z_{w[1]},$ and $m\in \mathbb{Z}$ such that $\mathcal{L}=\mathcal{O}_{Z_{w}}(m)\otimes \pi_{w[1]}^{*}\mathcal{M},$ where 	$\deg(\mathcal{L}_{y})=m$ for some (hence for all) $y \in Z_{w[1]}.$  Thus, we have
		${\rm Pic}(Z_{w})\,\simeq\,\mathbb{Z}\mathcal{O}_{Z_{w}}(1)\oplus {\rm Pic}(Z_{w[1]}).$ Moreover, by using induction on the number of $\mathbb{P}^{1}$-fibrations, the Picard group ${\rm Pic}(Z_{w})$ is a free abelian group of rank $r$ and the line bundles
		$\mathcal{O}_{Z_{w[j]}}(1)$ for $1\le j\le r$ forms a basis of ${\rm Pic}(Z_{w}),$ where we write $\mathcal{O}_{Z_{w[j]}}(1)$ instead of the pull back $\pi_{w[j]}^{*}\mathcal{O}_{Z_{w[j]}}(1).$ This basis is usually called the $\mathcal{O}(1)$-basis of $Z_{w}.$ For $(m_{1},m_{2},\ldots,m_{r})\in \mathbb{Z}^{r},$ we denote the line bundle $\mathcal{O}_{Z_{w}}(m_{r})\otimes \mathcal{O}_{Z_{w[1]}}(m_{r-1})\otimes \cdots \otimes \mathcal{O}_{Z_{w[r-1]}}(m_{1})$  simply by $\mathcal{O}_{Z_{w}}(m_{1},\ldots, m_{r-1},m_{r}).$

		\subsection{$G$-BSDH-variety}
		
		Let $w=(s_{i_{1}},\ldots, s_{i_{r-1}}, s_{i_{r}})$ be a sequence of simple reflections (not necessarily reduced) and $Z_{w}$ be as defined above. Note that there is a natural action of $B$ on $Z_{w}$ given by the multiplication on the left. Recall that for each integer $0\le j\le r,$ $Z_{w[j]}$  comes with a $B$-equivariant morphism
		\begin{equation*}
			\varphi_{w[j]}\,:\, Z_{w[j]}\,\longrightarrow\, G/B
		\end{equation*} (see \eqref{e1}). For each integer $0\le j\le r,$ consider the variety $\widetilde{Z_{w[j]}}:= G\times^{B} Z_{w[j]}.$ There is natural action of $G$ on $\widetilde{Z_{w[j]}}$ given by the multiplication of the left.
	
	Then for each integer $0\le j\le r,$ $\varphi_{w[j]}$ induces a natural $G$-equivariant map
		\begin{equation}\label{eq2.2}
			\widetilde{\varphi_{w[j]}}: \widetilde{Z_{w[j]}} \longrightarrow G/B\times G/B
		\end{equation}  given by $$[g,x]\mapsto [gB, g\varphi_{w[j]}(x)]$$
where $g\in G,$ $x\in Z_{w[j]}$ and the action of $G$ on $G/B \times G/B$ is given by the diagonal action.		
		
	    $\widetilde{Z_{w}}$ comes as the sequence
		$$\widetilde{Z_{w}}\to \widetilde{Z_{w[1]}}\to \cdots \to \widetilde{Z_{w[r-2]}} \to \widetilde{Z_{w[r-1]}} =G\times^{B}P_{\alpha_{i_{1}}}/B\to \widetilde{Z_{w[r]}}=G/B$$
		of successive $\mathbb{P}^1$-fibrations.
		For $1\le j\le r,$  $\pi_{w[j]}$ induces a natural $G$-equivariant morphism $\widetilde{\pi_{w[j]}}: \widetilde{Z_{w}}\longrightarrow \widetilde{Z_{w[j]}}.$

		\section{Line bundles on BSDH-varieties}\label{sec3}
		In this section we prove that every nef line bundle on $Z_{w}$ is globally generated. Then we give necessary and sufficient conditions for $Z_{w}$ to be Fano (weak-Fano). 
		
		Let $w=(s_{i_{1}},s_{i_{2}},\ldots, s_{i_{r}})$ be an arbitrary sequence of simple reflections and $Z_{w}$ be the BSDH-variety  corresponding to $w.$

		\begin{proof}[{Proof of \cref{thm4.16}}]
			Assume that $\mathcal{L}$ is globally generated. Then by \cite[Example 1.4.5., p.52]{L} it follows that $\mathcal{L}$ is nef. 
			
			Conversely, suppose that $\mathcal{L}$ is nef. We prove the converse by induction on $r,$ which is the dimension of $Z_{w}.$
			Recall that by \cite[Section 3.1, p.464]{LT}  $\mathcal{L}$ is of the form $$\mathcal{O}_{Z_{w}}(m_{1},m_{2},m_{3},\ldots,m_{r})$$ for some $(m_{1},m_{2},m_{3},\ldots,m_{r})\in \mathbb{Z}^{r}.$  Then by \cite[Lemma 2.1, p.463]{LT}, we have $$\sigma_{w,1}^{*} \mathcal{L}=\mathcal{O}_{Z_{w(1)}}(m_2,m_{3},\ldots, m_{r}).$$ Further, note that by \cite[Example 1.4.4, p.51]{L} $\sigma_{w,1}^{*} \mathcal{L}$ is nef. So, by induction $\sigma_{w,1}^{*} \mathcal{L}$ is globally generated on $Z_{w(1)}.$ Hence, by \cite[Corollary 3.3., p.465]{LT} it follows that $m_2,\ldots, m_r\ge 0,$  Furthermore, \cite[Lemma 2.1, p.463]{LT} also gives $$\sigma_{w,2}^{*}\mathcal{L}=\mathcal{O}_{Z_{w(2)}}(m_1,m_3,\dots,m_r)\otimes \pi_{w(2)[r-2]}^{*}(\mathcal{L}_{w(2)[r-2]}(m_2 \varpi_{i_{2}})).$$  
			Now, note that $w(2)[r-2]=(s_{i_{1}})$ and hence $Z_{w(2)[r-2]}=P_{\alpha_{i_{1}}}/B\simeq \mathbb{P}^{1},$ we identify $\mathcal{L}_{w(2)[r-2]}(\varpi_{i_{2}})$ with $\mathcal{O}_{\mathbb{P}^{1}}(\langle \varpi_{i_{2}},\alpha_{i_{1}}^{\vee}\rangle).$ When $\alpha_{i_{1}}\neq \alpha_{i_{2}},$ 
			$\pi_{w(2)[r-2]}^{*}\mathcal{L}_{w(2)[r-2]}(\varpi_{i_{2}})$ is trivial line bundle. Hence, by \cite[Example 1.4.4, p.51]{L} $\sigma_{w,2}^{*}\mathcal{L}=\mathcal{O}_{Z_{w(2)}}(m_1,m_3,\dots,m_r)$ is nef. So, by induction $\sigma_{w,2}^{*}\mathcal{L}=\mathcal{O}_{Z_{w(2)}}(m_1,m_3,\dots,m_r)$ is globally generated. Therefore, by \cite[Corollary 3.3, p.465]{LT} we get that $m_{1}\ge 0.$
			If $\alpha_{i_{1}}=\alpha_{i_{2}},$ then
			$Z_{w}\simeq P_{\alpha_{i_{1}}}/B \times Z_{w(1)}\simeq \mathbb{P}^{1} \times Z_{w(1)}.$
			Under this isomorphism $\mathcal{L}$ identifies with $\mathcal{O}_{\mathbb{P}^{1}}(m_1) \boxtimes \mathcal{O}_{Z_{w(1)}}(m_2,\ldots, m_r).$ This
			proves that $m_1\ge 0.$
		\end{proof}

		Now we recall some definitions and results of  big line bundles on a projective variety following Lazersfeld \cite{L}.
		
		Let  $X$ be an irreducible projective variety. Let $\mathcal{L}$ be a line bundle on $X.$ 
		\begin{definition}
			The {\it semigroup} of $\mathcal{L}$ consists of those non-negative powers of $\mathcal{L}$ that have a non-zero section:
			$$N(\mathcal{L}) = N(X, \mathcal{L}) =\{ m\ge 0~ | ~H^0 (X, \mathcal{L}^{\otimes m})\neq 0\}.$$ 
			
			 For a Cartier divisor $D$ one takes the {\it semigroup} $N(X, D):=N(X,\mathcal{O}_{X}(D)).$
			
		\end{definition}
		
		Given $m\in N(X,\mathcal{L}),$ consider the rational mapping $\phi_{m}: X \dashrightarrow \mathbb{P}(H^{0}(X, \mathcal{L}^{\otimes m})).$ We denote by $\phi_{m}(X)$ the closure of its image in  $\mathbb{P}(H^{0}(X, \mathcal{L}^{\otimes m}),$ i.e., the image of the closure of the graph of $\phi_{m}.$

		\begin{definition}[Iitaka Dimension]
			Assume that $X$ is a normal projective variety.	Then the	
			{\it Iitaka dimension} of $\mathcal{L}$ is defined to be
			$$\kappa(\mathcal{L}) = \kappa(X,\mathcal{L})= \begin{matrix} \text{max} & \{\dim \phi_{m}(X)\} ~\\ 
				m\in N(\mathcal{L}) & \end{matrix}$$	
			provided that $N(\mathcal{L})\neq 0.$ If $H^0 (X, \mathcal{L}^{\otimes m} )= 0$ for all $m > 0,$ one puts $\kappa(X,\mathcal{L})=-\infty.$ If $X$ is non-normal, pass to its normalization $\nu :X'\to X$ and set $\kappa(X,\mathcal{L})= \kappa(X, \nu ^{*} \mathcal{L}).$ 
			Finally, for a Cartier divisor $D$ one takes $\kappa(X, D) :=\kappa(X, \mathcal{O}_{X}(D)).$  
			
		\end{definition}
		Thus, either $\kappa(X, \mathcal{L})=-\infty,$ or else $0\le \kappa(X, \mathcal{L})\le \dim X.$

		\begin{definition}[Big]
			A line bundle $\mathcal{L}$ on the irreducible projective variety $X$ is {\em big} if $\kappa(X, \mathcal{L}) = \dim (X).$ A Cartier divisor $D$ on $X$ is {\em big} if $\mathcal{O}_{X}(D)$ is {\em big}.	
		\end{definition}
		
		\begin{lemma}\label{lem3.3}
			Let $D$ be an effective Cartier divisor on a projective variety
			$X$ and let $U:= X- {\rm Supp}(D).$ If $U$ is affine, then the line bundle $\mathcal{O}_{X}(D)$ on $X$ is big.
		\end{lemma}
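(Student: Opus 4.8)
The plan is to exhibit, for a suitable $m\ge 1$, a rational map $\phi_m\colon X\dashrightarrow \mathbb{P}(H^0(X,\mathcal{O}_X(mD)))$ whose image has dimension $\dim X$; by the definition of Iitaka dimension this gives $\kappa(X,\mathcal{O}_X(D))=\dim X$, i.e. $\mathcal{O}_X(D)$ is big. Since $\kappa$ (hence bigness) is defined through the normalization, I would first reduce to the case $X$ normal: if $\nu\colon X'\to X$ is the normalization, then $\nu$ is finite, so $\nu^{-1}(U)$ is affine, and $\nu^{*}\mathcal{O}_X(D)=\mathcal{O}_{X'}(\nu^{*}D)$ with $\operatorname{Supp}(\nu^{*}D)=\nu^{-1}(\operatorname{Supp}(D))=X'\setminus\nu^{-1}(U)$; thus it suffices to treat $X'$. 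So assume $X$ is normal irreducible of dimension $n$, and let $s\in H^0(X,\mathcal{O}_X(D))$ be the canonical section, so that $U=X_s$ is its non-vanishing locus.

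The key input is the standard fact that a regular function on $X_s$ has poles of bounded order along $D$: restriction followed by division by powers of $s$ identifies, inside the function field $k(X)$,
\[
\mathcal{O}(U)\;=\;\bigcup_{m\ge 0}s^{-m}\,H^0\bigl(X,\mathcal{O}_X(mD)\bigr),
\]
where injectivity uses that $U$ is dense in the variety $X$ (see, e.g., \cite{L}). Since $U$ is affine of finite type over $k$, the ring $\mathcal{O}(U)$ is a finitely generated $k$-algebra; as the union on the right is increasing in $m$ (because $t/s^m=ts/s^{m+1}$ and $ts\in H^0(X,\mathcal{O}_X((m+1)D))$), each generator lies in some term, so there is an $m\ge 1$ with $\mathcal{O}(U)=R_m$, where $R_m\subseteq\mathcal{O}(U)$ is the $k$-subalgebra generated by $s^{-m}H^0(X,\mathcal{O}_X(mD))$.

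Fix such an $m$ and a basis $t_0,\dots,t_N$ of $H^0(X,\mathcal{O}_X(mD))$ (it is nonzero, as it contains $s^m$), so that $\phi_m=[t_0:\cdots:t_N]$. Then $R_m=k[t_0/s^m,\dots,t_N/s^m]$ in $k(X)$, while the function field $k(\phi_m(X))$ is generated over $k$ by the ratios $t_j/t_i$ for an index $i$ with $t_i$ not identically zero on $\phi_m(X)$; since $s^m$ is a linear combination of the $t_j$, the elements $\{t_j/s^m\}$ and $\{t_j/t_i\}$ generate the same subfield of $k(X)$, hence $\operatorname{Frac}(R_m)=k(\phi_m(X))$. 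Therefore
\[
\dim\phi_m(X)=\operatorname{trdeg}_k k(\phi_m(X))=\operatorname{trdeg}_k\operatorname{Frac}(R_m)=\operatorname{trdeg}_k\operatorname{Frac}(\mathcal{O}(U))=\dim U=n .
\]
Since $\dim\phi_m(X)\le\kappa(X,\mathcal{O}_X(D))\le\dim X=n$, we get $\kappa(X,\mathcal{O}_X(D))=\dim X$, which is precisely the claim.

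The step I expect to require the most care is the identification $\mathcal{O}(U)=\bigcup_m s^{-m}H^0(X,\mathcal{O}_X(mD))$ (the bounded pole order statement) and the accompanying bookkeeping that converts $\dim\phi_m(X)$ into the transcendence degree of $R_m$; once $X$ has been replaced by its normalization, everything else is essentially formal.
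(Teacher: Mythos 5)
Your argument is correct: the reduction to the normalization, the identification $\mathcal{O}(U)=\bigcup_{m\ge 0}s^{-m}H^{0}(X,\mathcal{O}_{X}(mD))$, and the transcendence-degree computation showing $\dim\phi_m(X)=\dim X$ for $m$ large all go through, so $\kappa(X,\mathcal{O}_X(D))=\dim X$. The paper gives no argument of its own for \cref{lem3.3} — it simply cites \cite[Lemma 2.4]{Bri} — and your proof is essentially the standard argument behind that cited lemma, so there is nothing substantive to compare.
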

		\begin{proof}
			See \cite[Lemma 2.4, p.61]{Bri}.
		\end{proof}
		
		It is well known that the anti-canonical line bundle of $Z_{w}$ is big. In \cite[Corollary 5.7, p.13]{BKS-1}, \cite[Proof of Theorem 5.3, p.2558]{Cha} this fact is stated for a reduced sequence $w$ of simple reflections but the proof works out for any sequence $w$ of simple reflections. For the sake of completeness we give a proof here.
		
		\begin{proposition}\label{prop4.15}
			The anti-canonical line bundle of $Z_{w}$ is big. 
		\end{proposition}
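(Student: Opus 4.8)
Here is how I would approach \cref{prop4.15}.

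The plan is to write $\omega_{Z_w}^{-1}\cong\mathcal{O}_{Z_w}(\partial Z_w)\otimes\mathcal{M}$ with $\mathcal{O}_{Z_w}(\partial Z_w)$ big and $\mathcal{M}$ nef, and then to combine \cref{lem3.3} with the fact that a big line bundle stays big after tensoring by a nef line bundle (see e.g.\ \cite{L}). For the first factor: in the description $Z_w = P_{\alpha_{i_1}}\times^{B}\cdots\times^{B}P_{\alpha_{i_r}}/B$, the boundary component $Z_{w(j)}$ is exactly the locus where the $j$-th coordinate lies in $B$, i.e.\ equals $eB$ in $P_{\alpha_{i_j}}/B\cong\mathbb{P}^1$. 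Hence $Z_w\setminus\partial Z_w=Z_w\setminus\bigcup_{j=1}^{r}Z_{w(j)}$ is the locus where every coordinate lies in the open cell $Bs_{\alpha_{i_j}}B/B\cong\mathbb{A}^1$, and writing the $j$-th coordinate as $u_j\,n_{\alpha_{i_j}}$ with $u_j$ in the one-parameter root subgroup of $\alpha_{i_j}$ identifies $Z_w\setminus\partial Z_w$ with $\mathbb{A}^{r}$; in particular it is affine. (For reduced $w$ this is the big Schubert cell $BwB/B$ transported along $\varphi_w$, but the description uses no hypothesis on $w$.) As $\partial Z_w$ is an effective Cartier divisor on the smooth projective variety $Z_w$ with affine complement, \cref{lem3.3} gives that $\mathcal{O}_{Z_w}(\partial Z_w)$ is big.

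For the factor $\mathcal{M}$ I would recall the description of the anti-canonical bundle. Running down the tower $Z_w=Z_{w[0]}\to Z_{w[1]}\to\cdots\to Z_{w[r]}=\mathrm{pt}$ of $\mathbb{P}^1$-bundles, adjunction gives $\omega_{Z_w}\cong\bigotimes_{j=1}^{r}\pi_{w[j-1]}^{*}\,\omega_{Z_{w[j-1]}/Z_{w[j]}}$; since $Z_{w[j-1]}=\mathbb{P}\bigl(\mathcal{O}_{Z_{w[j]}}(V_{\alpha_{i_{r-j+1}}})\bigr)$ over $Z_{w[j]}$ (by the $\alpha_{i_{r-j+1}}$-analogue of the exact sequence recalled in \cref{sec2}), the relative dualizing sheaf at the $j$-th stage is $\mathcal{O}_{Z_{w[j-1]}}(-2)$ tensored with the pullback of $\det\mathcal{O}_{Z_{w[j]}}(V_{\alpha_{i_{r-j+1}}})=\mathcal{O}_{Z_{w[j]}}(2\varpi_{i_{r-j+1}}-\alpha_{i_{r-j+1}})$. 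Assembling these contributions — and using that at each stage the newly introduced boundary component is a section of the corresponding $\mathbb{P}^1$-bundle while the older ones are pulled back — one rearranges the product into
$$\omega_{Z_w}^{-1}\;\cong\;\mathcal{O}_{Z_w}(\partial Z_w)\otimes\mathcal{M},$$
with $\mathcal{M}$ a globally generated line bundle. This is precisely the identity of \cite[Corollary~5.7]{BKS-1} and \cite[proof of Theorem~5.3]{Cha}; since its derivation uses only the $\mathbb{P}^1$-bundle tower, the exact sequences of \cref{sec2}, and adjunction, it is valid for an arbitrary (not necessarily reduced) sequence $w$.

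Granting these two facts, the proof concludes at once: $\mathcal{M}$ is globally generated, hence nef, so $\omega_{Z_w}^{-1}=\mathcal{O}_{Z_w}(\partial Z_w)\otimes\mathcal{M}$ is big by the product statement quoted above. The point needing most care is the one flagged in the text: that the anti-canonical computation of \cite{BKS-1} and \cite{Cha} survives the passage from a reduced to an arbitrary sequence $w$. This is really bookkeeping — every ingredient (the tower of $\mathbb{P}^1$-bundles, the relative dualizing sheaves, and the identification of which boundary component is a section at each stage) depends only on the sequence of simple reflections, not on its reducedness — but it is the step where one actually has to be attentive.
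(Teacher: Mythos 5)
Your proposal is correct and follows essentially the same route as the paper: the same decomposition $\omega_{Z_w}^{-1}\cong\mathcal{O}_{Z_w}(\partial Z_w)\otimes\mathcal{M}$ with $\mathcal{M}$ the globally generated pullback $\mathcal{O}_{Z_w}(\rho)$ (the paper quotes this identity from Mehta--Ramanathan and Brion--Kumar rather than re-deriving it through the $\mathbb{P}^1$-tower), the same identification of $Z_w\setminus\partial Z_w$ with the affine cell $\prod_{j}U_{\alpha_{i_j}}\cong\mathbb{A}^r$, and \cref{lem3.3} to get bigness of $\mathcal{O}_{Z_w}(\partial Z_w)$. The only cosmetic difference is your final step (big tensor nef) versus the paper's (big tensor effective), both standard and valid.
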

		\begin{proof}
			The anti-canonical line bundle $\omega_{Z_{w}}^{-1}$ of $Z_{w}$ is isomorphic to
			$\mathcal{O}_{Z_{w}}(\sum\limits_{j=1}^{r}Z_{w(j)}) \otimes \mathcal{O}_{Z_{w}}(\rho)$ (cf. \cite[Proof of Proposition 10, p.37]{MR85},
			\cite[Proposition 2.2.2, p.67]{BK05}).

			Let $s$ be a unique section (up to non-zero constants) of $\mathcal{O}_{Z_{w}}(\sum_{j=1}^{r}Z_{w(j)})$ such that ${\rm div}(s)=\sum_{j=1}^{r}Z_{w(j)}.$ Note that ${\rm div}(s)$ is an effective Cartier divisor on $Z_{w}$ such that the support $\text{Supp}({\rm div}(s))=\bigcup_{k=1}^{r} Z_{w(j)},$  Since $P_{\alpha_{i_{j}}}=B\sqcup Bs_{i_{j}}B$ for every $1\le j\le r,$ we have $$Z_{w}- \text{Supp}({\rm div}(s))=\frac{Bs_{i_{1}}B\times Bs_{i_{2}}B\times\cdots\times Bs_{i_{r}}B}{B\times B\times\cdots \times B}.$$
			
			Further, $Z_{w}- \text{Supp}({\rm div}(s))$ is isomorphic to $\prod_{j=1}^{r}U_{\alpha_{i_{j}}},$  an affine $r$-space (see \cite[Chapter 2, Section 2.2, p.65]{BK05}). Now since $Z_{w}$ is a smooth projective variety such that $Z_{w}- \text{Supp}({\rm div}(s))$ is affine, by \cref{lem3.3} ${\rm div}(s)$ is big. So, the line bundle $\mathcal{O}_{Z_{w}}({\rm div}(s))$ is big. 
			
			On the other hand, $\mathcal{O}_{Z_{w}}(\rho)$ is globally generated being pullback of the globally generated line bundle $\mathcal{L}(\rho)$ on $G/B.$ In particular, $\mathcal{O}_{Z_{w}}(\rho)$ is the line bundle corresponding to some effective divisor. Since $\omega_{Z_{w}}^{-1}=\mathcal{O}_{Z_{w}}({\rm div}(s))\otimes \mathcal{O}_{Z_{w}}(\rho),$ it follows that $\omega_{Z_{w}}^{-1}$ is big.
			
		\end{proof}
		
		\begin{corollary}\label{cor3.6} The followings are equivalent.
			\begin{itemize}
				\item [(i)] $Z_{w}$ is weak-Fano
				
				\item [(ii)]  $\omega_{Z_{w}}^{-1}$ is globally generated.
				
				\item [(iii)] $\omega_{Z_{w}}^{-1}$ is nef.
			\end{itemize}
			
		\end{corollary}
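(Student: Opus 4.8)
The plan is to run a short cycle of implications (i) $\Rightarrow$ (iii) $\Rightarrow$ (ii) $\Rightarrow$ (i), feeding off the two substantial results already in place: \cref{thm4.16} (nef $\Leftrightarrow$ globally generated for line bundles on $Z_{w}$) and \cref{prop4.15} (the anti-canonical bundle $\omega_{Z_{w}}^{-1}$ is big). Since all the genuine content lives in those two statements, the corollary itself is essentially a bookkeeping argument, and I expect no real obstacle; the only point requiring a little care is making sure the ``big'' hypothesis is carried along correctly so that ``nef'' upgrades to ``nef and big'' = weak-Fano.

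For (i) $\Rightarrow$ (iii): by definition $Z_{w}$ is weak-Fano means $\omega_{Z_{w}}^{-1}$ is nef and big; in particular it is nef. For (iii) $\Rightarrow$ (ii): apply \cref{thm4.16} to the line bundle $\mathcal{L}=\omega_{Z_{w}}^{-1}$, which is nef by hypothesis, to conclude it is globally generated. For (ii) $\Rightarrow$ (i): a globally generated line bundle is nef (e.g.\ \cite[Example 1.4.5., p.52]{L}, as already used in the proof of \cref{thm4.16}), and by \cref{prop4.15} $\omega_{Z_{w}}^{-1}$ is big; hence $\omega_{Z_{w}}^{-1}$ is nef and big, i.e.\ $Z_{w}$ is weak-Fano. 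This closes the cycle and gives the three equivalences.

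If one prefers to avoid re-deriving ``globally generated $\Rightarrow$ nef'', one can instead use \cref{thm4.16} in the other direction to pass directly between (ii) and (iii), and attach \cref{prop4.15} only at the step concluding weak-Fano; the logical skeleton is identical. Either way, the write-up should be just a few lines.
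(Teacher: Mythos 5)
Your argument is correct and uses exactly the same two ingredients as the paper (\cref{thm4.16} for the equivalence of nef and globally generated, and \cref{prop4.15} to supply bigness so that nef upgrades to weak-Fano); the paper merely states it as $(i)\Rightarrow(iii)\Leftrightarrow(ii)$ plus $(iii)\Rightarrow(i)$ rather than as a cycle. No substantive difference.
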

		\begin{proof}
			Clearly, by using \cref{thm4.16} it follows that $(i)\implies(iii)\iff (ii).$ On the other hand, by using \cref{prop4.15} and $(iii),$ $(i)$ follows. 
		\end{proof}

		Now we prepare in order to give necessary and sufficient conditions for $Z_{w}$ to be Fano (weak-Fano).

		\begin{lemma}
			
			Let $\lambda\in X(B).$ Let $(m_{r1},m_{r2},\ldots ,m_{rr})\in \mathbb{Z}^{r}$ be such that the line bundle
			$$\mathcal{O}_{Z_{w}}(\lambda)=\mathcal{O}_{Z_{w}}(m_{r1},m_{r2},\ldots, m_{rr})$$ (cf. \cite[Section 3.1, p.464]{LT}).
			Then we have 
			\begin{itemize}
				\item[(i)] $m_{rr}=\langle\lambda,\alpha_{i_{r}}^{\vee} \rangle.$
				
				\item[(ii)]  $m_{rk}=\langle \lambda-\sum_{l=k+1}^{r}m_{rl}\varpi_{i_{l}},\alpha_{i_{k}}^{\vee}\rangle$ for all $1\le k\le r-1.$
			\end{itemize}
		\end{lemma}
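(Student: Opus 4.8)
The plan is to prove (i) and (ii) simultaneously by induction on $r$, using the terminal $\mathbb{P}^1$-fibration $\pi_{w[1]}\colon Z_w\to Z_{w[1]}$, where $w[1]=(s_{i_1},\dots,s_{i_{r-1}})$. For $r=1$ one has $Z_w=P_{\alpha_{i_1}}/B\cong\mathbb{P}^1$, the generator $\mathcal{O}_{Z_w}(1)=\mathcal{O}_{Z_w}(\varpi_{i_1})$ has degree $1$, and $\mathcal{O}_{Z_w}(\lambda)=\mathcal{L}(\lambda)|_{P_{\alpha_{i_1}}/B}$ has degree $\langle\lambda,\alpha_{i_1}^\vee\rangle$; hence $m_{11}=\langle\lambda,\alpha_{i_1}^\vee\rangle$ and (ii) is vacuous. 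For general $r$, to obtain (i) I would restrict $\mathcal{O}_{Z_w}(\lambda)=\mathcal{O}_{Z_w}(m_{r1},\dots,m_{rr})$ to a fibre $F\cong\mathbb{P}^1$ of $\pi_{w[1]}$. Each $\mathcal{O}_{Z_{w[j]}}(1)$ with $1\le j\le r-1$ is a pull-back along $\pi_{w[j]}$, hence along $\pi_{w[1]}$, so it is trivial on $F$; therefore $\mathcal{O}_{Z_w}(\lambda)|_F=\mathcal{O}_{Z_w}(m_{rr})|_F$ has degree $m_{rr}$. On the other hand $\varphi_w$ maps $F$ isomorphically onto a $G$-translate of $P_{\alpha_{i_r}}B/B\subseteq G/B$, and since $\mathcal{L}(\lambda)$ is $G$-linearized its restriction to that translate has the same degree as $\mathcal{L}(\lambda)|_{P_{\alpha_{i_r}}/B}$, namely $\langle\lambda,\alpha_{i_r}^\vee\rangle$. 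Hence $m_{rr}=\langle\lambda,\alpha_{i_r}^\vee\rangle$, which is (i).

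For (ii) the key step is a reduction to $Z_{w[1]}$. Set $d=m_{rr}=\langle\lambda,\alpha_{i_r}^\vee\rangle$ and $\mu=\lambda-d\varpi_{i_r}$, which lies in $X(B)=X(T)$ since $G$ is simply connected. Then $\langle\mu,\alpha_{i_r}^\vee\rangle=0$, so $\mu$ extends to a character of the minimal parabolic $P_{\alpha_{i_r}}$ and $\mathcal{L}(\mu)=q^*\mathcal{N}$ for a line bundle $\mathcal{N}$ on $G/P_{\alpha_{i_r}}$, where $q\colon G/B\to G/P_{\alpha_{i_r}}$ is the projection. Since $q\circ\varphi_w$ sends $[p_1,\dots,p_r]$ to $p_1\cdots p_{r-1}p_rP_{\alpha_{i_r}}=p_1\cdots p_{r-1}P_{\alpha_{i_r}}$, the last factor being absorbed, we get $q\circ\varphi_w=q\circ\varphi_{w[1]}\circ\pi_{w[1]}$, and hence
\[
\mathcal{O}_{Z_w}(\mu)=\varphi_w^*q^*\mathcal{N}=\pi_{w[1]}^*\varphi_{w[1]}^*q^*\mathcal{N}=\pi_{w[1]}^*\mathcal{O}_{Z_{w[1]}}(\mu).
\]
Consequently $\mathcal{O}_{Z_w}(\lambda)=\mathcal{O}_{Z_w}(d)\otimes\pi_{w[1]}^*\mathcal{O}_{Z_{w[1]}}(\mu)$.

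Now apply the inductive hypothesis to the length-$(r-1)$ sequence $w[1]$ and the character $\mu$: write $\mathcal{O}_{Z_{w[1]}}(\mu)=\mathcal{O}_{Z_{w[1]}}(m'_1,\dots,m'_{r-1})$ with $m'_{r-1}=\langle\mu,\alpha_{i_{r-1}}^\vee\rangle$ and $m'_k=\langle\mu-\sum_{l=k+1}^{r-1}m'_l\varpi_{i_l},\alpha_{i_k}^\vee\rangle$ for $1\le k\le r-2$. Pulling this back along $\pi_{w[1]}$ and tensoring with $\mathcal{O}_{Z_w}(d)$ expresses $\mathcal{O}_{Z_w}(\lambda)$ in the $\mathcal{O}(1)$-basis of $Z_w$ as $\mathcal{O}_{Z_w}(m'_1,\dots,m'_{r-1},d)$; by uniqueness of the $\mathcal{O}(1)$-expansion (i.e., freeness of $\mathrm{Pic}(Z_w)$ on that basis) we conclude $m_{rr}=d$ and $m_{rk}=m'_k$ for $1\le k\le r-1$. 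Substituting $\mu=\lambda-d\varpi_{i_r}=\lambda-m_{rr}\varpi_{i_r}$ into the formulas for $m'_{r-1}$ and $m'_k$ and absorbing the term $-m_{rr}\varpi_{i_r}$ into the sum yields $m_{rk}=\langle\lambda-\sum_{l=k+1}^{r}m_{rl}\varpi_{i_l},\alpha_{i_k}^\vee\rangle$ for all $1\le k\le r-1$, i.e., (ii).

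The only non-formal point is the descent identity $\mathcal{O}_{Z_w}(\mu)=\pi_{w[1]}^*\mathcal{O}_{Z_{w[1]}}(\mu)$ when $\langle\mu,\alpha_{i_r}^\vee\rangle=0$; everything else is a degree count on $\mathbb{P}^1$ plus bookkeeping with the $\mathcal{O}(1)$-basis. Care is needed with the indexing convention: in $\mathcal{O}_{Z_w}(m_1,\dots,m_r)$ the entry $m_r$ sits on the top fibration $Z_w$ and $m_1$ on $Z_{w[r-1]}$, so the classes pulled back from $Z_{w[1]}$ occupy exactly the first $r-1$ slots; and one should confirm that the sign convention for $\mathcal{L}(\lambda)$ makes $\deg\bigl(\mathcal{L}(\lambda)|_{P_{\alpha_{i_r}}/B}\bigr)=\langle\lambda,\alpha_{i_r}^\vee\rangle$ rather than its negative, which is forced here by $\mathcal{O}_{Z_w}(1)=\mathcal{O}_{Z_w}(\varpi_{i_r})$ having degree $1$ on fibres together with $\langle\varpi_{i_r},\alpha_{i_r}^\vee\rangle=1$.
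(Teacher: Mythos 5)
Your proof is correct and follows essentially the same route as the paper: induction on $r$ via the terminal $\mathbb{P}^1$-fibration $\pi_{w[1]}$, the decomposition $\mathcal{O}_{Z_{w}}(\lambda)=\mathcal{O}_{Z_{w}}(\langle\lambda,\alpha_{i_{r}}^{\vee}\rangle)\otimes\pi_{w[1]}^{*}\mathcal{O}_{Z_{w[1]}}(\lambda-\langle\lambda,\alpha_{i_{r}}^{\vee}\rangle\varpi_{i_{r}})$, and identification of coefficients using the freeness of ${\rm Pic}(Z_{w})$ on the $\mathcal{O}(1)$-basis. The only difference is that you justify the decomposition step explicitly (descent of $\mathcal{L}(\mu)$ to $G/P_{\alpha_{i_{r}}}$ and a degree count on fibres), where the paper simply reads it off from the fibre product diagram.
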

		\begin{proof}
			Let $\pi_{r}: G/B\longrightarrow G/P_{\alpha_{i_{r}}}$ denote the natural projection map. Then consider the fibre product diagram
			
			$\centerline{\xymatrixcolsep{5pc}\xymatrix{ Z_{w}\ar[r]\ar[d]_{\pi_{w[1]}}&  G/B\ar[d]_{\pi_{r}}\\
					Z_{w[1]}	\ar[r]&  G/P_{\alpha_{i_{r}}}}}.$
			
			Since $\pi_{w[1]}$ is a $\mathbb{P}^{1}$-fibration by the above diagram, we have $$\mathcal{O}_{Z_{w}}(\lambda)=\mathcal{O}_{Z_{w}}(m'_{rr})\otimes \mathcal{O}_{Z_{w[1]}}(\lambda'),$$ where $m'_{rr}=\langle \lambda,\alpha_{i_{r}}^{\vee}\rangle$ and $\lambda'=\lambda-m'_{rr}\varpi_{i_{r}}.$ 
			
			By induction we have $$\mathcal{O}_{Z_{w[1]}}(\lambda')=\mathcal{O}_{Z_{w[1]}}(m'_{r-11},m'_{r-12},\ldots,m'_{r-1r-1}),$$ where $m_{r-1k}$'s  are given in the following:
			\begin{itemize}
				\item $m'_{r-1r-1}=\langle\lambda',\alpha_{i_{r-1}}^{\vee} \rangle$
				
				\item  $m'_{r-1k}=\langle \lambda'-\sum_{l=k+1}^{r-1}m'_{r-1l}\varpi_{i_{l}},\alpha_{i_{k}}^{\vee}\rangle$ for all $1\le k\le r-2.$
			\end{itemize}
			
			Hence, we have $\mathcal{O}_{Z_{w}}(\lambda)=\mathcal{O}_{Z_{w}}(m'_{r-11},m'_{r-12},\ldots,m'_{r-1r-1}, m'_{rr}).$ Since $\{\mathcal{O}_{Z_{w[j]}}(1): 1\le j\le r\}$ forms a basis of ${\rm Pic}(Z_{w}),$ we have
			\begin{itemize}
				\item $m_{rr}=m'_{rr}=\langle \lambda,\alpha_{i_{r}}^{\vee}\rangle$
				
				\item  $m_{rk}=m'_{r-1k}=\langle \lambda-\sum_{l=k+1}^{r}m_{rl}\varpi_{i_{l}},\alpha_{i_{k}}^{\vee}\rangle$ for all $1\le k\le r-1.$ 
			\end{itemize}  
		\end{proof}

		\begin{corollary}
			Fix an integer $1\le j\le r.$ Let  $(m_{j1},m_{j2},\ldots, m_{jj})\in \mathbb{Z}^{j}$ be such that $$\mathcal{O}_{Z_{w[r-j]}}(\lambda)=\mathcal{O}_{Z_{w[r-j]}}(m_{j1},m_{j2},\ldots,m_{jj}).$$ 
			Then we have 
			\begin{itemize}
				\item[(i)] $m_{jj}=\langle\lambda,\alpha_{i_{j}}^{\vee} \rangle.$
				
				\item[(ii)]  $m_{jk}=\langle \lambda-\sum_{l=k+1}^{j}m_{jl}\varpi_{i_{l}},\alpha_{i_{k}}^{\vee}\rangle$ for all $1\le k\le j-1.$
			\end{itemize}
		\end{corollary}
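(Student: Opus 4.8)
The plan is simply to recognize that this Corollary is the preceding lemma applied to a shorter sequence. By definition $Z_{w[r-j]}$ is the BSDH-variety attached to the truncated sequence $w[r-j]=(s_{i_{1}},\ldots,s_{i_{j}})$, which is itself an arbitrary (not necessarily reduced) sequence of simple reflections, now of length $j$. Its $\mathcal{O}(1)$-basis consists of the line bundles $\mathcal{O}_{Z_{w[r-k]}}(1)$ for $1\le k\le j$, and its tower of $\mathbb{P}^{1}$-fibrations is obtained by truncating the one for $Z_{w}$; likewise the pullback formula $\varphi_{w[r-j]}^{*}\mathcal{L}(\lambda)=\mathcal{O}_{Z_{w[r-j]}}(\lambda)$ holds verbatim, as recalled in \cref{sec2}.

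So first I would apply the preceding lemma with $w$ replaced by $w[r-j]$ and $r$ replaced by $j$. This directly produces the integers $(m_{j1},\ldots,m_{jj})\in\mathbb{Z}^{j}$ with $\mathcal{O}_{Z_{w[r-j]}}(\lambda)=\mathcal{O}_{Z_{w[r-j]}}(m_{j1},\ldots,m_{jj})$ together with the formulas $m_{jj}=\langle\lambda,\alpha_{i_{j}}^{\vee}\rangle$ and $m_{jk}=\langle\lambda-\sum_{l=k+1}^{j}m_{jl}\varpi_{i_{l}},\alpha_{i_{k}}^{\vee}\rangle$ for $1\le k\le j-1$, which is exactly the assertion.

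The only point that deserves a word is that the coefficients so obtained depend only on the tuple $(i_{1},\ldots,i_{j})$ and on $\lambda$, not on the ambient longer sequence $w$; but this is immediate from the shape of the recursion, and in fact the proof of the preceding lemma proceeds by induction on the length of the sequence, so the case of length $j$ is literally this Corollary. Hence there is no real obstacle here, and I would present the Corollary as a one-line specialization of the lemma.
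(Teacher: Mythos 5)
Your proposal is correct and matches the paper's treatment: the paper states this Corollary without a separate proof, precisely because it is the preceding lemma applied verbatim to the truncated sequence $w[r-j]=(s_{i_{1}},\ldots,s_{i_{j}})$, which is itself an arbitrary sequence of length $j$. Your added remark that the coefficients depend only on $(i_{1},\ldots,i_{j})$ and $\lambda$ is the right (and only) point needing attention, and it holds for exactly the reason you give.
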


		\begin{corollary}\label{cor4.12}
			Fix an integer $1\le j\le r.$ Let $(m_{j1},m_{j2},\ldots, m_{jj})\in \mathbb{Z}^{j}$ be such that $$\mathcal{O}_{Z_{w[r-j]}}(\alpha_{i_{j}})=\mathcal{O}_{Z_{w[r-j]}}(m_{j1},m_{j2},\ldots,m_{jj}).$$
			Then we have 
			\begin{itemize}
				\item[(i)] $m_{jj}=\langle \alpha_{i_{j}},\alpha_{i_{j}}^{\vee}\rangle =2$
				
				\item [(ii)] $m_{jk}=\langle \alpha_{i_{j}}-\sum_{l=k+1}^{j}m_{jl}\varpi_{i_{l}},\alpha_{i_{k}}^{\vee}\rangle$ for all $1\le k\le j-1.$
			\end{itemize}
		\end{corollary}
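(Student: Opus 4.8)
The plan is to obtain this as an immediate specialization of the preceding corollary, taking $\lambda=\alpha_{i_j}$. First I would note that $\alpha_{i_j}\in X(T)=X(B)$, so the hypothesis of the previous corollary is satisfied with this choice of $\lambda$; it yields at once that the $\mathcal{O}(1)$-expansion coefficients of $\mathcal{O}_{Z_{w[r-j]}}(\alpha_{i_j})$ are
$m_{jj}=\langle\alpha_{i_j},\alpha_{i_j}^{\vee}\rangle$ and $m_{jk}=\langle\alpha_{i_j}-\sum_{l=k+1}^{j}m_{jl}\varpi_{i_l},\alpha_{i_k}^{\vee}\rangle$ for $1\le k\le j-1$. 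This is precisely part (ii) together with the first equality of part (i).

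The only extra ingredient is the elementary identity $\langle\alpha,\alpha^{\vee}\rangle=2$, valid for every root $\alpha$. This is immediate from the definition of the coroot recalled in \cref{sec2}, namely $\langle\lambda,\alpha^{\vee}\rangle=\tfrac{2(\lambda,\alpha)}{(\alpha,\alpha)}$: setting $\lambda=\alpha$ gives $\langle\alpha,\alpha^{\vee}\rangle=\tfrac{2(\alpha,\alpha)}{(\alpha,\alpha)}=2$. Applying this with $\alpha=\alpha_{i_j}$ finishes part (i).

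I do not anticipate any genuine obstacle here; the statement is a direct consequence of the previous corollary combined with a standard root-system identity. The only point worth a brief remark is that the recursion determines $m_{jk}$ unambiguously: the right-hand side of the formula for $m_{jk}$ involves only $m_{jl}$ with $l>k$ (the coefficient of $\varpi_{i_k}$ does not appear), so the values $m_{jj},m_{j\,j-1},\ldots,m_{j1}$ are computed in that order, matching the well-definedness already noted in the \emph{Notation} preceding \cref{cor4.18}.
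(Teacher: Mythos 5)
Your proposal is correct and follows exactly the route the paper intends: the statement is the specialization $\lambda=\alpha_{i_j}$ of the preceding corollary (itself obtained from the lemma on the $\mathcal{O}(1)$-expansion of $\mathcal{O}_{Z_{w}}(\lambda)$ applied to the truncated variety $Z_{w[r-j]}$), combined with the identity $\langle\alpha_{i_j},\alpha_{i_j}^{\vee}\rangle=2$. Nothing is missing.
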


		\begin{lemma}\label{lem4.13}
			 $\omega_{Z_{w}}^{-1}$ is isomorphic to
			$\mathcal{O}_{Z_{w[r-1]}}(\alpha_{i_{1}})\otimes \cdots \otimes\mathcal{O}_{Z_{w[1]}}(\alpha_{i_{r-1}})\otimes \mathcal{O}_{Z_{w}}(\alpha_{i_{r}}).$
		\end{lemma}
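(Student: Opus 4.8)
The plan is to combine the classical description of the anti-canonical bundle of $Z_w$, namely
$$\omega_{Z_w}^{-1}\;\cong\;\mathcal{O}_{Z_w}\Bigl(\sum_{j=1}^{r}Z_{w(j)}\Bigr)\otimes\mathcal{O}_{Z_w}(\rho),$$
which was already recalled in the proof of \cref{prop4.15}, with the $\mathcal{O}(1)$-basis expansion of the two factors on the right. The target expression on the right-hand side of the lemma is, in the notation introduced after the statement of \cref{thm4.16}, the line bundle whose $\mathcal{O}(1)$-coordinates encode the numbers $m_{jk}$ of \cref{cor4.12}; so really the assertion is that
$$\mathcal{O}_{Z_w}\Bigl(\sum_{j=1}^{r}Z_{w(j)}\Bigr)\otimes\mathcal{O}_{Z_w}(\rho)\;\cong\;\bigotimes_{j=1}^{r}\pi_{w[r-j]}^{*}\mathcal{O}_{Z_{w[r-j]}}(\alpha_{i_j}).$$

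First I would reduce the divisor term. Each boundary divisor $Z_{w(j)}$ is a section of the $j$-th $\mathbb{P}^1$-fibration, so $\mathcal{O}_{Z_w}(Z_{w(j)})$ restricted to a fibre of $\pi_{w[r-j]}$ has degree $1$; together with \cite[Lemma 2.1, p.463]{LT} (the restriction formula for the $\mathcal{O}(1)$-basis along $\sigma_{w,j}$, already used in the proof of \cref{thm4.16}) this pins down $\mathcal{O}_{Z_w}(Z_{w(j)})$ in terms of $\mathcal{O}(1)$-coordinates. Summing over $j$ and adding the pullback $\mathcal{O}_{Z_w}(\rho)=\varphi_w^{*}\mathcal{L}(\rho)$, whose $\mathcal{O}(1)$-coordinates are computed by the preceding lemma applied to $\lambda=\rho$, I obtain an explicit $(n_1,\dots,n_r)\in\mathbb{Z}^r$ with $\omega_{Z_w}^{-1}\cong\mathcal{O}_{Z_w}(n_1,\dots,n_r)$. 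In parallel, the right-hand side $\bigotimes_{j=1}^r\pi_{w[r-j]}^{*}\mathcal{O}_{Z_{w[r-j]}}(\alpha_{i_j})$ has $\mathcal{O}(1)$-coordinates obtained by stacking, for each $j$, the vectors $(m_{j1},\dots,m_{jj})$ from \cref{cor4.12} (padded by zeros), and the matching of the two coordinate vectors is exactly the recursion defining the $m_{jk}$.

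A cleaner route, which I would actually prefer to write up, is an induction on $r$ using the $\mathbb{P}^1$-fibration structure $\pi_{w[1]}:Z_w\to Z_{w[1]}$ directly, so as to avoid recomputing the boundary divisors by hand. Since $Z_w=\mathbb{P}(\mathcal{O}_{Z_{w[1]}}(V_{\alpha_{i_r}}))$, the relative dualizing sheaf $\omega_{\pi_{w[1]}}$ is $\mathcal{O}_{Z_w}(2)\otimes\pi_{w[1]}^{*}\det\mathcal{O}_{Z_{w[1]}}(V_{\alpha_{i_r}})^{-1}$, and from the two-step filtration $0\to\mathcal{L}(s_{i_r}\varpi_{i_r})\to\mathcal{L}(V_{\alpha_{i_r}})\to\mathcal{L}(\varpi_{i_r})\to0$ one gets $\det\mathcal{O}_{Z_{w[1]}}(V_{\alpha_{i_r}})=\mathcal{O}_{Z_{w[1]}}(\varpi_{i_r}+s_{i_r}\varpi_{i_r})$, while $\varpi_{i_r}+s_{i_r}\varpi_{i_r}=2\varpi_{i_r}-\alpha_{i_r}$. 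Hence $\omega_{\pi_{w[1]}}^{-1}\cong\mathcal{O}_{Z_w}(\alpha_{i_r})$ after substituting $\mathcal{O}_{Z_w}(2)=\mathcal{O}_{Z_w}(2\varpi_{i_r})$. Combining with $\omega_{Z_w}^{-1}\cong\omega_{\pi_{w[1]}}^{-1}\otimes\pi_{w[1]}^{*}\omega_{Z_{w[1]}}^{-1}$ and the inductive hypothesis for $Z_{w[1]}$ (whose fibration tower is $w[1]=(s_{i_1},\dots,s_{i_{r-1}})$, of length $r-1$) gives precisely $\omega_{Z_w}^{-1}\cong\pi_{w[1]}^{*}\bigl(\mathcal{O}_{Z_{w[r-1]}}(\alpha_{i_1})\otimes\cdots\otimes\mathcal{O}_{Z_{w[1]}}(\alpha_{i_{r-1}})\bigr)\otimes\mathcal{O}_{Z_w}(\alpha_{i_r})$, which is the claim, with base case $r=1$ being $\omega_{\mathbb{P}^1}^{-1}=\mathcal{O}_{\mathbb{P}^1}(2)=\mathcal{O}_{\mathbb{P}^1}(\alpha_{i_1})$ on $Z_{(s_{i_1})}=P_{\alpha_{i_1}}/B$.

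The main obstacle is bookkeeping rather than anything conceptual: one must be careful that $\mathcal{O}_{Z_w}(1)=\mathcal{O}_{Z_w}(\varpi_{i_r})$ (not $\mathcal{L}(\varpi_{i_r})$ pulled all the way back, which would be $\mathcal{O}_{Z_w}(\varpi_{i_r})$ only after identifying it as the pullback $\varphi_{w[1]}^{*}$ of the quotient bundle) and that the indexing convention $\mathcal{O}_{Z_w}(m_1,\dots,m_r)$ reverses the order of the tower, so that the inductive step attaches the new factor $\mathcal{O}_{Z_w}(\alpha_{i_r})$ on the \emph{right}. I would also double-check the identity $\varpi_{i_r}+s_{i_r}(\varpi_{i_r})=2\varpi_{i_r}-\alpha_{i_r}$, which is immediate from $s_{i_r}(\varpi_{i_r})=\varpi_{i_r}-\langle\varpi_{i_r},\alpha_{i_r}^{\vee}\rangle\alpha_{i_r}=\varpi_{i_r}-\alpha_{i_r}$, and that $\mathcal{O}_{Z_w}(2)$ really equals $\mathcal{O}_{Z_w}(2\varpi_{i_r})$ by definition of the $\mathcal{O}(1)$-bundle. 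Once these conventions are fixed the induction closes with no genuine difficulty.
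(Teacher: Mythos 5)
Your second (preferred) argument is correct and follows essentially the same route as the paper: induct on the length of the $\mathbb{P}^1$-fibration tower, write $\omega_{Z_w}^{-1}\cong \mathcal{R}_{\pi_{w[1]}}\otimes \pi_{w[1]}^{*}\omega_{Z_{w[1]}}^{-1}$, identify the relative anticanonical bundle of $\pi_{w[1]}$ with $\mathcal{O}_{Z_w}(\alpha_{i_r})$, and apply the inductive hypothesis to $Z_{w[1]}$. The only real difference is how that identification is made: the paper uses the cartesian square over $G/B\to G/P_{\alpha_{i_r}}$ and the fact that $\mathcal{L}(\alpha_{i_r})$ is the relative tangent bundle of $\pi_r$, so that $\mathcal{R}_{\pi_{w[1]}}=\varphi_w^{*}\mathcal{L}(\alpha_{i_r})=\mathcal{O}_{Z_w}(\alpha_{i_r})$ in one stroke, whereas you compute $T_{\pi_{w[1]}}=\mathcal{O}_{Z_w}(2)\otimes\pi_{w[1]}^{*}\det\mathcal{O}_{Z_{w[1]}}(V_{\alpha_{i_r}})^{-1}$ from the projectivized rank-two bundle and the weight identity $\varpi_{i_r}+s_{i_r}(\varpi_{i_r})=2\varpi_{i_r}-\alpha_{i_r}$. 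Two small points in your version: (i) the sheaf you call the relative dualizing sheaf is in fact its inverse (the relative tangent bundle); your subsequent conclusion $\omega_{\pi_{w[1]}}^{-1}\cong\mathcal{O}_{Z_w}(\alpha_{i_r})$ is the correct one, so this is only a labelling slip; (ii) your final substitution silently uses $\pi_{w[1]}^{*}\mathcal{O}_{Z_{w[1]}}(\alpha_{i_r}-2\varpi_{i_r})=\mathcal{O}_{Z_w}(\alpha_{i_r}-2\varpi_{i_r})$, which is not automatic for arbitrary characters (since $\varphi_{w[1]}\circ\pi_{w[1]}\neq\varphi_w$) but holds here because $\langle\alpha_{i_r}-2\varpi_{i_r},\alpha_{i_r}^{\vee}\rangle=0$, so $\mathcal{L}(\alpha_{i_r}-2\varpi_{i_r})$ descends to $G/P_{\alpha_{i_r}}$ and one can pass through the same cartesian square the paper uses. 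With that remark added, your induction closes exactly as in the paper; your first sketch via the Mehta--Ramanathan formula and $\mathcal{O}(1)$-coordinates would also work but, as you note, only reproves \cref{prop4.14} by bookkeeping.
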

		\begin{proof}
			Consider the fibre product diagram
			
			$\centerline{\xymatrixcolsep{5pc}\xymatrix{ Z_{w}\ar[r]\ar[d]_{\pi_{w[1]}}& G/B\ar[d]_{\pi_{r}}\\
					Z_{w[r-1]}	\ar[r]&  G/P_{\alpha_{i_{r}}}}}.$
			
			Then we have the following short exact sequence 
			\begin{equation}\label{eq4.3}
				0\longrightarrow \mathcal{R}_{\pi_{w[1]}}\longrightarrow T_{Z_{w}}\longrightarrow \pi_{w[1]}^{*} T_{Z_{w[1]}}\longrightarrow 0,
			\end{equation}
		of tangent bundles on $Z_{w},$ where $T_{Z_{w}}$ (respectively, $T_{Z_{w[1]}}$) denotes the tangent bundle of $Z_{w}$ (respectively, of $Z_{w[1]}$), and $\mathcal{R}_{\pi_{w[1]}}$ denotes the relative tangent bundle with respect to the map $\pi_{w[1]}.$ Hence, by \eqref{eq4.3} we have $\omega_{Z_{w}}^{-1}=\mathcal{R}_{\pi_{w[1]}}\otimes \pi_{w[1]}^{*}\omega_{Z_{w[1]}}^{-1}.$
			
			Since $\mathcal{L}(\alpha_{i_{r}})$ is the relative tangent bundle on $G/B$  with respect to $\pi_{r},$ the pull back line bundle $\mathcal{O}_{Z_{w}}(\alpha_{i_{r}})$ is equal to  $\mathcal{R}_{\pi_{w[1]}}$ on $Z_{w}.$
			
			By induction we have $\omega_{Z_{w[1]}}^{-1}=\mathcal{O}_{Z_{w[1]}}(\alpha_{i_{r-1}})\otimes \mathcal{O}_{Z_{w[1][1]}}(\alpha_{i_{r-2}})\otimes\cdots\otimes \mathcal{O}_{Z_{w[1][r-2]}}(\alpha_{i_{1}}).$ Therefore, we have $\omega_{Z_{w}}^{-1}=\mathcal{O}_{Z_{w}}(\alpha_{i_{r}})\otimes \mathcal{O}_{Z_{w[1]}}(\alpha_{i_{r-1}})\otimes \mathcal{O}_{Z_{w[2]}}(\alpha_{i_{r-2}})\otimes\cdots\otimes \mathcal{O}_{Z_{w[r-1]}}(\alpha_{i_{1}}).$
		\end{proof}
		Recall that by \cite[Section 3.1, p.464]{LT} we have $$\omega_{Z_{w}}^{-1}=\mathcal{O}_{Z_{w}}(n_{1},n_{2},\ldots,n_{r})$$ for a unique $(n_{1},\ldots, n_{r})\in \mathbb{Z}^{r}.$ 
		
		\begin{proposition}\label{prop4.14}
			
			Then we have 
			\begin{itemize}
				\item[(i)] $n_{r}=2.$
				
				\item [(ii)] $n_{j}=\sum_{l=j}^{r}m_{lj}$ for all $1\le j\le r-1.$
			\end{itemize} 
		where $m_{lj}$ are as in the statement of $\cref{cor4.12}.$
		\end{proposition}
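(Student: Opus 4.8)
The plan is to compute $\omega_{Z_w}^{-1}$ directly in the $\mathcal{O}(1)$-basis of $Z_w$ by combining the two immediately preceding results, \cref{lem4.13} and \cref{cor4.12}. By \cref{lem4.13} we have
\[
\omega_{Z_w}^{-1} \;\cong\; \bigotimes_{j=1}^{r}\pi_{w[r-j]}^{*}\,\mathcal{O}_{Z_{w[r-j]}}(\alpha_{i_j}),
\]
where the $j=r$ term is just $\mathcal{O}_{Z_w}(\alpha_{i_r})$ since $w[0]=w$ and $\pi_{w[0]}=\mathrm{id}$. So it suffices to express each factor $\mathcal{O}_{Z_{w[r-j]}}(\alpha_{i_j})$ in the $\mathcal{O}(1)$-basis of $Z_{w[r-j]}$ and then pull back to $Z_w$. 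But \cref{cor4.12} says exactly that $\mathcal{O}_{Z_{w[r-j]}}(\alpha_{i_j})=\mathcal{O}_{Z_{w[r-j]}}(m_{j1},\ldots,m_{jj})$ with $m_{jj}=2$.

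The next step is to unwind the bracket notation. The sequence $w[r-j]$ has length $j$, so by definition of the $\mathcal{O}(m_1,\dots)$ notation, $\mathcal{O}_{Z_{w[r-j]}}(m_{j1},\ldots,m_{jj})=\bigotimes_{k=1}^{j}\mathcal{O}_{Z_{(w[r-j])[j-k]}}(m_{jk})$, each factor pulled back to $Z_{w[r-j]}$ along the appropriate fibration. The bookkeeping identity one needs is $(w[r-j])[j-k]=w[r-k]$ for $1\le k\le j$, together with the fact that the composite of the successive $\mathbb{P}^1$-fibrations $Z_w\to Z_{w[r-j]}\to Z_{w[r-k]}$ is $\pi_{w[r-k]}$. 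Pulling the identity of \cref{cor4.12} back from $Z_{w[r-j]}$ to $Z_w$ therefore gives
\[
\pi_{w[r-j]}^{*}\,\mathcal{O}_{Z_{w[r-j]}}(\alpha_{i_j}) \;\cong\; \bigotimes_{k=1}^{j}\pi_{w[r-k]}^{*}\,\mathcal{O}_{Z_{w[r-k]}}(m_{jk}).
\]

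Substituting this into the expression for $\omega_{Z_w}^{-1}$ and regrouping the (commuting) tensor factors according to the truncation index $k$ — for a fixed $k$ the factor $\pi_{w[r-k]}^{*}\mathcal{O}_{Z_{w[r-k]}}(m_{lk})$ occurs precisely for $k\le l\le r$ — yields
\[
\omega_{Z_w}^{-1} \;\cong\; \bigotimes_{k=1}^{r}\pi_{w[r-k]}^{*}\,\mathcal{O}_{Z_{w[r-k]}}\!\Big(\textstyle\sum_{l=k}^{r}m_{lk}\Big).
\]
On the other hand, by definition of the $\mathcal{O}(1)$-basis, $\omega_{Z_w}^{-1}=\mathcal{O}_{Z_w}(n_1,\ldots,n_r)=\bigotimes_{k=1}^{r}\pi_{w[r-k]}^{*}\mathcal{O}_{Z_{w[r-k]}}(n_k)$. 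Since the classes $\pi_{w[r-k]}^{*}\mathcal{O}_{Z_{w[r-k]}}(1)$, $1\le k\le r$, form a $\mathbb{Z}$-basis of the free abelian group ${\rm Pic}(Z_w)$, comparing coefficients gives $n_k=\sum_{l=k}^{r}m_{lk}$ for every $1\le k\le r$. Taking $k=r$ and using $m_{rr}=2$ from \cref{cor4.12}(i) gives (i); taking $1\le j\le r-1$ (and renaming $k$ to $j$) gives (ii).

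The only point that needs genuine care is the index and pullback bookkeeping in the middle steps: identifying which truncated BSDH-variety each $\mathcal{O}(1)$-factor descends to, and checking that the successive $\mathbb{P}^1$-fibrations compose to the expected projections. Once this is set up, the conclusion is a routine comparison of coordinates against the $\mathcal{O}(1)$-basis, so I do not expect any substantive obstacle beyond that bookkeeping.
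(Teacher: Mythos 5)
Your proposal is correct and follows essentially the same route as the paper: combine \cref{lem4.13} with \cref{cor4.12}, collect the coefficients of each $\pi_{w[r-k]}^{*}\mathcal{O}_{Z_{w[r-k]}}(1)$, and compare against $\mathcal{O}_{Z_w}(n_1,\ldots,n_r)$ using that these classes form a basis of ${\rm Pic}(Z_w)$. The only difference is that you spell out the truncation/pullback bookkeeping ($(w[r-j])[j-k]=w[r-k]$, compatibility of the fibrations) that the paper leaves implicit.
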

		\begin{proof}
			By \cref{lem4.13} we have  $$\omega_{Z_{w}}^{-1}=\mathcal{O}_{Z_{w[r-1]}}(\alpha_{i_{1}})\otimes \cdots \otimes\mathcal{O}_{Z_{w[1]}}(\alpha_{i_{r-1}})\otimes \mathcal{O}_{Z_{w}}(\alpha_{i_{r}}).$$ 
			
			For $1\le j\le r,$ by using \cref{cor4.12} we have $$\mathcal{O}_{Z_{w[r-j]}}(\alpha_{i_{j}})=\mathcal{O}_{Z_{w[r-j]}}(m_{j1},m_{j2},\ldots,m_{jj}),$$ where $m_{jj}=2,$ $m_{jk}=\langle \alpha_{i_{j}}-\sum_{l=k+1}^{j}m_{jl}\varpi_{i_{l}},\alpha_{i_{k}}^{\vee}\rangle$ for all $1\le k\le j-1.$ Therefore, we have $$\omega_{Z_{w}}^{-1}=\mathcal{O}_{Z_{w}}(\sum_{l=1}^{r}m_{l1},\ldots,\sum_{l=j}^{r}m_{lj},\ldots,m_{rr}).$$ Since $\{\mathcal{O}_{Z_{w[j]}}(1): 1\le j\le r\}$ forms a basis of \text{Pic}($Z_{w}$), we have  $n_{r}=m_{rr}=2$ and $n_{j}=\sum_{l=j}^{r}m_{lj}$ for all $1\le j\le r-1$.
		\end{proof}

		\begin{corollary}\label{cor4.17}
			 $\omega_{Z_{w}}^{-1}$ is globally generated $($respectively, ample$)$ if and only if  $\sum_{l=j}^{r}m_{lj}\ge 0$ $($\text{respectively,} $\sum_{l=j}^{r}m_{lj} >0$$)$ for all $1\le j\le r-1.$
		\end{corollary}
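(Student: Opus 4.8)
The plan is to combine the explicit description of $\omega_{Z_w}^{-1}$ in the $\mathcal{O}(1)$-basis from Proposition \ref{prop4.14} with the already-established equivalence (Theorem \ref{thm4.16}) between nefness and global generation on $Z_w$, together with the classical characterization of nef/ample line bundles on $Z_w$ in terms of the $\mathcal{O}(1)$-basis due to Lauritzen--Thomsen. First I would recall from Proposition \ref{prop4.14} that $\omega_{Z_w}^{-1}=\mathcal{O}_{Z_w}(n_1,\ldots,n_r)$ with $n_r=2$ and $n_j=\sum_{l=j}^r m_{lj}$ for $1\le j\le r-1$. So the problem reduces to deciding when the line bundle $\mathcal{O}_{Z_w}(n_1,\ldots,n_r)$ with $n_r=2$ is globally generated (respectively ample).

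Next I would invoke the Lauritzen--Thomsen criterion: a line bundle $\mathcal{O}_{Z_w}(a_1,\ldots,a_r)$ is globally generated if and only if $a_1,\ldots,a_r\ge 0$ (cf. \cite[Corollary 3.3, p.465]{LT}), and ample if and only if $a_1,\ldots,a_r>0$. Applying this with $(a_1,\ldots,a_r)=(n_1,\ldots,n_{r-1},2)$, global generation is equivalent to $n_j\ge 0$ for $1\le j\le r-1$ (the condition $n_r=2\ge 0$ being automatic), i.e. $\sum_{l=j}^r m_{lj}\ge 0$ for all $1\le j\le r-1$; similarly ampleness is equivalent to $n_j>0$ for $1\le j\le r-1$ (again $n_r=2>0$ automatically), i.e. $\sum_{l=j}^r m_{lj}>0$ for all $1\le j\le r-1$.

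Alternatively, one can phrase the global-generation direction through Theorem \ref{thm4.16}: $\omega_{Z_w}^{-1}$ is globally generated iff it is nef, and nefness of $\mathcal{O}_{Z_w}(n_1,\ldots,n_r)$ is tested on the $T$-invariant (fiber) curves of the successive $\mathbb{P}^1$-fibrations, which again yields $n_j\ge 0$ for all $j$. Since $n_r=2>0$ identically, the genuine content of all three conditions (nef, globally generated, ample) is entirely carried by the indices $1\le j\le r-1$, which is exactly the asymmetry in the statement. There is no real obstacle here; the only point requiring minor care is checking that the Lauritzen--Thomsen criterion, stated in \cite{LT} for a possibly different indexing convention of the $\mathcal{O}(1)$-basis, matches the ordering $\mathcal{O}_{Z_w}(m_1,\ldots,m_r)$ fixed earlier in Section \ref{sec2} — this is just bookkeeping already implicit in the proof of Theorem \ref{thm4.16}.
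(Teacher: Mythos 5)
Your proposal is correct and takes essentially the same route as the paper: the paper's proof of \cref{cor4.17} likewise combines the expression of $\omega_{Z_{w}}^{-1}$ in the $\mathcal{O}(1)$-basis from \cref{prop4.14} with the Lauritzen--Thomsen criteria \cite[Theorem 3.1, Corollary 3.3]{LT} for ampleness and global generation, the condition $n_{r}=2$ being automatic. Your alternative phrasing via \cref{thm4.16} is a harmless addition but not needed.
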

		\begin{proof}
			By using \cref{prop4.14}, and \cite[Theorem 3.1, Corollary 3.3, p.464-465]{LT} proof of the corollary follows.
		\end{proof}

		\begin{proof}[Proof of \cref{cor4.18}]
			By \cref{cor4.17} it follows that $Z_{w}$ is Fano if and only if  $\sum_{l=j}^{r}m_{lj} >0$ for all $1\le j\le r-1.$ On the other hand, by \cref{cor4.17} and \cref{cor3.6} it follows that $Z_{w}$ is weak-Fano if and only if  $\sum_{l=j}^{r}m_{lj}\ge 0$ for all $1\le j\le r-1.$
		\end{proof}

		\section{Line bundles on $G$-BSDH-varieties}\label{sec4}
		In this section we first define $\mathcal{O}(1)$-basis of the Picard group ${\rm Pic}(\widetilde{Z_{w}})$ of $\widetilde{Z_{w}}.$ Using $\mathcal{O}(1)$-basis of ${\rm Pic}(\widetilde{Z_{w}})$ we give a characterization of ample, very ample and globally generated line bundles on $\widetilde{Z_{w}}.$ We prove that every nef line bundle on $\widetilde{Z_{w}}$ is globally generated. 
		\subsection{Picard group of $G$-BSDH-variety}
		Let $\widetilde{\pi_{r}}: G/B\times G/B \longrightarrow G/B\times G/P_{\alpha_{i_{r}}}$ denote the natural projection map. Note that we have the following commutative fibre product diagram:
		
		$\centerline{\xymatrixcolsep{5pc}\xymatrix{\widetilde{Z_{w}}\ar[r]^{\widetilde{\varphi_{w}}}\ar[d]_{\widetilde{\pi_{w[1]}}}& G/B\times G/B\ar[d]^{\widetilde{\pi_{r}}}\\
				\widetilde{Z_{w[1]}}\ar[r]_{\widetilde{\varphi_{w[1]}}}& G/B\times G/P_{\alpha_{i_{r}}}}}$

		For $B$-modules $M$ and $N,$ we denote the vector bundle $p_{1}^{*}\mathcal{L}(M)\otimes p_{2}^{*}\mathcal{L}(N)$ on $G/B\times G/B$ by $\mathcal{L}(M\boxtimes N),$ where $p_{i}$ denotes the projection from $G/B\times G/B$ onto the $i^{\rm th}$ factor. We denote the pull back vector bundle $\widetilde{\varphi_{w}}^{*}\mathcal{L}(M\boxtimes N)$ on $\widetilde{Z_{w}}$ by simply $\mathcal{O}_{\widetilde{Z_{w}}}(M\boxtimes N).$
		In particular, for $\lambda, \mu\in X(B),$ $\mathcal{L}(\lambda\boxtimes \mu)$ denotes the line bundle on $G/B\times G/B$ associated to $\lambda, \mu,$ and its pull back on $\widetilde{Z_{w}}$ is denoted by $\mathcal{O}_{\widetilde{Z_{w}}}(\lambda\boxtimes\mu).$   
		
		Let ${\varepsilon}_{0}$ denote the trivial character of $B.$ Then for any $B$-module $V,$ we define $\mathcal{O}_{\widetilde{Z_{w}}}(V):= \widetilde{\varphi_{w}}^{*}\mathcal{L}(\varepsilon_{0}\boxtimes V).$ In particular, for $\lambda \in X(B),$ we have  $\mathcal{O}_{\widetilde{Z_{w}}}(\lambda):= \widetilde{\varphi_{w}}^{*}\mathcal{L}(\varepsilon_{0}\boxtimes \lambda).$ 
		
		Recall that $V_{\alpha_{i_{r}}}=H^{0}(P_{\alpha_{i_{r}}} /B, \mathcal{L}(\varpi_{i_{r}}))$ denotes the natural two dimensional $B$-module.  Then we have the following exact sequences
		\begin{equation*}
			0\to L_{\varepsilon_{0}}\boxtimes L_{s_{i_{r}}(\varpi_{i_{r}})}\to L_{\varepsilon_{0}} \boxtimes V_{\alpha_{i_{r}}}\to L_{\varepsilon_{0}}\boxtimes L_{\varpi_{i_{r}}}\to 0
		\end{equation*}	
	 of $B\times B$-modules, where $L_{\lambda}\boxtimes L_{\mu}$ denotes the one-dimensional representation of $B\times B$ with character $(\lambda,\mu).$
		This gives an exact sequence 
		\begin{equation*}
			0\to  \mathcal{L}({\varepsilon_{0}}\boxtimes s_{i_{r}}(\varpi_{i_{r}}))\to \mathcal{L}(\varepsilon_{0}\boxtimes V_{\alpha_{i_{r}}})\to  \mathcal{L}(\varepsilon_{0}\boxtimes \varpi_{i_{r}}) \to 0
		\end{equation*}
	of vector bundles on $G/B\times G/B.$ Pulling back by $\widetilde{\varphi_{w[1]}}$ induces an exact sequence 
		\begin{equation*}
			0\to \mathcal{O}_{\widetilde{Z_{w[1]}}}(s_{i_{r}(\varpi_{i_{r}})})\to \mathcal{O}_{\widetilde{Z_{w[1]}}}(V_{\alpha_{i_{r}}})\to  \mathcal{O}_{\widetilde{Z_{w[1]}}}(\varpi_{i_{r}}) \to 0 
		\end{equation*}
	of vector bundles on $\widetilde{Z_{w[1]}}.$ Then $\widetilde{Z_{w}}$ is identified with the projective bundle $\mathbb{P}(\mathcal{O}_{\widetilde{Z_{w[1]}}}(V_{\alpha_{i_{r}}})).$ Thus the corresponding universal quotient bundle of $\mathcal{O}_{\widetilde{Z_{w[1]}}}(V_{\alpha_{i_{r}}})$ is $\mathcal{O}_{\widetilde{Z_{w[1]}}}(\varpi_{i_{r}}).$  So, we define $\mathcal{O}_{\widetilde{Z_{w}}}(1):=\mathcal{O}_{\widetilde{Z_{w}}}(\varpi_{i_{r}})$ and write $\mathcal{O}_{\widetilde{Z_{w}}}(m):= \mathcal{O}_{\widetilde{Z_{w}}}(1)^{\otimes m}.$ Note that $\mathcal{O}_{\widetilde{Z_{w}}}(1)$ is a line bundle on $\widetilde{Z_{w}}$ with degree one along the fibers of $\widetilde{\pi_{w[1]}}.$

		Since the map $\widetilde{\pi_{w[1]}}:\widetilde{Z_{w}}\to \widetilde{Z_{w[1]}}$ is a $\mathbb{P}^{1}$-fibration, we have Pic$(\widetilde{Z_{w}})=$ Pic($\widetilde{Z_{w[1]}}$) $\oplus \mathbb{Z}\mathcal{O}_{\widetilde{Z_{w}}}(1).$ Now by induction on the number of $\mathbb{P}^{1}$-fibrations, we have $${\rm Pic}(\widetilde{Z_{w}})=\mathbb{Z}\mathcal{O}_{\widetilde{Z_{w}}}(1)\oplus \mathbb{Z}\mathcal{O}_{\widetilde{Z_{w[1]}}}(1)\oplus \cdots \oplus \mathbb{Z}\mathcal{O}_{\widetilde{Z_{w[r-1]}}}(1)\oplus \text{Pic}(G/B),$$ where we write $\mathcal{O}_{\widetilde{Z_{w[j]}}}(1)$ instead of the pull back $\widetilde{\pi_{w[j]}}^{*}\mathcal{O}_{\widetilde{Z_{w[j]}}}(1).$ Thus,  ${\rm Pic}(\widetilde{Z_{w}})$ is a free abelian group of rank $n+r.$ This gives inductively a basis for ${\rm Pic}(\widetilde{Z_{w}})$ which we call the $\mathcal{O}(1)$-basis. For $(m_{1},m_{2},\ldots,m_{r})\in \mathbb{Z}^{r},$ we denote the line bundle $\mathcal{O}_{\widetilde{Z_{w}}}(m_{r})\otimes \mathcal{O}_{\widetilde{Z_{w[1]}}}(m_{r-1})\otimes \cdots \otimes \mathcal{O}_{\widetilde{Z_{w[r-1]}}}(m_{1})$ by $\mathcal{O}_{\widetilde{Z_{w}}}(m_{1},\ldots, m_{r}).$

		\cref{lem4.1} follows immediately from the above discussion.
		 
		\begin{proof}[Proof of \cref{thm3.1}]
			We prove by induction on $r=\dim(Z_{w}).$
			If $r=0,$ then $Z_{w}$ is a point and hence $\widetilde{Z_{w}}\longrightarrow G/B$ is an isomorphism. So, we are done. 
			
			Now, assume that $r\ge 1.$
			Then the $\mathbb{P}^1$-bundle $\widetilde{\pi_{w[1]}}: \widetilde{Z_{w}}\longrightarrow \widetilde{Z_{w[1]}}$ may by identified with the projective bundle $\mathbb{P}(\mathcal{V})\to \widetilde{Z_{w[1]}},$ where $\mathcal{V}$ is the rank two vector bundle $\mathcal{O}_{\widetilde{Z_{w[1]}}}(\varepsilon_{0}\boxtimes V_{\alpha_{i_{r}}})$ on $\widetilde{Z_{w[1]}}$ and $\mathcal{O}_{\mathbb{P}(\mathcal{V})}=\mathcal{O}_{\widetilde{Z_{w}}}(1).$ Note that $\mathcal{L}(\varepsilon_{0}\boxtimes V_{\alpha_{i_{r}}} )$ is a globally generated vector bundle on $G/B\times G/B.$ Since $\mathcal{V}$ is the pull back of a globally
			generated vector bundle on $G/B\times G/B,$ it is also globally generated. Thus, we have the following commutative diagram:
			
			$\centerline{\xymatrixcolsep{5pc}\xymatrix{\widetilde{Z_{w}}\ar@{^{(}->}[r]^{\varphi}\ar[d]& \mathbb{P}^{N}\times \widetilde{Z_{w[1]}}\ar[ld]\\
					\widetilde{Z_{w[1]}}}}$
			for some $N\in \mathbb{N}$ such that $\varphi$ is closed embedding and $\mathcal{O}_{\widetilde{Z_{w}}}(1)=\varphi^{*}(\mathcal{O}_{\mathbb{P}^{N}}(1)\boxtimes \mathcal{O}_{\widetilde{Z_{w[1]}}}).$ Therefore, it follows that $\varphi^{*}(\mathcal{O}_{P^{N}}(n)\boxtimes\widetilde{\pi_{w[1]}}^{*}\mathcal{L}')$ is very ample whenever $n>0$ and $\mathcal{L}'$ is very ample on $\widetilde{Z_{w[1]}}.$ Hence, by induction $\mathcal{L}$ is very ample line bundle on $\widetilde{Z_{w}}$ if $m_{1},\ldots, m_{r}>0$ and $\lambda$ is regular dominant.
			
			Conversely, assume that $\mathcal{L}$ is a very ample line bundle on $\widetilde{Z_{w}}.$ 	Fix an $\alpha\in S.$ Consider the following commutative diagram:
			
			$$\centerline{\xymatrixcolsep{5pc}\xymatrix{P_{\alpha}\times^{B}Z_{w}\ar@{^{(}->}[r]^{\iota}\ar[d]_{\phi}& \widetilde{Z_{w}}\ar[d]^{\widetilde{ \pi_{w[r]}}}\\
					P_{\alpha}/B	\ar@^{{(}->}[r]_{\iota}& G/B}}$$
			Then the restriction of $\mathcal{L}$ to $P_{\alpha}\times^{B}Z_{w}$ is very ample. Note that $\mathcal{L}|_{P_{\alpha}\times^{B}Z_{w}}=\mathcal{O}_{Z_{w}}(m_{1},\ldots,m_{r})\otimes\iota^{*}\widetilde{\pi_{w[r]}}^{*}\mathcal{L}(\lambda)$ and  $\iota^{*} \widetilde{\pi_{w[r]}}^{*}\mathcal{L}(\lambda)=\,\phi^{*}\mathcal{O}_{P_{\alpha}/B}(\langle \lambda,\alpha^{\vee} \rangle).$ Observe that $P_{\alpha}\times^{B} Z_{w}= Z_{w'},$ where $w'$ is the sequence $(s_{\alpha}, w).$ Hence, by \cite[Theorem 3.1, p.464]{LT} it follows that $m_{1},\ldots, m_{r}> 0$ and $\langle \lambda,\alpha^{\vee}\rangle>0.$ Therefore, $m_{1},\ldots, m_{r}> 0$ and $\lambda$ is regular dominant.
		\end{proof}
		
		\begin{corollary}\label{cor4.3}
			Any ample line bundle on $\widetilde{Z_{w}}$ is very ample.
		\end{corollary}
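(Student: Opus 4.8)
The plan is to reduce everything to \cref{thm3.1}. That theorem already gives the hard implication: a line bundle of the form $\mathcal{O}_{\widetilde{Z_{w}}}(m_{1},\ldots,m_{r})\otimes\widetilde{\pi_{w[r]}}^{*}\mathcal{L}(\lambda)$ with $m_{1},\ldots,m_{r}>0$ and $\lambda$ regular dominant is very ample. So it suffices to show that an \emph{ample} line bundle on $\widetilde{Z_{w}}$ is necessarily of this shape, i.e.\ that ampleness forces $m_{1},\ldots,m_{r}>0$ and $\lambda$ regular dominant.

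First I would write, using \cref{lem4.1}, $\mathcal{L}=\mathcal{O}_{\widetilde{Z_{w}}}(m_{1},\ldots,m_{r})\otimes\widetilde{\pi_{w[r]}}^{*}\mathcal{L}(\lambda)$ for a unique $(m_{1},\ldots,m_{r})\in\mathbb{Z}^{r}$ and a unique $\lambda\in X(B)$. Then I would fix a simple root $\alpha\in S$ and consider the closed embedding $\iota\colon P_{\alpha}\times^{B}Z_{w}\hookrightarrow\widetilde{Z_{w}}$ induced by $P_{\alpha}/B\hookrightarrow G/B$, exactly as in the proof of \cref{thm3.1}. Since the restriction of an ample line bundle to a closed subvariety is ample, $\mathcal{L}|_{P_{\alpha}\times^{B}Z_{w}}$ is ample. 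As computed in the proof of \cref{thm3.1}, $P_{\alpha}\times^{B}Z_{w}=Z_{w'}$ for the sequence $w'=(s_{\alpha},w)$, and using $\iota^{*}\widetilde{\pi_{w[r]}}^{*}\mathcal{L}(\lambda)=\phi^{*}\mathcal{O}_{P_{\alpha}/B}(\langle\lambda,\alpha^{\vee}\rangle)$ one gets, in the $\mathcal{O}(1)$-basis of $Z_{w'}$, that $\mathcal{L}|_{Z_{w'}}=\mathcal{O}_{Z_{w'}}(\langle\lambda,\alpha^{\vee}\rangle,m_{1},\ldots,m_{r})$.

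Next I would invoke \cite[Theorem 3.1, p.464]{LT}: a line bundle on a BSDH-variety, expressed in its $\mathcal{O}(1)$-basis, is ample precisely when all of its coefficients are strictly positive (and in that case it is already very ample). Applying this to $\mathcal{L}|_{Z_{w'}}$ yields $m_{1},\ldots,m_{r}>0$ and $\langle\lambda,\alpha^{\vee}\rangle>0$. Since $\alpha\in S$ was arbitrary, $\lambda$ is regular dominant. Hence $\mathcal{L}$ satisfies the hypotheses of \cref{thm3.1} with $m_{1},\ldots,m_{r}>0$ and $\lambda$ regular dominant, so $\mathcal{L}$ is very ample.

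I do not expect a serious obstacle: the argument is essentially the converse half of the proof of \cref{thm3.1} rerun with ``ample'' in place of ``very ample''. The only point that needs care is that on a BSDH-variety ampleness of a line bundle (and not merely very ampleness) already forces every $\mathcal{O}(1)$-coefficient to be positive; but this is exactly the content of \cite[Theorem 3.1, p.464]{LT}, so the proof is immediate.
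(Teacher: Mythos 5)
Your proposal is correct and is essentially the paper's intended argument: the paper derives \cref{cor4.3} from \cref{thm3.1}, whose converse direction is proved by exactly the restriction to $P_{\alpha}\times^{B}Z_{w}=Z_{w'}$ and the appeal to \cite[Theorem 3.1]{LT}, which characterizes ample (equivalently very ample) line bundles on BSDH-varieties by strict positivity of the $\mathcal{O}(1)$-coefficients. Rerunning that restriction step with ``ample'' in place of ``very ample'' and then applying the forward direction of \cref{thm3.1} is precisely what the paper has in mind.
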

		
		\begin{corollary}\label{cor3.3}
		Let $(m_{1},m_{2},\ldots,m_{r})\in \mathbb{Z}^{r}$ and $\lambda\in X(B).$ Then	a line bundle $\mathcal{L}= \mathcal{O}_{\widetilde{Z_{w}}}(m_{1},\ldots, m_{r})\otimes\widetilde{\pi_{w[r]}}^{*}\mathcal{L}(\lambda)$ on $\widetilde{Z_{w}}$ is globally generated if and only if $m_{1},\dots,m_{r}\ge 0$ and $\lambda$ is dominant. 
		\end{corollary}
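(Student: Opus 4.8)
The plan is to deduce the corollary from the analogous statement for ordinary BSDH varieties, \cite[Corollary 3.3]{LT}, by mimicking the proof of \cref{thm3.1} with ``very ample'', ``regular dominant'' and ``$>0$'' replaced throughout by ``globally generated'', ``dominant'' and ``$\ge 0$''. The only general facts I use are that pull-backs, quotients, tensor products and non-negative tensor powers of globally generated sheaves are globally generated, and that the restriction of a globally generated sheaf to a closed subvariety is globally generated.

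For the ``if'' direction no induction is needed. For each $0\le j\le r-1$, recall from \cref{sec4} that $\mathcal{O}_{\widetilde{Z_{w[j]}}}(1)$ is the universal quotient bundle of the projective bundle $\mathbb{P}(\mathcal{O}_{\widetilde{Z_{w[j+1]}}}(V_{\alpha_{i_{r-j}}}))$, hence a quotient of the pull-back to $\widetilde{Z_{w[j]}}$ of $\mathcal{O}_{\widetilde{Z_{w[j+1]}}}(V_{\alpha_{i_{r-j}}})=\widetilde{\varphi_{w[j+1]}}^{*}\mathcal{L}(\varepsilon_{0}\boxtimes V_{\alpha_{i_{r-j}}})$. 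Since $\mathcal{L}(\varepsilon_{0}\boxtimes V_{\alpha_{i_{r-j}}})$ is a globally generated vector bundle on $G/B\times G/B$ (as already used in the proof of \cref{thm3.1}), so is its pull-back, and therefore so is the quotient $\mathcal{O}_{\widetilde{Z_{w[j]}}}(1)$ and its further pull-back $\widetilde{\pi_{w[j]}}^{*}\mathcal{O}_{\widetilde{Z_{w[j]}}}(1)$ to $\widetilde{Z_{w}}$. Likewise $\widetilde{\pi_{w[r]}}^{*}\mathcal{L}(\lambda)$ is globally generated when $\lambda$ is dominant, because $\mathcal{L}(\lambda)$ is globally generated on $G/B$. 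Hence, if $m_{1},\dots,m_{r}\ge 0$ and $\lambda$ is dominant, then $\mathcal{L}=\mathcal{O}_{\widetilde{Z_{w}}}(m_{1},\ldots,m_{r})\otimes\widetilde{\pi_{w[r]}}^{*}\mathcal{L}(\lambda)$ is a tensor product of globally generated line bundles, so it is globally generated.

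For the ``only if'' direction I reproduce the converse part of the proof of \cref{thm3.1}, changing only the final citation. Fix a simple root $\alpha\in S$ and consider, as there, the closed embedding $\iota\colon P_{\alpha}\times^{B}Z_{w}\hookrightarrow\widetilde{Z_{w}}$ together with the $\mathbb{P}^{1}$-fibration $\phi\colon P_{\alpha}\times^{B}Z_{w}\to P_{\alpha}/B$. If $\mathcal{L}$ is globally generated on $\widetilde{Z_{w}}$, then its restriction $\mathcal{L}|_{P_{\alpha}\times^{B}Z_{w}}$ is globally generated; by the same computation as in the proof of \cref{thm3.1} this restriction equals $\mathcal{O}_{Z_{w}}(m_{1},\ldots,m_{r})\otimes\phi^{*}\mathcal{O}_{P_{\alpha}/B}(\langle\lambda,\alpha^{\vee}\rangle)$, and under the identification $P_{\alpha}\times^{B}Z_{w}=Z_{w'}$ with $w'=(s_{\alpha},w)$ it is the line bundle on $Z_{w'}$ whose $\mathcal{O}(1)$-coordinates are $m_{1},\ldots,m_{r}$ and $\langle\lambda,\alpha^{\vee}\rangle$. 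Then \cite[Corollary 3.3]{LT} forces $m_{1},\ldots,m_{r}\ge 0$ and $\langle\lambda,\alpha^{\vee}\rangle\ge 0$. As $\alpha$ ranges over $S$, this shows $\lambda$ is dominant, completing the argument.

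I do not expect a genuine obstacle here, since the statement is a formal variant of \cref{thm3.1}. The two points requiring a little care are (a) passing from ``relatively globally generated'' to ``globally generated'' for the bundles $\mathcal{O}_{\widetilde{Z_{w[j]}}}(1)$, which rests on the global generation of the $\mathcal{L}(\varepsilon_{0}\boxtimes V_{\alpha_{i_{r-j}}})$ on $G/B\times G/B$, and (b) keeping track of the $\mathcal{O}(1)$-coordinates of $\mathcal{L}|_{Z_{w'}}$ under the isomorphism $P_{\alpha}\times^{B}Z_{w}\cong Z_{w'}$ — but this is exactly the bookkeeping already carried out in the proof of \cref{thm3.1}.
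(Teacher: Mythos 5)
Your proposal is correct and takes essentially the same route as the paper: the paper's own proof of \cref{cor3.3} dismisses the ``if'' direction as clear (your tensor-product/quotient-of-pullback bookkeeping merely spells that out), and its ``only if'' direction is exactly your argument, restricting to $P_{\alpha}\times^{B}Z_{w}\cong Z_{(s_{\alpha},w)}$ for each $\alpha\in S$, computing $\iota^{*}\widetilde{\pi_{w[r]}}^{*}\mathcal{L}(\lambda)=\phi^{*}\mathcal{O}_{P_{\alpha}/B}(\langle\lambda,\alpha^{\vee}\rangle)$, and applying \cite[Corollary 3.3]{LT}.
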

		\begin{proof}
			Assume that $m_{1},\dots,m_{r}\ge 0$ and $\lambda$ is dominant. Then clearly $\mathcal{L}$ is a globally generated line bundle on $\widetilde{Z_{w}}.$
			
			Conversely, assume that $\mathcal{L}$ is a globally generated line bundle on $\widetilde{Z_{w}}.$  Fix an $\alpha\in S.$ Consider the following commutative diagram:
			
			$$\centerline{\xymatrixcolsep{5pc}\xymatrix{P_{\alpha}\times^{B}Z_{w}\ar@{^{(}->}[r]^{\iota}\ar[d]_{\phi}& \widetilde{Z_{w}}\ar[d]^{\widetilde{ \pi_{w[r]}}}\\
					P_{\alpha}/B	\ar@^{{(}->}[r]^{\iota}& G/B}}$$
			
			Then the restriction of $\mathcal{L}$ to $P_{\alpha}\times^{B}Z_{w}$ is also globally generated. Note that $\mathcal{L}|_{P_{\alpha}\times^{B}Z_{w}}=\mathcal{O}_{Z_{w}}(m_{1},\ldots,m_{r})\otimes\iota^{*}\widetilde{\pi_{w[r]}}^{*}\mathcal{L}(\lambda)$ and $\iota^{*}\widetilde{ \pi_{w[r]}}^{*}\mathcal{L}(\lambda)=\phi^{*}\mathcal{O}_{P_{\alpha}/B}(\langle \lambda,\alpha^{\vee} \rangle).$ Hence, by \cite[Corollary 3.3, p.465]{LT} it follows that $m_{1},\ldots, m_{r} \ge0$ and $\langle \lambda,\alpha^{\vee}\rangle\ge 0.$ Therefore, we have $m_{1},\ldots, m_{r}\ge 0$ and $\lambda$ is dominant. 
		\end{proof}

		\begin{proof}[Proof of \cref{thm3.5}]
			Assume that $\mathcal{L}$ is globally generated. Then by \cite[Example 1.4.5., p.52]{L} $\mathcal{L}$ is nef. 
			
			Conversely, suppose that $\mathcal{L}$ is nef. 
			By \cref{lem4.1} we have $$\mathcal{L}=\mathcal{O}_{\widetilde{Z_{w}}}(m_{1},\ldots,m_{r})\otimes \widetilde{\pi_{w[r]}}^{*}\mathcal{L}(\lambda)$$ for a unique $(m_{1},m_{2},\ldots,m_{r})\in \mathbb{Z}^{r}$ and a unique $\lambda\in X(B).$

			Fix an $\alpha\in S.$ Consider the following commutative diagram:
			
			$$\centerline{\xymatrixcolsep{5pc}\xymatrix{P_{\alpha}\times^{B}Z_{w}\ar@{^{(}->}[r]^{\iota}\ar[d]_{\phi}& \widetilde{Z_{w}}\ar[d]^{\widetilde{ \pi_{w[r]}}}\\
					P_{\alpha}/B	\ar@^{{(}->}[r]& G/B}}$$
			Then the restriction of $\mathcal{L}$ to $P_{\alpha}\times^{B}Z_{w}$ is also nef. Note that $\mathcal{L}|_{P_{\alpha}\times^{B}Z_{w}}=\mathcal{O}_{Z_{w}}(m_{1},\ldots,m_{r})\otimes \iota^{*} \widetilde{\pi_{w[r]}}^{*}\mathcal{L}(\lambda)|_{P_{\alpha}\times^{B} Z_{w}}$ is nef. Hence, by \cref{thm4.16} it follows that $\mathcal{L}|_{P_{\alpha}\times^{B}Z_{w}}$ is globally generated. Further, note that $ \iota^{*}\widetilde{\pi_{w[r]}}^{*}\mathcal{L}(\lambda)|_{P_{\alpha}\times^{B} Z_{w}}=\phi^{*} \mathcal{O}_{P_{\alpha}/B}(\langle \lambda,\alpha ^{\vee}\rangle).$ Hence,  by \cite[Corollary 3.3, p.465]{LT} it follows that  $m_1,\ldots ,m_{r}\ge 0,$ and $\langle \lambda,\alpha^{\vee}\rangle\ge 0.$ Therefore, by \cref{cor3.3} it follows that $\mathcal{L}$ is a globally generated line bundle on $\widetilde{Z_{w}}.$  
		\end{proof}
		
		\begin{corollary}
			If $\mathcal{L}$ is an ample line bundle on $\widetilde{Z_{w}},$ then all the higher cohomologies of $\mathcal{L}$ vanish, i.e., $H^{j}(\widetilde{Z_{w}},\mathcal{L})=0$ for $j\ge 1.$
		\end{corollary}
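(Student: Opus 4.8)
The plan is to derive the vanishing from the Frobenius splitting of $\widetilde{Z_{w}}$ proved by Mehta and Ramanathan (recalled in the introduction, see \cite{MR88}). Recall the standard mechanism: if $X$ is a Frobenius-split projective variety, with splitting $\phi\colon F_{*}\mathcal{O}_{X}\to\mathcal{O}_{X}$ of the absolute Frobenius $F$, then for any line bundle $\mathcal{L}$ on $X$ the projection formula gives $F_{*}\mathcal{O}_{X}\otimes\mathcal{L}\cong F_{*}(F^{*}\mathcal{L})\cong F_{*}(\mathcal{L}^{\otimes p})$, so tensoring $\phi$ by $\mathcal{L}$ exhibits $\mathcal{L}$ as a direct summand of $F_{*}(\mathcal{L}^{\otimes p})$; consequently $H^{i}(X,\mathcal{L})$ is a direct summand of $H^{i}(X,\mathcal{L}^{\otimes p})$, and, iterating, of $H^{i}(X,\mathcal{L}^{\otimes p^{n}})$ for every $n\ge 1$. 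Taking $X=\widetilde{Z_{w}}$ and $\mathcal{L}$ ample, Serre vanishing gives $H^{i}(\widetilde{Z_{w}},\mathcal{L}^{\otimes p^{n}})=0$ for $i>0$ and $n$ large, whence $H^{i}(\widetilde{Z_{w}},\mathcal{L})=0$ for all $i>0$. This disposes of the case $\mathrm{char}\,k=p>0$, and the case $\mathrm{char}\,k=0$ then follows by the usual spreading-out and upper-semicontinuity argument, realizing $\widetilde{Z_{w}}$ together with $\mathcal{L}$ over a finitely generated $\mathbb{Z}$-subalgebra and using that the vanishing holds on a dense set of closed fibres of positive characteristic.

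Alternatively, one can argue uniformly via the Leray spectral sequence of the $G$-equivariant fibration $\widetilde{\pi_{w[r]}}\colon\widetilde{Z_{w}}=G\times^{B}Z_{w}\to G/B$. By \cref{thm3.1} write $\mathcal{L}=\mathcal{O}_{\widetilde{Z_{w}}}(m_{1},\ldots,m_{r})\otimes\widetilde{\pi_{w[r]}}^{*}\mathcal{L}(\lambda)$ with all $m_{j}>0$ and $\lambda$ regular dominant. The restriction of $\mathcal{O}_{\widetilde{Z_{w}}}(m_{1},\ldots,m_{r})$ to a fibre is the ample line bundle $\mathcal{O}_{Z_{w}}(m_{1},\ldots,m_{r})$ on $Z_{w}$, whose higher cohomology vanishes by \cite{LLM} (or \cite{LT}); since $\widetilde{Z_{w}}$ is the bundle associated to $G\to G/B$ with fibre $Z_{w}$, this gives $R^{q}(\widetilde{\pi_{w[r]}})_{*}\mathcal{O}_{\widetilde{Z_{w}}}(m_{1},\ldots,m_{r})=0$ for $q>0$ and $(\widetilde{\pi_{w[r]}})_{*}\mathcal{O}_{\widetilde{Z_{w}}}(m_{1},\ldots,m_{r})=\mathcal{L}(M)$, the $G$-equivariant vector bundle attached to the $B$-module $M:=H^{0}(Z_{w},\mathcal{O}_{Z_{w}}(m_{1},\ldots,m_{r}))$. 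By the projection formula and the degeneration of the Leray spectral sequence, $H^{i}(\widetilde{Z_{w}},\mathcal{L})\cong H^{i}(G/B,\mathcal{L}(M)\otimes\mathcal{L}(\lambda))$, and the proof is finished by showing the latter vanishes for $i>0$: here one uses that $M$ carries a $B$-module filtration whose subquotients are Demazure modules (an excellent filtration, from standard monomial theory \cite{LLM}), that this property is preserved after tensoring with the dominant character $\lambda$, and that $B$-modules admitting such a filtration are $H^{>0}(G/B,-)$-acyclic, with Kempf vanishing as the base case.

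If one follows the first route there is essentially no obstacle: the content is entirely in the cited Frobenius splitting of $\widetilde{Z_{w}}$ and in Serre vanishing, and I expect this is the intended proof. In the second route the only delicate point is the $H^{>0}(G/B,-)$-acyclicity of $\mathcal{L}(M)$, i.e.\ the excellent-filtration input; the remaining ingredients --- vanishing along the $Z_{w}$-fibres, Leray degeneration, and Kempf vanishing --- are routine.
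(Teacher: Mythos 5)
Your first route is exactly the paper's argument: the paper cites the Frobenius splitting of $\widetilde{Z_{w}}$ (via \cite{Mag}) and then invokes the standard vanishing theorem for ample line bundles on Frobenius-split projective varieties, which is precisely the direct-summand-plus-Serre-vanishing mechanism you spell out. So the proposal is correct and takes essentially the same approach; the Leray/excellent-filtration alternative is not needed.
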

		\begin{proof}
			By \cite[Proposition 20, p.20]{Mag} it follows that $\widetilde{Z_{w}}$ is Frobenious split. Therefore, by \cite[Theorem 17, p.347]{Mag} proof of the corollary follows. 
		\end{proof}
		
		\begin{corollary}
			If $\mathcal{L}$ is a globally generated line bundle on $\widetilde{Z_{w}},$ then  all the higher cohomologies of $\mathcal{L}$ vanish, i.e., $H^j(\widetilde{Z_{w}},\mathcal{L})=0$ for $j\ge 1.$ 
		\end{corollary}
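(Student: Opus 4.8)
The plan is to reduce $H^{\bullet}(\widetilde{Z_{w}},\mathcal{L})$ to a cohomology computation on $G/B$ by pushing $\mathcal{L}$ forward along the fibration $\widetilde{\pi_{w[r]}}\colon\widetilde{Z_{w}}\to G/B$, whose fibres are copies of $Z_{w}$, and then to combine the vanishing theorem for globally generated line bundles on $Z_{w}$ (\cite{LLM}) with the homological properties of the space of sections regarded as a $B$-module over $G/B$.

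First I would use \cref{cor3.3} to put $\mathcal{L}$ in the normal form $\mathcal{L}=\mathcal{O}_{\widetilde{Z_{w}}}(m_{1},\ldots,m_{r})\otimes\widetilde{\pi_{w[r]}}^{*}\mathcal{L}(\lambda)$ with $m_{1},\ldots,m_{r}\ge 0$ and $\lambda$ dominant. On each fibre of $\widetilde{\pi_{w[r]}}$ the factor $\widetilde{\pi_{w[r]}}^{*}\mathcal{L}(\lambda)$ is trivial, so $\mathcal{L}$ restricts to the line bundle $\mathcal{O}_{Z_{w}}(m_{1},\ldots,m_{r})$ on $Z_{w}$, which is globally generated by \cite[Corollary 3.3]{LT} and whose higher cohomology therefore vanishes by \cite{LLM}. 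Since $\widetilde{\pi_{w[r]}}$ is a smooth projective morphism (a fibre bundle with fibre $Z_{w}$) and $\mathcal{L}$ is $G$-equivariant, cohomology and base change give $R^{i}(\widetilde{\pi_{w[r]}})_{*}\mathcal{L}=0$ for $i>0$ and exhibit $(\widetilde{\pi_{w[r]}})_{*}\mathcal{L}$ as a $G$-equivariant locally free sheaf on $G/B$, hence of the form $\mathcal{L}(M)$ for the $B$-module $M=H^{0}(Z_{w},\mathcal{O}_{Z_{w}}(m_{1},\ldots,m_{r}))\otimes L_{\lambda}$. The Leray spectral sequence for $\widetilde{\pi_{w[r]}}$ then yields $H^{j}(\widetilde{Z_{w}},\mathcal{L})\cong H^{j}(G/B,\mathcal{L}(M))$ for all $j$, so everything reduces to showing $H^{j}(G/B,\mathcal{L}(M))=0$ for $j>0$.

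For this last step I would use that the $B$-module $N:=H^{0}(Z_{w},\mathcal{O}_{Z_{w}}(m_{1},\ldots,m_{r}))$, as the space of sections of a globally generated line bundle on a BSDH-variety, carries an excellent filtration, i.e.\ a filtration by $B$-submodules whose subquotients are Demazure modules $V_{\tau}(\mu)$ with $\mu$ dominant. This can be seen by induction on $r$ from the decomposition $Z_{w}\cong P_{\alpha_{i_{1}}}\times^{B}Z_{(s_{i_{2}},\ldots,s_{i_{r}})}$: pushing the line bundle down to $P_{\alpha_{i_{1}}}/B$ realizes $N$ as a Demazure functor applied to the section module over $Z_{(s_{i_{2}},\ldots,s_{i_{r}})}$ (which has an excellent filtration by induction) twisted by a dominant character, and the Demazure functor preserves the class of $B$-modules with excellent filtration (cf.\ \cite{LLM}; this is part of the theory of excellent filtrations of $B$-modules). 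Since $\lambda$ is dominant and $\mathcal{L}(M)=\mathcal{L}(N)\otimes\mathcal{L}(\lambda)$, the cohomological property of excellent filtrations --- namely $H^{i}(G/B,\mathcal{L}(N')\otimes\mathcal{L}(\mu))=0$ for all $i>0$ and all dominant $\mu$ whenever $N'$ has an excellent filtration --- gives $H^{j}(G/B,\mathcal{L}(M))=0$ for $j>0$, completing the argument.

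Unlike the ample case treated in the preceding corollary, Frobenius splitting of $\widetilde{Z_{w}}$ on its own does not suffice here, since it yields higher-cohomology vanishing only for ample line bundles; this is why the fibration argument above is needed, and the hard part of it is the last module-theoretic input --- pinning down, with clean references, the excellent-filtration property of the section modules of globally generated line bundles on BSDH-varieties together with the resulting vanishing on $G/B$ (which itself ultimately rests on the Frobenius splitting of $G/B$ compatibly with the Schubert divisors). The remaining ingredients --- the normal form from \cref{cor3.3}, the base-change vanishing along $\widetilde{\pi_{w[r]}}$, and the Leray argument --- are routine. An alternative would be to descend $\mathcal{L}$ through the tower of $\mathbb{P}^{1}$-bundles $\widetilde{Z_{w}}\to\widetilde{Z_{w[1]}}\to\cdots\to G/B$ one factor at a time: the higher direct images vanish at every stage because a globally generated bundle restricts to a nonnegative sum of line bundles on each $\mathbb{P}^{1}$-fibre, but one must then control the positivity of the successive direct images, which is no simpler than the module-theoretic input above.
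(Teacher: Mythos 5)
Your reduction along $\widetilde{\pi_{w[r]}}$ is fine as far as it goes: writing $\mathcal{L}=\mathcal{O}_{\widetilde{Z_{w}}}(m_{1},\ldots,m_{r})\otimes\widetilde{\pi_{w[r]}}^{*}\mathcal{L}(\lambda)$ with $m_{i}\ge 0$ and $\lambda$ dominant (\cref{cor3.3}), and using the local triviality of $G\to G/B$, the projection formula and the vanishing theorem of \cite{LLM} on the fibre $Z_{w}$, one indeed gets $R^{i}(\widetilde{\pi_{w[r]}})_{*}\mathcal{L}=0$ for $i>0$ and $(\widetilde{\pi_{w[r]}})_{*}\mathcal{L}=\mathcal{L}(N\otimes L_{\lambda})$ with $N=H^{0}(Z_{w},\mathcal{O}_{Z_{w}}(m_{1},\ldots,m_{r}))$, so that everything hinges on $H^{j}(G/B,\mathcal{L}(N\otimes L_{\lambda}))=0$ for $j>0$. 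But that last statement carries the entire content of the corollary in your set-up, and you do not prove it: you outsource it to (a) the existence of an excellent filtration on $N$, (b) stability of excellence (or at least of the induced vanishing) under twisting by the dominant character $\lambda$, and (c) the vanishing of higher cohomology on $G/B$ for $B$-modules with excellent filtration. These are real theorems in the $B$-module literature (Joseph, Polo, van der Kallen, Mathieu; point (b) is essentially Polo's conjecture), but none of them is in \cite{LLM} in the form you cite, the dominant/antidominant conventions vary between sources and must be matched to this paper's choice of $B$, and you yourself concede you cannot pin the statements down. As written, then, the crucial step is asserted rather than proved, so there is a gap, even though the skeleton of the argument (normal form, base change, Leray, fibrewise LLM vanishing) is correct and the route could in principle be completed with the right references.

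By contrast, the paper's proof is a single citation: $\widetilde{Z_{w}}$ is Frobenius split (this was recorded, via \cite[Proposition 20]{Mag}, in the proof of the preceding corollary), and \cite[Lemma 23, p.350]{Mag} is a splitting-based vanishing statement that applies directly to globally generated line bundles on $\widetilde{Z_{w}}$, not merely to ample ones. Your parenthetical remark that Frobenius splitting ``yields higher-cohomology vanishing only for ample line bundles'' is what pushes you onto the long road: a bare splitting does only give the ample case, but the lemma the paper invokes is precisely the strengthening (a splitting argument adapted to semi-ample/globally generated bundles in this situation) that makes any fibration or filtration argument unnecessary. If you want to salvage your approach, either cite the excellent-filtration results precisely at steps (a)--(c), or simply replace the second half of your argument by the appeal to \cite[Lemma 23]{Mag} as the paper does.
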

		\begin{proof}
			By using \cite[Lemma 23, p.350]{Mag} proof of the corollary follows.
		\end{proof}

		\subsection{Big anti-canonical line bundle of $\widetilde{Z_{w}}$}
		Let $F$ be a smooth projective $B$-variety. Then consider the variety $X= G\times^{B} F$ and let $\pi: X\longrightarrow G/B$ be the natural projection map defined by $\pi([g,f])=gB$ for $g\in G$ and $f\in F.$ Let $D'$ be a $B$-invariant effective divisor on $F.$  Let $\widetilde{D'}:= G\times^{B} D'$ denote the $G$-invariant effective divisor on $X.$ 
		
		Let $p: G\longrightarrow G/B$ be the natural projection map.
		Let $\widetilde{D''}:=\pi^{-1}(\partial G/B)=p^{-1}(\partial G/B)\times^{B} F,$ where $\partial G/B$ denotes the boundary divisor of $G/B.$ Let $\widetilde{D}:= \widetilde{D'}+\widetilde{D''}.$ Note that $\widetilde{D}$ is an effective divisor on $X.$

		\begin{proof}[Proof of \cref{lem4.8}]
				
			 Note that the support $\text{Supp}(\widetilde{D})=\text{Supp}(\widetilde{D'})\bigcup \text{Supp}(\widetilde{D''}).$ Therefore, we have $X- \text{Supp}(\widetilde{D})=(G- p^{-1}(\partial G/B))\times^{B} (F- D' ).$ 
			 Consider the restriction of $\pi$ to $(G- p^{-1}(\partial G/B))\times^{B} (F- D').$ 
			 Now since both the varieties $G/B- \partial G/B$ and $F- D'$  are affine, the variety $X- \text{Supp}(\widetilde{D})$ is also affine. Therefore, by using \cref{lem3.3} it follows that $\widetilde{D}$ is big.
		\end{proof}
		
		\begin{rem}
			By \cref{lem3.3} it follows that the divisor $D'$ is big on $F.$
		\end{rem}
		\begin{rem}
			The divisor $\widetilde{D'}$ itself need not be big. For example, take $F=G/B,$ $D'=\partial G/B$ and the action of $B$ on $G/B$ is trivial. Then  $X=G/B\times G/B$ and the divisor   $\widetilde{D'}$ on $X$ becomes $G/B\times \partial G/B.$ The divisor $\widetilde{D'}$ on $G/B\times G/B$ is not big because any positive multiple of $\widetilde{D'}$ can not be the sum of an ample divisor and an effective divisor on $G/B\times G/B$ (cf. \cite[Corollary 2.2.7, p.141]{L}).	
		\end{rem}

		\begin{theorem}\label{thm4.8}
			The anti-canonical line bundle $\omega _{\widetilde{Z_{w}}}^{-1}$ of $\widetilde{Z_{w}}$ is big.
		\end{theorem}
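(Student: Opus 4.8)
The plan is to realize $\omega_{\widetilde{Z_w}}^{-1}$ as $\mathcal{O}_{\widetilde{Z_w}}(\widetilde{D})$ twisted by a globally generated line bundle, where $\widetilde{D}$ is precisely the divisor to which \cref{lem4.8} applies for the choice $F=Z_w$ and $D'=\partial Z_w=\sum_{j=1}^{r}Z_{w(j)}$, and then to quote that lemma (together with \cref{lem3.3}).

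First I would invoke the Mehta--Ramanathan description of the anti-canonical bundle \cite[Proposition 2]{MR88}. Combining it with \cref{prop4.15} (which gives $\omega_{Z_w}^{-1}\cong\mathcal{O}_{Z_w}(\partial Z_w)\otimes\mathcal{O}_{Z_w}(\rho)$ on the fibres), with $\omega_{G/B}^{-1}=\mathcal{L}(2\rho)$, and with the identity $\mathcal{O}_{G/B}(\partial G/B)=\mathcal{L}(\rho)$, one obtains an isomorphism $\omega_{\widetilde{Z_w}}^{-1}\cong\mathcal{O}_{\widetilde{Z_w}}(\widetilde{D})\otimes\mathcal{N}$, where $\widetilde{D}=\widetilde{D'}+\widetilde{D''}$ with $\widetilde{D'}=G\times^{B}\partial Z_w$ and $\widetilde{D''}=\widetilde{\pi_{w[r]}}^{-1}(\partial G/B)$, and $\mathcal{N}$ is the $\widetilde{\varphi_w}$-pullback of a globally generated line bundle on $G/B\times G/B$ (concretely of $\mathcal{L}(\rho\boxtimes\rho)$), hence globally generated.

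Next I would apply \cref{lem4.8} with $F=Z_w$ and $D'=\partial Z_w$: this $D'$ is $B$-invariant and effective, and by the computation in the proof of \cref{prop4.15} the complement $Z_w-\mathrm{Supp}(\partial Z_w)$ is isomorphic to $\prod_{j=1}^{r}U_{\alpha_{i_j}}\cong\mathbb{A}^{r}$, which is affine. Hence \cref{lem4.8} shows $\widetilde{D}$ is big, so $\mathcal{O}_{\widetilde{Z_w}}(\widetilde{D})$ is a big line bundle. Since $\mathcal{N}$ is globally generated it has a nonzero section, so $\mathcal{N}\cong\mathcal{O}_{\widetilde{Z_w}}(E)$ for some effective divisor $E$; thus $\omega_{\widetilde{Z_w}}^{-1}\cong\mathcal{O}_{\widetilde{Z_w}}(\widetilde{D}+E)$ is the sum of a big divisor and an effective divisor, and therefore big (tensoring by $\mathcal{O}_{\widetilde{Z_w}}(E)$ cannot decrease the dimension of any space of sections; see \cite{L}).

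The step that needs care — and the main obstacle — is extracting the Mehta--Ramanathan formula in exactly this normalized form and verifying that the leftover factor $\mathcal{N}$, after peeling off $\widetilde{D'}+\widetilde{D''}$, is genuinely globally generated (equivalently effective). This amounts to a bookkeeping computation with the relative canonical bundles of the tower of $\mathbb{P}^{1}$-fibrations $\widetilde{Z_w}\to\widetilde{Z_{w[1]}}\to\cdots\to\widetilde{Z_{w[r-1]}}\to G/B$, using $\omega_{\widetilde{Z_w}/G/B}^{-1}$ restricted to a fibre equals $\omega_{Z_w}^{-1}$ and $\omega_{G/B}^{-1}=\mathcal{L}(2\rho)$; all the needed input is already contained in \cite{MR88}, so no genuinely new computation is required.
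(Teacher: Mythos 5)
Your proposal is correct and follows essentially the same route as the paper: invoke the Mehta--Ramanathan formula, peel off the divisor $\widetilde{D}=G\times^{B}\partial Z_{w}+\widetilde{\pi_{w[r]}}^{-1}(\partial G/B)$, apply \cref{lem4.8} with $F=Z_{w}$ and $D'=\partial Z_{w}$ (whose complement is affine by the proof of \cref{prop4.15}), and note that the remaining factor is globally generated. The only slip is your parenthetical identification of $\mathcal{N}$: since $\widetilde{\varphi_{w}}^{*}\mathcal{L}(\rho\boxtimes\varepsilon_{0})=\widetilde{\pi_{w[r]}}^{*}\mathcal{L}(\rho)$ is exactly $\mathcal{O}_{\widetilde{Z_{w}}}(\widetilde{D''})$, the leftover factor is $\widetilde{\varphi_{w}}^{*}\mathcal{L}(\varepsilon_{0}\boxtimes\rho)$ rather than the pullback of $\mathcal{L}(\rho\boxtimes\rho)$ --- still globally generated, so your argument goes through unchanged and coincides with the paper's proof.
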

		\begin{proof}
			By \cite[Proposition 2, p.356]{MR88} we have $$\omega_{\widetilde{Z_{w}}}^{-1}=\mathcal{O}_{\widetilde{Z_{w}}}( \widetilde{\partial Z_{w}})\otimes \mathcal{O}_{\widetilde{Z_{w}}}(\rho\boxtimes \rho),$$  where $\widetilde{\partial Z_{w}}:=G\times^{B} \partial Z_{w}.$  Note that  $\mathcal{O}_{\widetilde{Z_{w}}}(\rho\boxtimes\rho)=\mathcal{O}_{\widetilde{Z_{w}}}(\rho\boxtimes \varepsilon_{0})\otimes \mathcal{O}_{\widetilde{Z_{w}}}(\varepsilon_{0}\boxtimes \rho).$
			Since $\pi_{*}\mathcal{O}_{\widetilde{Z_{w}}}=\mathcal{O}_{G/B},$ there is a natural $G$-equivariant isomorphism of $H^0(\widetilde{Z_{w}}, \mathcal{O}_{\widetilde{Z_{w}}}(\rho\boxtimes \varepsilon_{0}))$ and $H^0(G/B, \mathcal{L}(\rho)).$ Let $s$ be a unique (up to  non-zero constants) $B$-eigenvector  of $H^0(G/B,\mathcal{L}(\rho))$ such that ${\rm div}(s)=\partial G/B.$ By the proof of \cref{prop4.15}, the variety $Z_{w}- \partial Z_{w}$ is affine. Thus, by \cref{lem4.8} it follows that the effective divisor $\widetilde{\partial Z_{w}}+ p^{-1}(\partial G/B)\times^{B} Z_{w}$ is big on $\widetilde{Z_{w}}.$ Hence, the line bundle $\mathcal{O}_{\widetilde{Z_{w}}}( \widetilde{\partial Z_{w}})\otimes \mathcal{O}_{\widetilde{Z_{w}}}(\rho\boxtimes \varepsilon_{0})$ is big on $\widetilde{Z_{w}}.$ Since $\mathcal{O}_{\widetilde{Z_{w}}}( \varepsilon_{0}\boxtimes \rho)$ is pull back of the globally generated line bundle $\mathcal{L}(\varepsilon_{0}\boxtimes \rho)$ on $G/B \times G/B,$ it is also globally generated. Finally, since  $\omega_{\widetilde{Z_{w}}}^{-1}=\mathcal{O}_{\widetilde{Z_{w}}}( \widetilde{\partial Z_{w}})\otimes \mathcal{O}_{\widetilde{Z_{w}}}(\rho\boxtimes \varepsilon_{0})\otimes \mathcal{O}_{\widetilde{Z_{w}}}(\varepsilon_{0}\boxtimes \rho),$ it follows that  $\omega_{\widetilde{Z_{w}}}^{-1}$ is big.
		\end{proof}

		\begin{corollary}\label{cor4.10} The followings are equivalent.
			\begin{itemize}
				\item [(i)] $\widetilde{Z_{w}}$ is weak-Fano.
				
				\item [(ii)]  $\omega_{\widetilde{Z_{w}}}^{-1}$  is globally generated.
				
				\item [(iii)]  $\omega_{\widetilde{Z_{w}}}^{-1}$ is nef.
			\end{itemize}
			
		\end{corollary}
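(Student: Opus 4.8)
The plan is to mirror the proof of \cref{cor3.6}, since all the substantive input has already been established. The argument is a short cycle of implications, so I would first record which earlier results carry each arrow and then assemble them.

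First I would observe that $(i)\implies(iii)$ is immediate: by definition a weak-Fano variety has nef and big anti-canonical bundle, so in particular $\omega_{\widetilde{Z_w}}^{-1}$ is nef. Next, $(ii)\iff(iii)$ is a direct application of \cref{thm3.5} to the single line bundle $\mathcal{L}=\omega_{\widetilde{Z_w}}^{-1}$ on $\widetilde{Z_w}$: that theorem says a line bundle on $\widetilde{Z_w}$ is globally generated if and only if it is nef, which is exactly the equivalence of $(ii)$ and $(iii)$ for this particular bundle. (One could also note the easy direction $(ii)\implies(iii)$ holds for any line bundle by \cite[Example 1.4.5, p.52]{L}, but invoking \cref{thm3.5} gives both directions at once.)

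The only remaining implication is $(iii)\implies(i)$, and this is where \cref{thm4.8} does the work: that theorem asserts $\omega_{\widetilde{Z_w}}^{-1}$ is \emph{always} big, with no hypothesis on $w$. Hence if $(iii)$ holds, i.e. $\omega_{\widetilde{Z_w}}^{-1}$ is nef, then it is simultaneously nef and big, which is precisely the definition of $\widetilde{Z_w}$ being weak-Fano. This closes the cycle $(i)\implies(iii)\iff(ii)$ and $(iii)\implies(i)$, proving all three statements equivalent.

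I do not anticipate any genuine obstacle here; the difficulty has been front-loaded into \cref{thm3.5} (nef implies globally generated on $\widetilde{Z_w}$) and \cref{thm4.8} (bigness of the anti-canonical bundle, which in turn rests on \cref{lem4.8} and the Mehta--Ramanathan description of $\omega_{\widetilde{Z_w}}^{-1}$). The proof of the corollary itself is just bookkeeping of implications, so I would keep it to a few lines.
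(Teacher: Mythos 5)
Your argument is correct and is essentially identical to the paper's own proof: both deduce $(i)\implies(iii)$ from the definition of weak-Fano, get $(ii)\iff(iii)$ from \cref{thm3.5}, and close the cycle $(iii)\implies(i)$ using the bigness of $\omega_{\widetilde{Z_{w}}}^{-1}$ from \cref{thm4.8}. No further comment is needed.
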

		\begin{proof}
			Clearly, by using \cref{thm3.5} it follows that $(i)\implies(iii)\iff (ii).$ On the other hand, by using \cref{thm4.8} and $(iii),$ $(i)$ follows. 
		\end{proof}

		\section{(weak)-Fano $G$-BSDH-varieties}\label{sec5}
		In this section for an arbitrary sequence $w=(s_{i_{1}},s_{i_{2}},\ldots, s_{i_{r}})$ of simple reflections in $W$ we give necessary and sufficient conditions for $\widetilde{Z_{w}}$ to be (weak)-Fano variety.

		Let $\lambda\in X(B).$ By \cref{lem4.1} we have		$$\mathcal{O}_{\widetilde{Z_{w}}}(\lambda)=\mathcal{O}_{\widetilde{Z_{w}}}(m_{r1},m_{r2},\ldots, m_{rr})\otimes\widetilde{\pi_{w[r]}}^{*}\mathcal{L}(\lambda_{r})$$ for a unique tuple $(m_{r1},m_{r2},\ldots ,m_{rr})\in \mathbb{Z}^{r}$ and a unique $\lambda_{r}\in X(B),$ where $m_{rj}$ and $\lambda_{r}$ are given in following.
		
		\begin{lemma}
		
			\begin{itemize}
				\item[(i)] $m_{rr}=\langle\lambda,\alpha_{i_{r}}^{\vee} \rangle.$
				
				\item[(ii)]  $m_{rk}=\langle \lambda-\sum_{l=k+1}^{r}m_{rl}\varpi_{i_{l}},\alpha_{i_{k}}^{\vee}\rangle$ for all $1\le k\le r-1.$
				
				\item[(iii)] $\lambda_{r}=\lambda-\sum_{l=1}^{r}m_{rl}\varpi_{i_{l}}.$
			\end{itemize}
		\end{lemma}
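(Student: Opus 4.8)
The plan is to mirror the proof of the analogous statement for $\mathcal{O}_{Z_{w}}(\lambda)$ established earlier in \cref{sec3}, running the same induction on $r$ but now over the tower of $\mathbb{P}^{1}$-bundles $\widetilde{Z_{w}}\to\widetilde{Z_{w[1]}}\to\cdots\to\widetilde{Z_{w[r]}}=G/B$ and tracking the extra ${\rm Pic}(G/B)$-summand. For $r=0$ the variety $\widetilde{Z_{w}}$ is $G/B$, one has $\mathcal{O}_{\widetilde{Z_{w}}}(\lambda)=\widetilde{\pi_{w[r]}}^{*}\mathcal{L}(\lambda)$, and the assertion reduces to $\lambda_{r}=\lambda$. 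For $r\ge 1$ I would use the commutative fibre-product square relating $\widetilde{\pi_{w[1]}}\colon\widetilde{Z_{w}}\to\widetilde{Z_{w[1]}}$ to $\widetilde{\pi_{r}}\colon G/B\times G/B\to G/B\times G/P_{\alpha_{i_{r}}}$ displayed at the start of \cref{sec4}. Since $\widetilde{\pi_{w[1]}}$ is a $\mathbb{P}^{1}$-fibration on whose fibres $\mathcal{O}_{\widetilde{Z_{w}}}(1)=\mathcal{O}_{\widetilde{Z_{w}}}(\varpi_{i_{r}})$ has degree one, and since $\mathcal{O}_{\widetilde{Z_{w}}}(\lambda)=\widetilde{\varphi_{w}}^{*}\mathcal{L}(\varepsilon_{0}\boxtimes\lambda)$ has degree $\langle\lambda,\alpha_{i_{r}}^{\vee}\rangle$ along those fibres, one obtains, exactly as in the non-equivariant case,
\[
\mathcal{O}_{\widetilde{Z_{w}}}(\lambda)=\mathcal{O}_{\widetilde{Z_{w}}}(m'_{rr})\otimes\widetilde{\pi_{w[1]}}^{*}\mathcal{O}_{\widetilde{Z_{w[1]}}}(\lambda'),
\]
where $m'_{rr}=\langle\lambda,\alpha_{i_{r}}^{\vee}\rangle$ and $\lambda'=\lambda-m'_{rr}\varpi_{i_{r}}$; here one uses that $\langle\lambda',\alpha_{i_{r}}^{\vee}\rangle=0$, so that $\mathcal{L}(\varepsilon_{0}\boxtimes\lambda')$ descends along $\widetilde{\pi_{r}}$ to $G/B\times G/P_{\alpha_{i_{r}}}$, and hence by the square $\mathcal{O}_{\widetilde{Z_{w}}}(\lambda')$ is the $\widetilde{\pi_{w[1]}}$-pullback of $\mathcal{O}_{\widetilde{Z_{w[1]}}}(\lambda')$.

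Next I would apply the induction hypothesis to the sequence of length $r-1$ defining $Z_{w[1]}$. This writes
\[
\mathcal{O}_{\widetilde{Z_{w[1]}}}(\lambda')=\mathcal{O}_{\widetilde{Z_{w[1]}}}(m'_{r-1,1},\ldots,m'_{r-1,r-1})\otimes\widetilde{\pi_{w[r]}}^{*}\mathcal{L}(\lambda'_{r-1}),
\]
with $m'_{r-1,r-1}=\langle\lambda',\alpha_{i_{r-1}}^{\vee}\rangle$, $m'_{r-1,k}=\langle\lambda'-\sum_{l=k+1}^{r-1}m'_{r-1,l}\varpi_{i_{l}},\alpha_{i_{k}}^{\vee}\rangle$ for $1\le k\le r-2$, and $\lambda'_{r-1}=\lambda'-\sum_{l=1}^{r-1}m'_{r-1,l}\varpi_{i_{l}}$, where I use that the composite of $\widetilde{\pi_{w[1]}}$ with the structure morphism $\widetilde{Z_{w[1]}}\to G/B$ is $\widetilde{\pi_{w[r]}}$. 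Pulling back along $\widetilde{\pi_{w[1]}}$ and combining with the previous display yields
\[
\mathcal{O}_{\widetilde{Z_{w}}}(\lambda)=\mathcal{O}_{\widetilde{Z_{w}}}(m'_{r-1,1},\ldots,m'_{r-1,r-1},m'_{rr})\otimes\widetilde{\pi_{w[r]}}^{*}\mathcal{L}(\lambda'_{r-1}).
\]

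Finally, by \cref{lem4.1} the $\mathcal{O}(1)$-basis together with ${\rm Pic}(G/B)$ is a basis of ${\rm Pic}(\widetilde{Z_{w}})$, so the tuple and the character occurring in such a decomposition are unique; comparing with the definition of $m_{rj}$ and $\lambda_{r}$ gives $m_{rr}=m'_{rr}$, $m_{rk}=m'_{r-1,k}$ for $1\le k\le r-1$, and $\lambda_{r}=\lambda'_{r-1}$. Substituting $\lambda'=\lambda-m_{rr}\varpi_{i_{r}}$ into the recursion for the $m'_{r-1,k}$ and into the formula for $\lambda'_{r-1}$ extends the index ranges of the sums to $\sum_{l=k+1}^{r}$ and $\sum_{l=1}^{r}$ respectively, which is exactly (ii) and (iii); (i) is immediate. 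I do not anticipate any genuine obstacle: the only step requiring a little care is the identification $\mathcal{O}_{\widetilde{Z_{w}}}(\lambda')=\widetilde{\pi_{w[1]}}^{*}\mathcal{O}_{\widetilde{Z_{w[1]}}}(\lambda')$ when $\langle\lambda',\alpha_{i_{r}}^{\vee}\rangle=0$, which is where the fibre-product square and the descent of homogeneous line bundles from $G/B$ to $G/P_{\alpha_{i_{r}}}$ are used; everything after that is the same bookkeeping as in the non-equivariant lemma.
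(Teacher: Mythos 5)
Your proposal is correct and follows essentially the same route as the paper: the fibre-product square with $\widetilde{\pi_{r}}$, the $\mathbb{P}^{1}$-fibration decomposition $\mathcal{O}_{\widetilde{Z_{w}}}(\lambda)=\mathcal{O}_{\widetilde{Z_{w}}}(m'_{rr})\otimes\widetilde{\pi_{w[1]}}^{*}\mathcal{O}_{\widetilde{Z_{w[1]}}}(\lambda')$, induction on $r$, and uniqueness of coefficients via the $\mathcal{O}(1)$-basis of ${\rm Pic}(\widetilde{Z_{w}})$ from \cref{lem4.1}. Your explicit justification of the descent step when $\langle\lambda',\alpha_{i_{r}}^{\vee}\rangle=0$ is a welcome detail that the paper leaves implicit, but the argument is the same.
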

		\begin{proof}
			Consider the fibre product diagram
			
			$$\centerline{\xymatrixcolsep{5pc}\xymatrix{ \widetilde{Z_{w}}\ar[r]\ar[d]_{\widetilde{\pi_{w[1]}}}& G/B\times G/B\ar[d]^{\widetilde{\pi_{r}}}\\
					\widetilde{Z_{w[1]}}	\ar[r]& G/B\times G/P_{\alpha_{i_{r}}}}}.$$
			
			Since $\widetilde{\pi_{w[1]}}$ is a $\mathbb{P}^{1}$-fibration, by the above commutative diagram we have $$\mathcal{O}_{\widetilde{Z_{w}}}(\lambda)=\mathcal{O}_{\widetilde{Z_{w}}}(m'_{rr})\otimes \mathcal{O}_{\widetilde{Z_{w[1]}}}(\lambda'),$$ where $m'_{rr}=\langle \lambda,\alpha_{i_{r}}^{\vee}\rangle $ and  
			$\lambda'=\lambda-m_{rr}\varpi_{i_{r}}.$
			
			By induction we have $$\mathcal{O}_{\widetilde{Z_{w[1]}}}(\lambda')=\mathcal{O}_{\widetilde{Z_{w[1]}}}(m'_{r-11},m'_{r-12},\ldots,m'_{r-1r-1})\otimes\widetilde{\pi_{w[r-1]}}^{*}\mathcal{L}(\lambda'_{r-1}),$$ where $m_{r-1k}$'s and $\lambda'_{r-1}$ are given in the following:
			\begin{itemize}
				\item $m'_{r-1r-1}=\langle\lambda',\alpha_{i_{r-1}}^{\vee} \rangle$
				
				\item  $m'_{r-1k}=\langle \lambda'-\sum_{l=k+1}^{r-1}m'_{r-1l}\varpi_{i_{l}},\alpha_{i_{k}}^{\vee}\rangle$ for all $1\le k\le r-2$
				
				\item $\lambda'_{r-1}=\lambda'-\sum_{l=1}^{r-1}r'_{r-1l}\varpi_{i_{r}}.$
			\end{itemize}
			
			Hence, we have $\mathcal{O}_{\widetilde{Z_{w}}}(\lambda)=\mathcal{O}_{\widetilde{Z_{w}}}(m'_{r-11},m'_{r-12},\ldots,m'_{r-1r-1}, m'_{rr})\otimes\widetilde{\pi_{w[r]}}^{*}\mathcal{L}(\lambda'_{r-1}).$ Since $\{\mathcal{O}_{\widetilde{Z_{w[j]}}}(1): 1\le j\le r\}\bigcup\{\widetilde{\pi_{w[r]}}^{*}\mathcal{L}(\varpi_{i}): 1\le i\le n\}$ forms a basis of \text{Pic}($\widetilde{Z_{w}}$), we have 
			\begin{itemize}
				\item $m_{rr}=m'_{rr}=\langle \lambda,\alpha_{i_{r}}^{\vee}\rangle$
				
				\item $m_{rk}=m'_{r-1k}=\langle \lambda-\sum_{l=k+1}^{r}m_{rl}\varpi_{i_{l}},\alpha_{i_{k}}^{\vee}\rangle$ for all $1\le k\le r-1$
				
				\item $\lambda_{r}=\lambda'_{r-1}=\lambda-\sum_{l=1}^{r}m_{rl}\varpi_{i_{j}}.$
			\end{itemize}
			
		\end{proof}
		
		Fix an integers $1\le j\le r.$ By \cref{lem4.1} we have $$\mathcal{O}_{\widetilde{Z_{w[r-j]}}}(\lambda)=\mathcal{O}_{\widetilde{Z_{w[r-j]}}}(m_{j1},m_{j2},\ldots,m_{jj})\otimes\widetilde{\pi_{w[r-j][j]}}^{*}\mathcal{L}(\lambda_{j})$$ for a unique $(m_{j1},m_{j2},\ldots, m_{jj})\in \mathbb{Z}^{j}$ and a unique $\lambda_{j}\in X(B).$ 	
		\begin{corollary} Then we have
	
			\begin{itemize}
				\item[(i)] $m_{jj}=\langle\lambda,\alpha_{i_{j}}^{\vee}\rangle.$
				
				\item[(ii)]  $m_{jk}=\langle \lambda-\sum_{l=k+1}^{j}m_{jl}\varpi_{i_{l}},\alpha_{i_{k}}^{\vee}\rangle$ for all $1\le k\le j-1.$
				
				\item[(iii)] $\lambda_{j}=\lambda-\sum_{l=1}^{j}m_{jl}\varpi_{i_{l}}.$
			\end{itemize}
		\end{corollary}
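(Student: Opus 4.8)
The plan is to deduce this Corollary as a direct specialisation of the Lemma immediately preceding it. The essential observation is that the truncated sequence $w[r-j]=(s_{i_1},\ldots,s_{i_j})$ is again an arbitrary sequence of $j$ simple reflections, so every construction of \cref{sec4} applies to it unchanged: $\widetilde{Z_{w[r-j]}}$ is the $G$-BSDH variety attached to $w[r-j]$, it is built from $w[r-j]$ by $j$ successive $\mathbb{P}^1$-fibrations ending at $\widetilde{Z_{w[r-j][j]}}=G/B$, and it carries the $\mathcal{O}(1)$-basis $\{\mathcal{O}_{\widetilde{Z_{w[r-j][k]}}}(1):1\le k\le j\}\cup\{\widetilde{\pi_{w[r-j][j]}}^{*}\mathcal{L}(\varpi_i):1\le i\le n\}$. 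Hence \cref{lem4.1}, applied to $w[r-j]$ in place of $w$, already produces the decomposition displayed in the statement, together with the uniqueness of $(m_{j1},\ldots,m_{jj})\in\mathbb{Z}^{j}$ and of $\lambda_j\in X(B)$.

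With that in place I would simply invoke the preceding Lemma with $w$ replaced by $w[r-j]$ (equivalently, with the length parameter $r$ replaced by $j$). Its proof is an induction on the number of $\mathbb{P}^1$-fibrations via the fibre-product square of $\widetilde{\pi_{w[r-j][1]}}\colon\widetilde{Z_{w[r-j]}}\to\widetilde{Z_{w[r-j][1]}}$ over $G/B\times G/P_{\alpha_{i_j}}$: since this map is a $\mathbb{P}^1$-fibration one reads off $m_{jj}=\langle\lambda,\alpha_{i_j}^{\vee}\rangle$ and $\mathcal{O}_{\widetilde{Z_{w[r-j]}}}(\lambda)=\mathcal{O}_{\widetilde{Z_{w[r-j]}}}(m_{jj})\otimes\mathcal{O}_{\widetilde{Z_{w[r-j][1]}}}(\lambda-m_{jj}\varpi_{i_j})$, and then one iterates on the shorter sequence $w[r-j][1]=(s_{i_1},\ldots,s_{i_{j-1}})$. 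Comparing coefficients against the $\mathcal{O}(1)$-basis yields the recursion (i) and (ii), while collecting the contributions lying in the pullback of $\mathrm{Pic}(G/B)$ gives $\lambda_j=\lambda-\sum_{l=1}^{j}m_{jl}\varpi_{i_l}$, which is (iii). Nothing in this induction uses any special feature of $w[r-j]$, so it transfers verbatim.

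I do not anticipate a genuine obstacle; the content is purely formal. The one point deserving attention is the bookkeeping of truncation indices: for the length-$j$ sequence $v:=w[r-j]$ one has $v[k]=(w[r-j])[k]=w[r-j+k]$ (a consequence of $(w[m])[k]=w[m+k]$), so in particular $v[j]=w[r]$ and $\widetilde{Z_{v[j]}}=G/B$, which is exactly the target of the map $\widetilde{\pi_{w[r-j][j]}}$ appearing in the statement, and the elements of the $\mathcal{O}(1)$-basis of $\widetilde{Z_{v}}$ are indexed compatibly with those of $\widetilde{Z_{w}}$. Once this identification is recorded, the three formulas (i)--(iii) are precisely the formulas of the preceding Lemma with $r$ replaced by $j$, and the Corollary follows at once.
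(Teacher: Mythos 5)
Your proposal is correct and coincides with the paper's (implicit) argument: the paper states this corollary without proof precisely because it is the preceding lemma applied verbatim to the truncated sequence $w[r-j]=(s_{i_1},\ldots,s_{i_j})$, whose tower of $\mathbb{P}^1$-fibrations ends at $\widetilde{Z_{w[r-j][j]}}=\widetilde{Z_{w[r]}}=G/B$. Your index bookkeeping $(w[m])[k]=w[m+k]$ is exactly the identification needed, so nothing is missing.
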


		Fix an integer $1\le j\le r.$ By \cref{lem4.1} we have $$\mathcal{O}_{\widetilde{Z_{w[r-j]}}}(\alpha_{i_{j}})=\mathcal{O}_{\widetilde{Z_{w[r-j]}}}(m_{j1},m_{j2},\ldots,m_{jj})\otimes\widetilde{\pi_{w[r-j][j]}}^{*}\mathcal{L}(\lambda_{j})$$ for a unique $(m_{j1},m_{j2},\ldots, m_{jj})\in \mathbb{Z}^{j}$ and a unique $\lambda_{j}\in X(B).$
		\begin{corollary}\label{cor4.2}
		Then we have
			\begin{itemize}
				\item[(i)] $m_{jj}=2.$
				
				\item [(ii)] $m_{jk}=\langle \alpha_{i_{j}}-\sum_{l=k+1}^{j}m_{jl}\varpi_{i_{l}},\alpha_{i_{k}}^{\vee}\rangle$ for all $1\le k\le j-1.$
				
				\item[(iii)] $\lambda_{j}=\alpha_{i_{j}}-\sum_{l=1}^{j}m_{jl}\varpi_{i_{j}}.$
			\end{itemize}
		\end{corollary}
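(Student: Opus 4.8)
The plan is to obtain \cref{cor4.2} as the special case $\lambda=\alpha_{i_{j}}$ of the immediately preceding corollary. First I would note that $\alpha_{i_{j}}$ is a root, hence lies in $X(T)=X(B)$ (here one uses that $G$ is semisimple simply connected, so that the root lattice is contained in the character lattice); thus $\mathcal{O}_{\widetilde{Z_{w[r-j]}}}(\alpha_{i_{j}})$ is a legitimate instance of $\mathcal{O}_{\widetilde{Z_{w[r-j]}}}(\lambda)$, and the decomposition asserted in the statement is precisely the one furnished by the preceding corollary for this choice of $\lambda$.

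With that in place, parts (i)--(iii) are read off by substitution. Part (i) of the preceding corollary gives $m_{jj}=\langle \alpha_{i_{j}},\alpha_{i_{j}}^{\vee}\rangle$, and by the definition of the co-root $\langle \alpha_{i_{j}},\alpha_{i_{j}}^{\vee}\rangle=\tfrac{2(\alpha_{i_{j}},\alpha_{i_{j}})}{(\alpha_{i_{j}},\alpha_{i_{j}})}=2$, which is (i); plugging $\lambda=\alpha_{i_{j}}$ into parts (ii) and (iii) of the preceding corollary yields (ii) and (iii) verbatim. I would also remark that the recursion in (ii) determines $m_{j,j-1},\ldots,m_{j1}$ successively, starting from $m_{jj}=2$, so the tuple $(m_{j1},\ldots,m_{jj})$ is well defined (as already observed after the statement of \cref{cor4.18}), and then $\lambda_{j}$ is pinned down by (iii).

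I do not anticipate any real obstacle; the only point that deserves a word is the membership $\alpha_{i_{j}}\in X(B)$ used above. Should one prefer an argument not quoting the preceding corollary, one can instead rerun the fibre-product induction directly: pulling back the universal quotient sequence along the $\mathbb{P}^{1}$-fibration $\widetilde{\pi_{w[r-j][1]}}$ splits off the leading coefficient $m_{jj}=\langle \alpha_{i_{j}},\alpha_{i_{j}}^{\vee}\rangle=2$, after which $\alpha_{i_{j}}$ is replaced by $\alpha_{i_{j}}-m_{jj}\varpi_{i_{j}}$ on $\widetilde{Z_{w[r-j+1]}}$ and one induces on $j$; this repeats, in specialized form, the computation already carried out in \cref{lem4.13} and in the two preceding corollaries.
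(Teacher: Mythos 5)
Your proposal is correct and coincides with the paper's own (implicit) argument: \cref{cor4.2} is just the specialization $\lambda=\alpha_{i_{j}}$ of the corollary immediately preceding it, with $m_{jj}=\langle\alpha_{i_{j}},\alpha_{i_{j}}^{\vee}\rangle=2$ and the remaining coefficients and $\lambda_{j}$ read off by substitution. (Note that your substitution produces $\varpi_{i_{l}}$ in part (iii), which is the intended statement; the printed $\varpi_{i_{j}}$ there is a typo.)
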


		\begin{lemma}\label{lem4.3}
		 $\omega_{\widetilde{Z_{w}}}^{-1}$ is isomorphic to
			$\mathcal{O}_{\widetilde{Z_{w}}}(\alpha_{i_{r}})\otimes \mathcal{O}_{\widetilde{Z_{w[1]}}}(\alpha_{i_{r-1}})\otimes\cdots \otimes\mathcal{O}_{\widetilde{Z_{w[r-1]}}}(\alpha_{i_{1}})\otimes \widetilde{\pi_{w[r]}}^{*}\mathcal{L}(2\rho).$
		\end{lemma}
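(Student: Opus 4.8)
The plan is to imitate the proof of \cref{lem4.13}, arguing by induction on the length $r$ of the sequence $w$ along the tower of $\mathbb{P}^{1}$-fibrations
$$\widetilde{Z_{w}}\longrightarrow\widetilde{Z_{w[1]}}\longrightarrow\cdots\longrightarrow \widetilde{Z_{w[r-1]}}\longrightarrow \widetilde{Z_{w[r]}}=G/B.$$
For the base case $r=0$ the variety $\widetilde{Z_{w}}$ is $G/B$, the morphism $\widetilde{\pi_{w[r]}}$ is the identity, the product of the factors $\mathcal{O}_{\widetilde{Z_{w[j]}}}(\alpha_{i_{j}})$ is empty, and the asserted formula is just the standard identity $\omega_{G/B}^{-1}=\mathcal{L}(2\rho)$.

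For the inductive step I would use the relative tangent bundle sequence of the $\mathbb{P}^{1}$-fibration $\widetilde{\pi_{w[1]}}\colon \widetilde{Z_{w}}\to \widetilde{Z_{w[1]}}$,
\begin{equation*}
0\longrightarrow \mathcal{R}_{\widetilde{\pi_{w[1]}}}\longrightarrow T_{\widetilde{Z_{w}}}\longrightarrow \widetilde{\pi_{w[1]}}^{*}T_{\widetilde{Z_{w[1]}}}\longrightarrow 0,
\end{equation*}
which, after taking top exterior powers, gives $\omega_{\widetilde{Z_{w}}}^{-1}=\mathcal{R}_{\widetilde{\pi_{w[1]}}}\otimes \widetilde{\pi_{w[1]}}^{*}\omega_{\widetilde{Z_{w[1]}}}^{-1}$. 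It then remains to identify $\mathcal{R}_{\widetilde{\pi_{w[1]}}}$ with $\mathcal{O}_{\widetilde{Z_{w}}}(\alpha_{i_{r}})$ and to invoke the induction hypothesis for $\widetilde{Z_{w[1]}}$. For the first part I would use the cartesian square that exhibits $\widetilde{\pi_{w[1]}}$ as the base change of the second projection $\widetilde{\pi_{r}}\colon G/B\times G/B\to G/B\times G/P_{\alpha_{i_{r}}}$ along $\widetilde{\varphi_{w[1]}}$ (cf.~the fibre product diagram in \cref{sec4}): since the square is cartesian and relative tangent bundles are stable under base change, $\mathcal{R}_{\widetilde{\pi_{w[1]}}}=\widetilde{\varphi_{w}}^{*}\mathcal{R}_{\widetilde{\pi_{r}}}$, and $\mathcal{R}_{\widetilde{\pi_{r}}}$ is the pullback under the second projection of the relative tangent bundle of $G/B\to G/P_{\alpha_{i_{r}}}$, that is, $\mathcal{L}(\varepsilon_{0}\boxtimes \alpha_{i_{r}})$; by the definition of $\mathcal{O}_{\widetilde{Z_{w}}}(-)$ this is precisely $\mathcal{O}_{\widetilde{Z_{w}}}(\alpha_{i_{r}})$. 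For the second part, applying the induction hypothesis to $w[1]$ (which has length $r-1$), using the identity $w[1][j]=w[j+1]$ of truncated sequences and the functoriality $\widetilde{\pi_{w[1]}}^{*}\circ\widetilde{\pi_{w[1][j]}}^{*}=\widetilde{\pi_{w[j+1]}}^{*}$ of the morphisms in the tower, one obtains
\begin{equation*}
\widetilde{\pi_{w[1]}}^{*}\omega_{\widetilde{Z_{w[1]}}}^{-1}=\mathcal{O}_{\widetilde{Z_{w[1]}}}(\alpha_{i_{r-1}})\otimes \mathcal{O}_{\widetilde{Z_{w[2]}}}(\alpha_{i_{r-2}})\otimes\cdots \otimes\mathcal{O}_{\widetilde{Z_{w[r-1]}}}(\alpha_{i_{1}})\otimes \widetilde{\pi_{w[r]}}^{*}\mathcal{L}(2\rho),
\end{equation*}
and tensoring with $\mathcal{R}_{\widetilde{\pi_{w[1]}}}=\mathcal{O}_{\widetilde{Z_{w}}}(\alpha_{i_{r}})$ gives the claim.

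I expect the only genuinely delicate point to be the identification $\mathcal{R}_{\widetilde{\pi_{w[1]}}}=\mathcal{O}_{\widetilde{Z_{w}}}(\alpha_{i_{r}})$: one must check that the relevant square is honestly cartesian and that the $\varepsilon_{0}\boxtimes(-)$ convention built into the definition of $\mathcal{O}_{\widetilde{Z_{w}}}(-)$ is compatible with the second-factor projection $\widetilde{\pi_{r}}$. The remaining determinant computation and the bookkeeping of truncation indices along the tower are routine and run exactly parallel to \cref{lem4.13}.
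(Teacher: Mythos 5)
Your proposal is correct and follows essentially the same route as the paper: the relative tangent bundle exact sequence for the $\mathbb{P}^{1}$-fibration $\widetilde{\pi_{w[1]}}$, the identification $\mathcal{R}_{\widetilde{\pi_{w[1]}}}=\widetilde{\varphi_{w}}^{*}\mathcal{L}(\varepsilon_{0}\boxtimes\alpha_{i_{r}})=\mathcal{O}_{\widetilde{Z_{w}}}(\alpha_{i_{r}})$ via the cartesian square over $G/B\times G/P_{\alpha_{i_{r}}}$, and induction along the tower with base case $\omega_{G/B}^{-1}=\mathcal{L}(2\rho)$. The only difference is that you make the base case and the index bookkeeping $w[1][j]=w[j+1]$ explicit, which the paper leaves implicit.
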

		\begin{proof}
			Consider the fibre product diagram
			
			$\centerline{\xymatrixcolsep{5pc}\xymatrix{ \widetilde{Z_{w}}\ar[r]\ar[d]_{\widetilde{\pi_{w[1]}}}& G/B\times G/B\ar[d]^{\widetilde{\pi_{r}}}\\
					\widetilde{Z_{w[r-1]}}	\ar[r]& G/B\times G/P_{\alpha_{i_{r}}}}}.$
			
			Then we have the following short exact sequence  
			\begin{equation}\label{eq4.1}
				0\longrightarrow \mathcal{R}_{\widetilde{\pi{w[1]}}}\longrightarrow T_{\widetilde{Z_{w}}}\longrightarrow \widetilde{\pi_{w[1]}}^{*} T_{\widetilde{Z_{w[1]}}}\longrightarrow 0,
			\end{equation}
		of tangent bundles on $\widetilde{Z_{w}},$ where $T_{\widetilde{Z_{w}}}$ (respectively, $T_{\widetilde{Z_{w[1]}}}$) denotes the tangent bundle of $\widetilde{Z_{w}}$ (respectively, of $\widetilde{Z_{w[1]}}$) and $\mathcal{R}_{\widetilde{ \pi_{w[1]}}}$ denotes the relative tangent bundle with respect to the map $\widetilde{\pi_{w[1]}}.$ Hence, by \eqref{eq4.1} we have $\omega_{\widetilde{Z_{w}}}^{-1}=\mathcal{R}_{\widetilde{ \pi_{w[1]}}}\otimes \widetilde{\pi_{w[1]}}^{*}K_{\widetilde{Z_{w[1]}}}^{-1}.$
		Since $\mathcal{L}(\varepsilon_{0}\boxtimes \alpha_{i_{r}})$ is the relative tangent bundle on $G/B\times G/B$  with respect to $\widetilde{\pi_{r}},$ the pull back line bundle $\mathcal{O}_{\widetilde{Z_{w}}}(\alpha_{i_{r}})$ is equal to $\mathcal{R}_{\widetilde{ \pi_{w[1]}}}.$
			
			By induction we have $$\omega_{\widetilde{Z_{w[1]}}}^{-1}=\mathcal{O}_{\widetilde{Z_{w[1]}}}(\alpha_{i_{r-1}})\otimes \mathcal{O}_{\widetilde{Z_{w[2]}}}(\alpha_{i_{r-2}})\otimes\cdots\otimes \mathcal{O}_{\widetilde{Z_{w[r-2]}}}(\alpha_{i_{1}})\otimes \widetilde{\pi_{w[r-1]}}^{*}\mathcal{L}(2\rho).$$ Therefore, we have $$\omega_{\widetilde{Z_{w}}}^{-1}=\mathcal{O}_{\widetilde{Z_{w}}}(\alpha_{i_{r}})\otimes \mathcal{O}_{\widetilde{Z_{w[1]}}}(\alpha_{i_{r-1}})\otimes \mathcal{O}_{\widetilde{Z_{w[2]}}}(\alpha_{i_{r-2}})\otimes\cdots\otimes \mathcal{O}_{\widetilde{Z_{w[r-1]}}}(\alpha_{i_{1}})\otimes \widetilde{\pi_{w[r]}}^{*}\mathcal{L}(2\rho).$$
			
		\end{proof}
 By \cref{lem4.1} we have $$\omega_{\widetilde{Z_{w}}}^{-1}=\mathcal{O}_{\widetilde{Z_{w}}}(n_{1},n_{2},\ldots,n_{r})\otimes \widetilde{\pi_{w[r]}}^{*}\mathcal{L}(2\rho+\mu)$$ for a unique tuple $(n_{1},\ldots, n_{r})\in \mathbb{Z}^{r}$ and a unique $\mu\in X(B).$ 
		\begin{proposition}\label{prop4.5}
		 Then we have
			\begin{itemize}
				\item[(i)] $n_{r}=2.$
				
				\item [(ii)] $n_{j}=\sum_{l=j}^{r}m_{lj}$ for all $1\le j\le r-1.$
				
				\item [(iii)] $\mu=\sum_{j=1}^{r}\lambda_{j}.$
			\end{itemize} 
		\end{proposition}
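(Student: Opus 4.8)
The plan is to mirror the proof of \cref{prop4.14}, now carrying along the extra $\widetilde{\pi_{w[r]}}^{*}\mathcal{L}(\cdot)$ factors that appear in the $G$-equivariant setting. First I would start from \cref{lem4.3}, which gives
$$\omega_{\widetilde{Z_{w}}}^{-1}=\mathcal{O}_{\widetilde{Z_{w}}}(\alpha_{i_{r}})\otimes \mathcal{O}_{\widetilde{Z_{w[1]}}}(\alpha_{i_{r-1}})\otimes\cdots \otimes\mathcal{O}_{\widetilde{Z_{w[r-1]}}}(\alpha_{i_{1}})\otimes \widetilde{\pi_{w[r]}}^{*}\mathcal{L}(2\rho),$$
where the $j$-th factor $\mathcal{O}_{\widetilde{Z_{w[r-j]}}}(\alpha_{i_{j}})$ is understood as its pullback to $\widetilde{Z_{w}}$ along $\widetilde{\pi_{w[r-j]}}$. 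Then I would substitute the expression from \cref{cor4.2}, namely
$$\mathcal{O}_{\widetilde{Z_{w[r-j]}}}(\alpha_{i_{j}})=\mathcal{O}_{\widetilde{Z_{w[r-j]}}}(m_{j1},\ldots,m_{jj})\otimes\widetilde{\pi_{w[r-j][j]}}^{*}\mathcal{L}(\lambda_{j}),$$
with $m_{jj}=2$ and $\lambda_{j}=\alpha_{i_{j}}-\sum_{l=1}^{j}m_{jl}\varpi_{i_{l}}$.

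Next I would pull everything back to $\widetilde{Z_{w}}$ and rewrite it in the $\mathcal{O}(1)$-basis of ${\rm Pic}(\widetilde{Z_{w}})$. Two compatibilities, both immediate from functoriality of pullback along composites of the $\mathbb{P}^{1}$-fibrations in the tower $\widetilde{Z_{w}}\to\cdots\to\widetilde{Z_{w[r]}}=G/B$, do the work: (a) via the truncation identity $w[r-j][k]=w[r-(j-k)]$, the $\mathcal{O}(1)$-data of $\widetilde{Z_{w[r-j]}}$ sits inside the first $j$ coordinates of the $\mathcal{O}(1)$-notation for $\widetilde{Z_{w}}$, so the pullback of $\mathcal{O}_{\widetilde{Z_{w[r-j]}}}(m_{j1},\ldots,m_{jj})$ is $\mathcal{O}_{\widetilde{Z_{w}}}(m_{j1},\ldots,m_{jj},0,\ldots,0)$; and (b) $\widetilde{\pi_{w[r-j]}}^{*}\circ\widetilde{\pi_{w[r-j][j]}}^{*}=\widetilde{\pi_{w[r]}}^{*}$, so each $\mathcal{L}(\lambda_{j})$ contributes $\widetilde{\pi_{w[r]}}^{*}\mathcal{L}(\lambda_{j})$. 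Collecting the $r$ terms, together with the $\widetilde{\pi_{w[r]}}^{*}\mathcal{L}(2\rho)$ already present, gives
$$\omega_{\widetilde{Z_{w}}}^{-1}=\mathcal{O}_{\widetilde{Z_{w}}}\Big(\sum_{l=1}^{r}m_{l1},\ldots,\sum_{l=j}^{r}m_{lj},\ldots,m_{rr}\Big)\otimes \widetilde{\pi_{w[r]}}^{*}\mathcal{L}\Big(2\rho+\sum_{j=1}^{r}\lambda_{j}\Big),$$
the $j$-th coordinate picking up $m_{lj}$ exactly from the indices $l$ with $j\le l\le r$.

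Finally, since $\{\mathcal{O}_{\widetilde{Z_{w[j]}}}(1):1\le j\le r\}\cup\{\widetilde{\pi_{w[r]}}^{*}\mathcal{L}(\varpi_{i}):1\le i\le n\}$ is a basis of ${\rm Pic}(\widetilde{Z_{w}})$, I would compare coefficients with the given expression $\omega_{\widetilde{Z_{w}}}^{-1}=\mathcal{O}_{\widetilde{Z_{w}}}(n_{1},\ldots,n_{r})\otimes\widetilde{\pi_{w[r]}}^{*}\mathcal{L}(2\rho+\mu)$ to conclude $n_{r}=m_{rr}=2$, $n_{j}=\sum_{l=j}^{r}m_{lj}$ for $1\le j\le r-1$, and $\mu=\sum_{j=1}^{r}\lambda_{j}$. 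The only delicate point is the index bookkeeping in step (a) — making sure the pulled-back $\mathcal{O}(1)$-data of each truncation $\widetilde{Z_{w[r-j]}}$ lands in the correct coordinates of $\widetilde{Z_{w}}$; once \cref{lem4.3} and \cref{cor4.2} are in hand the rest is formal and parallels \cref{prop4.14} verbatim, the new ingredient being only that the $\mathcal{L}(\lambda_{j})$'s combine additively under tensor product.
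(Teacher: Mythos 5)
Your proposal is correct and follows the paper's own argument essentially verbatim: substitute the expression from \cref{cor4.2} into the factorization of $\omega_{\widetilde{Z_{w}}}^{-1}$ given by \cref{lem4.3}, collect the $\mathcal{O}(1)$-coordinates and the characters $\lambda_{j}$ under pullback to $\widetilde{Z_{w}}$, and compare coefficients using that $\{\mathcal{O}_{\widetilde{Z_{w[j]}}}(1)\}\cup\{\widetilde{\pi_{w[r]}}^{*}\mathcal{L}(\varpi_{i})\}$ is a basis of ${\rm Pic}(\widetilde{Z_{w}})$. The extra remarks on the pullback compatibilities and index bookkeeping only make explicit what the paper leaves implicit.
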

		\begin{proof}
			By \cref{lem4.3} we have $$\omega_{\widetilde{Z_{w}}}^{-1}=\mathcal{O}_{\widetilde{Z_{w[r-1]}}}(\alpha_{i_{1}})\otimes \cdots \otimes\mathcal{O}_{\widetilde{Z_{w[1]}}}(\alpha_{i_{r-1}})\otimes \mathcal{O}_{\widetilde{Z_{w}}}(\alpha_{i_{r}})\otimes \widetilde{\pi_{w[r]}}^{*}\mathcal{L}(2\rho).$$ 
			
			For $1\le j\le r,$ by using \cref{cor4.2} we have $$\mathcal{O}_{\widetilde{Z_{w[r-j]}}}(\alpha_{i_{j}})=\mathcal{O}_{\widetilde{Z_{w[r-j]}}}(m_{j1},m_{j2},\ldots,m_{jj})\otimes \widetilde{\pi_{w[r-j][j]}}^{*}\mathcal{L}(\lambda_{j}),$$ where $m_{jj}=2,$ $m_{jk}=\langle \alpha_{i_{j}}-\sum_{l=k+1}^{j}m_{jl}\varpi_{i_{l}},\alpha_{i_{k}}^{\vee}\rangle$ for all $1\le k\le j-1$ and $\lambda_{j}=\alpha_{i_{j}}-\sum_{l=1}^{j}m_{jl}\varpi_{\alpha_{i_{j}}}.$ Therefore, we have  $$\omega_{\widetilde{Z_{w}}}^{-1}=\mathcal{O}_{\widetilde{Z_{w}}}(\sum_{l=1}^{r}m_{l1},\ldots,\sum_{l=j}^{r}m_{lj},\ldots,m_{rr})\otimes\widetilde{\pi_{w[r]}}^{*}\mathcal{L}(\sum_{j=1}^{r}\lambda_{j}).$$ Since $\{\mathcal{O}_{\widetilde{Z_{w[j]}}}(1): 1\le j\le r\}\cup\{\widetilde{\pi_{w[r]}}^{*}\mathcal{L}(\varpi_{j}): 1\le j\le n\}$ forms a basis of \text{Pic}($\widetilde{Z_{w}}$), we have  $n_{r}=m_{rr}=2,$ $n_{j}=\sum_{l=j}^{r}m_{lj}$ ($1\le j\le r-1$) and $\mu=\sum_{j=1}^{r}\lambda_{r}.$
			
		\end{proof}
		\begin{theorem}\label{thm5.6}
		$\omega_{\widetilde{Z_{w}}}^{-1}$ is globally generated (respectively, ample) if and only if 
			\begin{itemize}
				\item [(i)] $\sum_{l=j}^{r}m_{lj}\ge 0 ~(\text{respectively,} \sum_{l=j}^{r}m_{lj} >0)$ for all $1\le j\le r-1$ and
				
				\item [(ii)] $2\rho+\sum_{j=1}^{r}\lambda_{j}$ is dominant (respectively, regular dominant).
			\end{itemize} 
		\end{theorem}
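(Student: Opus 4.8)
The plan is to read off $\omega_{\widetilde{Z_{w}}}^{-1}$ in the $\mathcal{O}(1)$-basis and then feed the result into the numerical criteria for global generation and ampleness on $\widetilde{Z_{w}}$ that were established earlier. Concretely, I would start from \cref{prop4.5}, which gives
$$\omega_{\widetilde{Z_{w}}}^{-1}=\mathcal{O}_{\widetilde{Z_{w}}}(n_{1},\ldots,n_{r})\otimes\widetilde{\pi_{w[r]}}^{*}\mathcal{L}(2\rho+\mu)$$
with $n_{r}=2$, $\ n_{j}=\sum_{l=j}^{r}m_{lj}$ for $1\le j\le r-1$, and $\mu=\sum_{j=1}^{r}\lambda_{j}$. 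The point to record right away is that the fibre coordinate $n_{r}=2$ is strictly positive independently of $w$, so it never obstructs either of the two properties in question; the only data that matter are the coordinates $n_{1},\ldots,n_{r-1}$ and the $X(B)$-twist $2\rho+\mu$.

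For the global generation statement I would apply \cref{cor3.3}: a line bundle $\mathcal{O}_{\widetilde{Z_{w}}}(m_{1},\ldots,m_{r})\otimes\widetilde{\pi_{w[r]}}^{*}\mathcal{L}(\lambda)$ on $\widetilde{Z_{w}}$ is globally generated if and only if $m_{1},\ldots,m_{r}\ge 0$ and $\lambda$ is dominant. Substituting the expression above and using $n_{r}=2\ge 0$ automatically, this reduces to $n_{j}\ge 0$ for $1\le j\le r-1$, i.e.\ $\sum_{l=j}^{r}m_{lj}\ge 0$ for $1\le j\le r-1$, together with dominance of $2\rho+\mu=2\rho+\sum_{j=1}^{r}\lambda_{j}$ --- precisely conditions (i) and (ii) in their non-parenthetical form.

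For the ampleness statement I would combine \cref{thm3.1} --- $\mathcal{O}_{\widetilde{Z_{w}}}(m_{1},\ldots,m_{r})\otimes\widetilde{\pi_{w[r]}}^{*}\mathcal{L}(\lambda)$ is very ample iff $m_{1},\ldots,m_{r}>0$ and $\lambda$ is regular dominant --- with \cref{cor4.3}, which says that an ample line bundle on $\widetilde{Z_{w}}$ is automatically very ample, so the two notions coincide. Hence $\omega_{\widetilde{Z_{w}}}^{-1}$ is ample iff $n_{j}>0$ for all $1\le j\le r$ and $2\rho+\mu$ is regular dominant; since $n_{r}=2>0$ always holds, this says $\sum_{l=j}^{r}m_{lj}>0$ for $1\le j\le r-1$ and $2\rho+\sum_{j=1}^{r}\lambda_{j}$ regular dominant, which are the parenthetical assertions.

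I do not anticipate a genuine obstacle here, since all the substance has already been isolated into \cref{prop4.5}, \cref{cor3.3}, \cref{thm3.1} and \cref{cor4.3}; the proof is essentially an assembly of these. The only thing that needs a little care is the bookkeeping: keeping the index conventions straight between the tuple $(m_{1},\ldots,m_{r})$ appearing in the $\mathcal{O}(1)$-basis criteria and the recursively defined quantities $n_{j}$ and $\lambda_{j}$, and noting explicitly that the fibre coordinate $n_{r}=2$ drops out of both characterizations, so that the final conditions involve only $j\le r-1$ and the single twist $2\rho+\mu$.
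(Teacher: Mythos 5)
Your proposal is correct and follows essentially the same route as the paper, which proves the statement by combining \cref{prop4.5} with \cref{thm3.1} and \cref{cor3.3}. Your explicit invocation of \cref{cor4.3} to pass between ample and very ample is a sensible (and implicitly needed) addition to the paper's one-line argument, but it does not change the approach.
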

		\begin{proof}
			By using \cref{prop4.5}, \cref{thm3.1} and \cref{cor3.3} proof of the theorem follows.
		\end{proof}

		\begin{proof}[Proof of \cref{thm5.7}]
			By \cref{thm3.5} and \cref{thm4.8}   $\omega_{\widetilde{Z_{w}}}^{-1}$ is weak-Fano if and only if $\omega_{\widetilde{Z_{w}}}^{-1}$ is globally generated. Therefore, proof follows from \cref{thm5.6}. 
		\end{proof}
		
		\begin{rem}
		Let $P$ ($\supseteq B$) be a parabolic subgroup of $G.$ For a sequence $w$ of simple reflections in $W$, consider the variety $Y_{w, P}=P\times^{B} Z_{w}.$ In particular, for $P=B$ (respectively, $P=G$),  $Y_{w,P}$ becomes $Z_{w}$ (respectively, $\widetilde{Z_{w}}$). Then all the results proved in our article for $\widetilde{Z_{w}}$ can be carried over for $Y_{w,P}$ with some appropriate modifications.
		\end{rem}
		
		\begin{rem}
			Even if the anti-canonical line bundle of $Z_{w}$ is very ample (respectively, globally generated), the anti-canonical line bundle of $\widetilde{Z_{w}}$ may not be very ample (respectively, globally generated). The following example illustrates this fact. 
		\end{rem}

		\begin{exam}\label{exam5.9}
			If $w=(s_{\alpha})$ for some $\alpha\in S$, then $\widetilde{Z_{w}}=G\times^{B}P_{\alpha}/B.$ 
			Recall that we have the natural projection map $\widetilde{\pi_{w[1]}}:\widetilde{Z_{w}}\to G/B.$ Thus we have the following $B$-equivariant fibre product diagram:
			
			$$\centerline{\xymatrixcolsep{5pc}\xymatrix{ P_{\alpha}/B\ar[r]^{\varphi_{w}}\ar[d]_{\pi_{w[1]}}& G/B\ar[d]^{\pi_{\alpha}}\\
					B/B\ar@/^2pc/[u]^{\sigma_{w(1)}}	\ar[r]^{\varphi_{w[1]}}& G/P_{\alpha}}}$$
				where $\sigma_{w(1)}$ is the natural $B$-equivariant section of $\pi_{w[1]}.$
			
			This induces the $G$-equivariant fibre product diagram:
			
			$$\centerline{\xymatrixcolsep{5pc}\xymatrix{\widetilde{Z_{w}}\ar[r]^{\widetilde{\varphi_{w}}}\ar[d]_{\widetilde{\pi_{w[1]}}}& G/B\times G/B\ar[d]^{\widetilde{\pi_{\alpha}}}\\
					G\times^{B} B/B \ar[r]_{\widetilde{\varphi_{w[1]}}}\ar[ru]^{\Delta} \ar@/^3pc/[u]^{\widetilde{\sigma_{w(1)}}} & G/B \times G/P_{\alpha}}}$$
		where $\widetilde{\sigma_{w(1)}}$ is the natural $G$-equivariant section of $\widetilde{\pi_{w[1]}}.$ Note that there is a $G$-equivariant isomorphism between $G\times^{B} B/B$ and $G/B.$ We identify $G\times^{B} B/B$ with $G/B$ via this isomorphism.
		
   Consider the following short exact sequence 
			\begin{equation}\label{eq5.2}
				0\to \mathcal{R}_{\widetilde{ \pi_{w[1]}}}\to T_{\widetilde{Z_{w}}}\to \widetilde{\pi_{w[1]}}^{*}T_{G/B}\to 0
			\end{equation}
of tangent bundles on $\widetilde{Z_{w}},$ where $\mathcal{R}_{\widetilde{ \pi_{w[1]}}}$ is the relative tangent bundle with respect to the map $\widetilde{\pi_{w[1]}}.$ 

Since $\omega_{G/B}^{-1}=\mathcal{L}(2\rho),$ by using \eqref{eq5.2} we have $\omega_{\widetilde{Z_{w}}}^{-1}=\mathcal{R}_{\widetilde{ \pi_{w[1]}}}\otimes \widetilde{\pi_{w[1]}}^{*}\mathcal{L}(2\rho).$ 
Since $\widetilde{ \pi_{w[1]}}$ is $P_{\alpha}/B(\simeq \mathbb{P}^{1})$-fibration, we have $$\mathcal{R}_{\widetilde{ \pi_{w[1]}}}=\mathcal{O}_{\widetilde{Z_{w}}}(m)\otimes \widetilde{\pi_{w[1]}}^{*}\mathcal{L}(\mu)$$ for a unique $m\in \mathbb{Z}$ and a unique $\mu\in X(B),$ where $\mathcal{O}_{\widetilde{Z_{w}}}(1):=\mathcal{O}_{\widetilde{Z_{w}}}(\varpi_{\alpha}).$ 

Since the restriction of $\mathcal{R}_{\widetilde{ \pi_{w[1]}}}$ to the fibre over $B/B$ is equal to $\omega_{\mathbb{P}^{1}}^{-1},$ we have $m=2.$ By the above diagram we have $\widetilde{\varphi_{w}}\widetilde{\sigma_{w(1)}}=\Delta$ (but, note that $\Delta\widetilde{ \pi_{w[1]}}\neq \widetilde{\varphi_{w}}$). Thus, we have $\widetilde{\sigma_{w(1)}}^{*}\mathcal{R}_{\widetilde{ \pi_{w[1]}}}=\Delta^{*}(\mathcal{L}(\varepsilon_{0}\boxtimes \alpha))=\mathcal{L}(\alpha).$  Again, we have $\widetilde{\sigma_{w(1)}}^{*}\mathcal{O}_{\widetilde{Z_{w}}}(1)=\mathcal{L}(\varpi_{\alpha}).$ Therefore, we have $\mathcal{R}_{\widetilde{ \pi_{w[1]}}}=\mathcal{O}_{\widetilde{Z_{w}}}(2)\otimes \widetilde{\pi_{w[1]}}^{*}\mathcal{L}(\alpha-2\varpi_{\alpha}).$

Hence, by \cref{thm5.6} $\omega_{\widetilde{Z_{w}}}^{-1}$ is very ample (respectively, globally generated) if and only if $2\rho+\alpha-2\varpi_{\alpha}$ is regular dominant (respectively, dominant). 

If $G$ is simply-laced, then $2\rho+\alpha-2\varpi_{\alpha}$ is regular dominant, hence $\widetilde{Z_{w}}$ is Fano. On the other hand, if $G$ is not type $G_{2},$ then $2\rho+\alpha-2\varpi_{\alpha}$ is always dominant. So, $\omega_{\widetilde{Z_{w}}}^{-1}$ is always globally generated, hence weak-Fano (cf. \cref{cor4.10}).

When $G$ is of type $G_{2},$ let $\alpha_{1},\alpha_{2}$ be two simple roots of $G$ such that $\langle \alpha_{2},\alpha_{1}^{\vee} \rangle=-3.$ Now, if  $\alpha=\alpha_{1},$  then $2\rho+\alpha-2\varpi_{\alpha}$ is regular dominant, hence $\widetilde{Z_{w}}$ is Fano. On the other hand, if $\alpha=\alpha_{2},$ then $2\rho+\alpha-2\varpi_{\alpha}$ is not dominant, hence $\widetilde{Z_{w}}$ is not weak-Fano. 
\end{exam}

		\section{Equivariant vector bundle on $G$-BSDH-variety}\label{sec6}
	
	Let $w=(s_{i_{1}},s_{i_{2}},\ldots, s_{i_{r}})$ be a reduced sequence of simple reflections in $W.$  In this section we prove that for a $T$-equivariant vector bundle $\mathcal{E}$ on $\widetilde{Z_{w}},$ $\mathcal{E}$ is nef (respectively, ample) if and only if the restriction $\mathcal{E}|_{C}$ of $\mathcal{E}$ to every $T$-invariant curve $C$ on $\widetilde{Z_{w}}$ is nef (respectively, ample). Recall that by a curve on $\widetilde{Z_{w}}$ we mean a closed irreducible one dimensional subvariety of $\widetilde{Z_{w}}.$

	 For each $\alpha_{i},$ let $P_{S\setminus\{\alpha_{i}\}}$ denote the unique maximal parabolic subgroup of $G$ containing $B.$  By  \cite[Theorem 1(ii), p.608]{Mag1} we have $$Z_{w}=\overline{B\cdot z_{w}}\hookrightarrow \prod_{j=1}^{r}G/P_{S\setminus\{\alpha_{i_{j}}\}}$$ where $z_{w}= (s_{i_{1}}P_{S\setminus\{\alpha_{1}\}}, s_{i_{1}}s_{i_{2}}P_{S\setminus\{\alpha_{i_{2}}\}} , \ldots, s_{i_{1}}\cdots s_{i_{r}} P_{S\setminus\{\alpha_{i_{r}}\}})\in \prod_{j=1}^{r}G/P_{S\setminus\{\alpha_{i_{j}}\}}$ and $B$-acts diagonally on $\prod_{j=1}^{r}G/P_{S\setminus\{\alpha_{i_{j}}\}}.$ Then 
		\begin{lemma}\label{lem6.1}
			The restriction map $${\rm res}: {\rm Pic}(\prod_{j=1}^{r}G/P_{S\setminus\{\alpha_{i_{j}}\}}) \longrightarrow {\rm Pic}(Z_{w})$$ of Picard groups induced by the above embedding is an isomorphism.
		\end{lemma}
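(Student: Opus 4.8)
The plan is to verify that $\mathrm{res}$ carries a $\mathbb{Z}$-basis of $\mathrm{Pic}\big(\prod_{j=1}^{r}G/P_{S\setminus\{\alpha_{i_j}\}}\big)$ onto the $\mathcal{O}(1)$-basis $\{\mathcal{O}_{Z_{w}}(1),\mathcal{O}_{Z_{w[1]}}(1),\ldots,\mathcal{O}_{Z_{w[r-1]}}(1)\}$ of $\mathrm{Pic}(Z_{w})$; since a homomorphism of free abelian groups that sends a basis bijectively onto a basis is an isomorphism, this suffices.

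First I would recall that, as $G$ is semisimple and simply connected, for each $1\le j\le r$ the group $\mathrm{Pic}(G/P_{S\setminus\{\alpha_{i_j}\}})$ is infinite cyclic, generated by the line bundle $\mathcal{L}(\varpi_{i_j})$, and its pullback to $G/B$ under the natural projection is $\mathcal{L}(\varpi_{i_j})$. Each factor $G/P_{S\setminus\{\alpha_{i_j}\}}$ is a smooth rational projective variety with $H^{1}(\mathcal{O})=0$ and finitely generated torsion-free Picard group, so the K\"unneth-type splitting for Picard groups applies and yields $\mathrm{Pic}\big(\prod_{j=1}^{r}G/P_{S\setminus\{\alpha_{i_j}\}}\big)=\bigoplus_{j=1}^{r}\mathbb{Z}\,\mathrm{pr}_{j}^{*}\mathcal{L}(\varpi_{i_j})$, a free abelian group of rank $r$, where $\mathrm{pr}_{j}$ denotes the projection onto the $j$-th factor.

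Next I would compute $\mathrm{res}$ on these generators. Unwinding Magyar's realization, the composite of the embedding with $\mathrm{pr}_{j}$ is the morphism $Z_{w}\to G/P_{S\setminus\{\alpha_{i_j}\}}$ given by $[p_{1},\ldots,p_{r}]\mapsto p_{1}\cdots p_{j}\,P_{S\setminus\{\alpha_{i_j}\}}$, and since $w[r-j]=(s_{i_{1}},\ldots,s_{i_{j}})$ this factors as $Z_{w}\xrightarrow{\pi_{w[r-j]}}Z_{w[r-j]}\xrightarrow{\varphi_{w[r-j]}}G/B\to G/P_{S\setminus\{\alpha_{i_j}\}}$. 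Hence $\mathrm{res}(\mathrm{pr}_{j}^{*}\mathcal{L}(\varpi_{i_j}))=\pi_{w[r-j]}^{*}\varphi_{w[r-j]}^{*}\mathcal{L}(\varpi_{i_j})=\pi_{w[r-j]}^{*}\mathcal{O}_{Z_{w[r-j]}}(\varpi_{i_j})$. Because the last simple reflection of the truncation $w[r-j]$ is $s_{i_j}$, by definition $\mathcal{O}_{Z_{w[r-j]}}(\varpi_{i_j})=\mathcal{O}_{Z_{w[r-j]}}(1)$, so $\mathrm{res}(\mathrm{pr}_{j}^{*}\mathcal{L}(\varpi_{i_j}))=\pi_{w[r-j]}^{*}\mathcal{O}_{Z_{w[r-j]}}(1)$.

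As $j$ ranges over $1,\ldots,r$, the index $r-j$ ranges over $0,\ldots,r-1$, so the images above are precisely the members of the $\mathcal{O}(1)$-basis of $\mathrm{Pic}(Z_{w})$; therefore $\mathrm{res}$ sends a basis to a basis and is an isomorphism. The step that needs the most care is the bookkeeping identifying the two descriptions of $Z_{w}$ — Magyar's configuration-variety embedding into $\prod_{j}G/P_{S\setminus\{\alpha_{i_j}\}}$ versus the tower of $\mathbb{P}^{1}$-bundles used to define the $\mathcal{O}(1)$-basis — so that the factorization of $\mathrm{pr}_{j}\circ(\text{embedding})$ through $\varphi_{w[r-j]}$ is justified; once the definitions are written out, this is routine. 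It is also worth being explicit that the splitting of the Picard group of the product genuinely uses that the factors are rational with $H^{1}(\mathcal{O})=0$ and torsion-free Picard group.
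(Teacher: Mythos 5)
Your proof is correct: the paper itself disposes of this lemma with a citation to Lauritzen--Thomsen (Section 3.1), and your argument is essentially the content behind that citation --- identify $\mathrm{Pic}$ of the product as $\bigoplus_{j}\mathbb{Z}\,\mathrm{pr}_{j}^{*}\mathcal{L}(\varpi_{i_{j}})$ (using $H^{1}(G/P,\mathcal{O})=0$, exactly as the paper does later in its proof of \cref{lem6.2}) and check that restriction sends this basis to the $\mathcal{O}(1)$-basis via the factorization $\mathrm{pr}_{j}\circ(\text{embedding})=(\text{projection})\circ\varphi_{w[r-j]}\circ\pi_{w[r-j]}$. The only implicit hypothesis is that Magyar's identification $Z_{w}\cong\overline{B\cdot z_{w}}$ requires $w$ reduced, which is part of the standing assumption of this section, so no gap results.
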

		\begin{proof}
			See \cite[Section 3.1, p.464]{LT}.
		\end{proof}
	
		Since $Z_{w}$ embeds $B$-equivariantly inside $\prod_{j=1}^{r}G/P_{S\setminus\{\alpha_{i_{j}}\}},$ we have a $G$-equivariant embedding  $$\widetilde{Z_{w}}=G\times^{B}\overline{B\cdot z_{w}}\hookrightarrow G\times^{B} \prod_{j=1}^{r}G/P_{S\setminus\{\alpha_{i_{j}}\}}\xrightarrow{\sim} G/B\times \prod_{j=1}^{r}G/P_{S\setminus\{\alpha_{i_{j}}\}}$$ 
		where the last isomorphism is given by the map $$[g, (g_{1}P_{S\setminus\{\alpha_{i_{1}}\}},\ldots, g_{r}P_{S\setminus\{\alpha_{i_{r}}\}})]\mapsto (gB, (gg_{1}P_{S\setminus\{\alpha_{i_{1}}\}},\ldots, gg_{r}P_{S\setminus\{\alpha_{i_{r}}\}}))$$
		and the action of $G$ on $G/B\times \prod_{j=1}^{r}G/P_{S\setminus\{\alpha_{i_{j}}\}}$ is given by diagonal action. Then
		
		\begin{lemma}\label{lem6.2}
			The restriction map $${\rm res }: {\rm Pic}(G \times^{B} \prod_{j=1}^{r}G/P_{S\setminus\{\alpha_{i_{j}}\}}) \longrightarrow {\rm Pic}(\widetilde{Z_{w}})$$ of Picard groups induced by the above embedding is an isomorphism.
		\end{lemma}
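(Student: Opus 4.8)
The plan is to reduce the statement to \cref{lem6.1} by restricting both embeddings to the fibre $Z_{w}$ of the fibre bundle $\widetilde{\pi_{w[r]}}\colon\widetilde{Z_{w}}\to G/B$ over the base point $B/B$. Write $k\colon Z_{w}\hookrightarrow\widetilde{Z_{w}}$, $x\mapsto[e,x]$, for this fibre inclusion, and $j\colon\prod_{j=1}^{r}G/P_{S\setminus\{\alpha_{i_{j}}\}}\hookrightarrow G/B\times\prod_{j=1}^{r}G/P_{S\setminus\{\alpha_{i_{j}}\}}$, $x\mapsto(B/B,x)$, for the inclusion of the corresponding slice. Since the embedding of $\widetilde{Z_{w}}$ sends $[g,x]$ to $(gB,g\cdot x)$, it restricts over $B/B$ to the embedding $Z_{w}\hookrightarrow\prod_{j}G/P_{S\setminus\{\alpha_{i_{j}}\}}$ of \cref{lem6.1} followed by $j$; thus $k$, $j$ and the two embeddings form a commutative square, and applying ${\rm Pic}(-)$ yields a commutative square whose lower horizontal arrow is the isomorphism ${\rm res}_{1}$ of \cref{lem6.1}, whose upper horizontal arrow is the map ${\rm res}$ we must analyze, and whose vertical arrows are the restriction maps $k^{*}$ and $j^{*}$.

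Next I would pin down the two vertical maps. Let $p_{0}$ and $p$ be the projections of $G/B\times\prod_{j}G/P_{S\setminus\{\alpha_{i_{j}}\}}$ onto $G/B$ and onto $\prod_{j}G/P_{S\setminus\{\alpha_{i_{j}}\}}$. As the Picard group of a product of smooth projective rational varieties is the direct sum of those of the factors, ${\rm Pic}(G/B\times\prod_{j}G/P_{S\setminus\{\alpha_{i_{j}}\}})=p_{0}^{*}{\rm Pic}(G/B)\oplus p^{*}{\rm Pic}(\prod_{j}G/P_{S\setminus\{\alpha_{i_{j}}\}})$, so $j^{*}$ is a split surjection ($j^{*}p^{*}={\rm id}$) with kernel $p_{0}^{*}{\rm Pic}(G/B)$. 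For $k^{*}$ I would use the $\mathcal{O}(1)$-basis $\{\mathcal{O}_{\widetilde{Z_{w[l]}}}(1)\}_{l=1}^{r}\cup\{\widetilde{\pi_{w[r]}}^{*}\mathcal{L}(\varpi_{i})\}_{i=1}^{n}$ of ${\rm Pic}(\widetilde{Z_{w}})$ from \cref{sec4}: because $\widetilde{\pi_{w[r]}}\circ k$ is the constant map to $B/B$, $k^{*}$ annihilates each $\widetilde{\pi_{w[r]}}^{*}\mathcal{L}(\varpi_{i})$, while comparing the parallel constructions $\mathcal{O}_{\widetilde{Z_{w}}}(1)=\widetilde{\varphi_{w}}^{*}\mathcal{L}(\varepsilon_{0}\boxtimes\varpi_{i_{r}})$ and $\mathcal{O}_{Z_{w}}(1)=\varphi_{w}^{*}\mathcal{L}(\varpi_{i_{r}})$ (and similarly for the truncations $w[l]$) gives $k^{*}\mathcal{O}_{\widetilde{Z_{w[l]}}}(1)=\mathcal{O}_{Z_{w[l]}}(1)$. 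Since $\{\mathcal{O}_{Z_{w[l]}}(1)\}_{l=1}^{r}$ is a basis of ${\rm Pic}(Z_{w})$, this shows $k^{*}$ is surjective with kernel exactly $\widetilde{\pi_{w[r]}}^{*}{\rm Pic}(G/B)={\rm res}(p_{0}^{*}{\rm Pic}(G/B))$.

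A diagram chase then finishes the proof. Given $\mathcal{N}\in{\rm Pic}(\widetilde{Z_{w}})$, pick $\mathcal{P}$ with ${\rm res}_{1}(\mathcal{P})=k^{*}\mathcal{N}$ and set $\widetilde{\mathcal{P}}=p^{*}\mathcal{P}$; then $k^{*}\mathcal{N}-k^{*}{\rm res}(\widetilde{\mathcal{P}})=k^{*}\mathcal{N}-{\rm res}_{1}(j^{*}\widetilde{\mathcal{P}})=k^{*}\mathcal{N}-{\rm res}_{1}(\mathcal{P})=0$, so $\mathcal{N}-{\rm res}(\widetilde{\mathcal{P}})\in\ker k^{*}={\rm res}(p_{0}^{*}{\rm Pic}(G/B))$, whence $\mathcal{N}$ lies in the image of ${\rm res}$; thus ${\rm res}$ is surjective. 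Since ${\rm Pic}(\widetilde{Z_{w}})$ and ${\rm Pic}(G/B\times\prod_{j}G/P_{S\setminus\{\alpha_{i_{j}}\}})$ are both free abelian of rank $n+r$, the surjection ${\rm res}$ is an isomorphism. (Alternatively, injectivity follows directly: if ${\rm res}(\mathcal{L})=0$ then ${\rm res}_{1}(j^{*}\mathcal{L})=0$, so $\mathcal{L}=p_{0}^{*}\mathcal{M}$ and $\widetilde{\pi_{w[r]}}^{*}\mathcal{M}=0$; since the point $[e,\dots,e]$ of $Z_{w}$ is $B$-fixed, $\widetilde{\pi_{w[r]}}$ admits a section, so $\widetilde{\pi_{w[r]}}^{*}$ is injective and $\mathcal{M}=0$.) I expect the only substantive point to be the computation of $k^{*}$ — that restriction to the fibre $Z_{w}$ sends the $\mathcal{O}(1)$-basis of ${\rm Pic}(\widetilde{Z_{w}})$ onto that of ${\rm Pic}(Z_{w})$ and annihilates exactly the classes pulled back from $G/B$; the splitting of the Picard group of a product of flag varieties is standard.
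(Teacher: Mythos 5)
Your argument is correct, and it establishes the lemma by a route that differs from the paper's at the key step. The paper's proof shares your outer skeleton (both sides are free abelian of rank $n+r$, so it suffices to prove surjectivity and count ranks): it computes ${\rm Pic}$ of the ambient variety via Kempf vanishing together with \cite[Chapter III, Exercise 12.6(b)]{Har}, gets the rank of ${\rm Pic}(\widetilde{Z_{w}})$ from \cref{lem4.1}, and then asserts surjectivity directly from the construction of the $\mathcal{O}(1)$-basis, i.e.\ from the fact that each basis element $\mathcal{O}_{\widetilde{Z_{w[l]}}}(1)=\widetilde{\pi_{w[l]}}^{*}\widetilde{\varphi_{w[l]}}^{*}\mathcal{L}(\varepsilon_{0}\boxtimes\varpi_{i_{r-l}})$, as well as each $\widetilde{\pi_{w[r]}}^{*}\mathcal{L}(\varpi_{i})$, is visibly the restriction of a line bundle on $G/B\times\prod_{j}G/P_{S\setminus\{\alpha_{i_{j}}\}}$. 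You instead obtain surjectivity by restricting to the fibre $Z_{w}$ over $B/B$, computing $k^{*}$ on the $\mathcal{O}(1)$-basis (which is the same comparison of the twisted and untwisted constructions that underlies the paper's one-line claim), and running a diagram chase through \cref{lem6.1} — a lemma the paper's proof of this statement does not use at all. Both arguments rest on the same structural facts; what yours buys is a more explicit verification of the surjectivity claim and, via the $B$-fixed point of $Z_{w}$ giving a section of $\widetilde{\pi_{w[r]}}$, a direct proof of injectivity that makes the final rank-counting step optional. All the individual steps check out: the square formed by $k$, $j$ and the two embeddings does commute, $k^{*}$ kills exactly $\widetilde{\pi_{w[r]}}^{*}{\rm Pic}(G/B)={\rm res}(p_{0}^{*}{\rm Pic}(G/B))$ and maps the remaining basis elements onto the $\mathcal{O}(1)$-basis of ${\rm Pic}(Z_{w})$, and the splitting of the Picard group of the product is the same fact the paper extracts from Kempf vanishing.
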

		\begin{proof}
		Using Kempf vanishing theorem it follows that $H^{1}(G/P,\mathcal{O}_{G/P})=0$ for any parabolic subgroup $P$ of $G.$ Thus, by \cite[Chapter III, Exercicise 12.6(b), p.292]{Har} we have ${\rm Pic}(G\times^{B} \prod_{j=1}^{r}G/P_{S\setminus\{\alpha_{i_{j}}\}})={\rm Pic}(G/B)\times \prod_{j=1}^{r} {\rm Pic}(G/P_{S\setminus\{\alpha_{i_{j}}\}}).$ Therefore, Picard group of $$G\times^{B} \prod_{j=1}^{r} {\rm Pic}(G/P_{S\setminus\{\alpha_{i_{j}}\}})$$ is a free abelian group of rank $n+r,$ where $n$ is the rank of $G.$

		Consider the following $G$-equivariant commutative triangle
		
	$\centerline{\xymatrix{
			\widetilde{Z_{w}} \ar[d] \ar@{^{(}->}[r]
			&  G\times^{B} \prod_{j=1}^{r}G/P_{S\setminus\{\alpha_{i_{j}}\}}\ar[dl]\\
			G/B&}}.$
		
 The way we have constructed the $\mathcal{O}(1)$-basis of ${\rm Pic}(\widetilde{Z_{w}})$ implies that the restriction map $${\rm res }: {\rm Pic}(G \times^{B} \prod_{j=1}^{r}G/P_{S\setminus\{\alpha_{i_{j}}\}}) \longrightarrow {\rm Pic}(\widetilde{Z_{w}})$$ of Picard groups induced by the above embedding is a surjective. Using \cref{lem4.1} it follows that ${\rm Pic}(\widetilde{Z_{w}})$ is a free abelian group of rank $n+r.$ Therefore, the restriction map $${\rm res }: {\rm Pic}(G \times^{B} \prod_{j=1}^{r}G/P_{S\setminus\{\alpha_{i_{j}}\}}) \longrightarrow {\rm Pic}(\widetilde{Z_{w}})$$ of Picard groups is an isomorphism.
		
		\end{proof}

		\begin{proof}[Proof of \cref{thm6.3}]
			If $\mathcal{E}$ is a $T$-equivariant nef (respectively, ample) vector bundle on $\widetilde{Z_{w}},$ then the restriction $\mathcal{E}|_{C}$ of $\mathcal{E}$ to every $T$-invariant curve $C$ of $\widetilde{Z_{w}}$ is nef (respectively, ample).
			
			Conversely, if $\mathcal{E}$ is a $T$-equivariant vector bundle on $\widetilde{Z_{w}}$ such that the restriction $\mathcal{E}|_{C}$ of $\mathcal{E}$ to every $T$-invariant closed curve $C$ on $\widetilde{Z_{w}}$ is nef, then by using the arguments similar to the proof of \cite[Theorem 3.1, p.6]{BHK} it follows that $\mathcal{E}$ is nef.
			
			Assume that $\mathcal{E}$ is a $T$-equivariant vector bundle on $\widetilde{Z_{w}}$ such that the restriction $\mathcal{E}|_{C}$ of $\mathcal{E}$ to every $T$-invariant curve $C$ on $\widetilde{Z_{w}}$ is ample.

			Now by \cref{lem6.2} it follows that the restriction map  $${\rm res }: {\rm Pic}(G/B\times\prod_{j=1}^{r}G/P_{S\setminus\{\alpha_{i_{j}}\}}) \longrightarrow {\rm Pic}(\widetilde{Z_{w}})$$ of Picard groups is an isomorphism.
			
			Then by using the argument similar to the argument used in the proof \cite[Theorem 3.1, p.6]{BHK} it follows that $\mathcal{E}$ is ample.  
			
		\end{proof}
\begin{rem}
Let $X$ be a projective $T$-variety. Using the arguments similar to the proof of \cite[Theorem 3.1, p.6]{BHK} it follows that  a $T$-equivariant vector bundle $\mathcal{E}$ on $X$ is nef  if and only if the restriction $\mathcal{E}|_{C}$ of $\mathcal{E}$ to every $T$-invariant  curve $C$ on $X$ is nef. 
\end{rem}

		\subsection*{Acknowledgement}
		The second named author would like to thank the
		Indian Institute of Technology Bombay for the postdoctoral fellowship and the hospitality during his stay.

	\end{document}